\documentclass[10pt, notitlepage]{article}
\usepackage{amssymb,amsmath,comment}
\usepackage{amsthm}
\usepackage[pagewise]{lineno}
\catcode`\@=11 \@addtoreset{equation}{section}
\def\thesection{\arabic{section}}

\def\theequation{\thesection.\arabic{equation}}
\catcode`\@=12
\usepackage{colortbl}
\usepackage{hyperref}
\usepackage[mathscr]{eucal}
\usepackage{epsf}
\usepackage{esint}
\usepackage{a4wide}

\newcommand{\e}{\varepsilon}

\newcommand{\Om} {\Omega}

\newcommand{\la} {\lambda}

\newcommand{\na} {\nabla}

\newcommand{\mb} {\mathbb}
\newcommand{\mc} {\mathcal}

\usepackage[all]{xy}
\catcode`\@=11
\def\theequation{\@arabic{\c@section}.\@arabic{\c@equation}}

\newtheorem{Theorem}{Theorem}[section]
\newtheorem{lemma}[Theorem]{Lemma}
\newtheorem{Proposition}[Theorem]{Proposition}

\newtheorem{Remark}[Theorem]{Remark}
\newtheorem{definition}[Theorem]{Definition}

\def\XXint#1#2#3{{\setbox0=\hbox{$#1{#2#3}{\int}$ }
		\vcenter{\hbox{$#2#3$ }}\kern-.6\wd0}}

\usepackage[all]{xy}
\catcode`\@=11
\def\theequation{\@arabic{\c@section}.\@arabic{\c@equation}}
\catcode`\@=12

\begin{document}

{\vspace{0.01in}
\title{Mixed local–nonlocal eigenvalue problems in the Heisenberg group: spectral theory and singular multiplicity}
		\author{Prashanta Garain\footnote{Department of Mathematical Sciences, Indian Institute of Science Education and Research Berhampur, Permanent Campus, At/Po:-Laudigam, Dist.-Ganjam, Odisha, India-760003.
					Email: {\tt pgarain92@gmail.com}\,} \,and Vicen\c{t}iu D. R\u{a}dulescu\footnote{AGH University of Krak\'ow, Faculty of Applied Mathematics,
al. Mickiewicza 30,
30-059 Krak\'ow,
Poland \& Brno University of Technology, Faculty of Electrical Engineering and Communication, Technick\'a 3058/10, Brno
61600, Czech Republic. Email: {\tt radulescu@agh.edu.pl}}}
	\date{}

\maketitle

\begin{abstract}\noindent
This paper investigates nonlinear eigenvalue problems governed by a family of operators that combine two distinct diffusion mechanisms: a classical local diffusion, which accounts for short-range interactions, and a fractional nonlocal diffusion, which captures long-range effects. The analysis is carried out in the Heisenberg group, a non-Euclidean geometric setting in which motion is constrained to a distinguished set of horizontal directions.
As an application of the obtained spectral theory, we establish multiplicity results for perturbed singular problems in both the purely nonlocal and mixed local--nonlocal settings. For the mixed case, a gradient convergence theorem plays a crucial role in deriving these multiplicity results. A key feature of our analysis is the treatment of variable singular exponents, allowing the singularity to vary throughout the domain. 
To the best of our knowledge, this is the first systematic study of nonlinear eigenvalue problems and singular problems with variable singularity exponents for mixed local–nonlocal operators in the Heisenberg group. The results therefore open a new direction in the spectral theory of subelliptic operators and provide a rigorous mathematical foundation for applications ranging from anomalous diffusion and nonlocal phase transitions to image analysis and control theory on nonholonomic systems.

\smallskip {\bf Keywords:} Heisenberg group; $(p,q)$-eigenvalue problems; nonlocal operators; mixed local–nonlocal equation; variable singular exponent.

\smallskip \textbf{2020 Mathematics Subject Classification:} 35J75, 35P30, 35R03, 35R11.
\end{abstract}

\maketitle

\medskip

\tableofcontents

\section{Introduction and main results}
\subsection{Introduction}
Eigenvalue problems for nonlinear differential operators constitute a central topic in the theory of partial differential equations and arise naturally in a wide variety of physical, biological and engineering applications; see, for instance, \cite{DrabekKufnerNicolosi, Lindpams}. They appear in the study of vibration frequencies of elastic bodies, stability of equilibrium states, reaction-diffusion systems, population dynamics, quantum mechanics and image processing; see \cite{DrabekKufnerNicolosi, Henrot}. In particular, the principal eigenvalue often determines critical thresholds, asymptotic behavior of evolution equations, stability properties of solutions and bifurcation phenomena; see \cite{BerestyckiNirenbergVaradhan, Lindpams}. Therefore, the existence and qualitative properties of eigenfunctions play a fundamental role in the analysis of nonlinear partial differential equations and nonlinear boundary value problems.

In recent years, considerable attention has been devoted to nonlinear equations involving nonlocal operators due to their applications in anomalous diffusion, L\'evy processes, phase transitions, finance, image processing and population dynamics; see \cite{CaffarelliSilvestre, DiNezzaPalatucciValdinoci}. Moreover, equations combining local and nonlocal diffusion have attracted increasing interest, as they describe the coexistence of short-range diffusion and long-range interactions in heterogeneous media; see \cite{BiagiDipierroValdinociVecchi2}. These developments naturally lead to the study of spectral properties of both purely nonlocal and mixed local-nonlocal operators.

Compared with the Euclidean framework, the theory of nonlinear eigenvalue problems in sub-Riemannian spaces is still relatively less developed. Among such spaces, the Heisenberg group $\mathbb H^n$, the prototypical example of a Carnot group, plays a distinguished role due to its rich geometric structure and numerous applications; see, for instance, \cite{BonfiglioliLanconelliUguzzoni, Cap}. Besides its fundamental role in harmonic analysis, geometric measure theory and several complex variables, the Heisenberg group provides a natural setting for subelliptic partial differential equations. Its geometry also appears in models involving nonholonomic constraints, such as robotics, motion planning, visual cortex modelling and geometric optics; see \cite{Montgomery}. In this framework, diffusion is constrained to horizontal directions, leading naturally to the horizontal gradient and the nonlinear sub-Laplacian, while their fractional counterparts describe long-range interactions compatible with the intrinsic sub-Riemannian geometry. Hence, eigenvalue problems associated with these operators provide the appropriate spectral framework for nonlinear diffusion processes in the Heisenberg group.

Motivated by these considerations, this paper is devoted to the study of nonlinear $(p,q)$-eigenvalue problems associated with purely nonlocal and mixed local-nonlocal operators in the Heisenberg group, together with their application to singular problems involving variable nonlinearities. More precisely, we first consider the following eigenvalue problem:
\begin{equation}\label{evp}
\mathcal{M}_\alpha\,u:=-\alpha\Delta_{H,p}u+(-\Delta_{H,p})^s u
=\lambda \|u\|_{L^q(\Omega)}^{p-q}|u|^{q-2}u
\quad\text{in }\Omega,\qquad
u=0\quad\text{in }\mathbb H^n\setminus\Omega .
\end{equation}
Here, $\Omega\subset\mathbb H^n$, $n\geq 2$ is a bounded smooth domain, $Q=2n+2$ denotes the homogeneous dimension of $\mathbb H^n$, $\alpha\in\{0,1\}$ and $s\in(0,1)$. The case $\alpha=0$ corresponds to the purely nonlocal fractional $p$-sub-Laplacian, whereas $\alpha=1$ corresponds to the mixed local-nonlocal operator. We assume $(p,q)\in I_\alpha$, where
\begin{equation}\label{Ialpha}
I_\alpha:=
\begin{cases}
\Big(1,\dfrac{Q}{s}\Big)\times(1,p_s^*),
& p_s^*=\dfrac{Qp}{Q-sp}, \quad \text{if }\alpha=0\\[2mm]
(1,Q)\times(1,p^*), 
& p^*=\dfrac{Qp}{Q-p}, \quad \text{if }\alpha=1.
\end{cases}
\end{equation}
The operator $\Delta_{H,p}$ is the $p$-sub-Laplacian defined by
\[
\Delta_{H,p}u
=\operatorname{div}\left(|\nabla_H\,u|^{p-2}\nabla_H\,u\right),
\]
and $(-\Delta_{H,p})^s$ denotes the fractional $p$-sub-Laplacian given by
\[
(-\Delta_{H,p})^su(x)
=\mathrm{P.V.}\int_{\mathbb H^n}
\frac{|u(x)-u(y)|^{p-2}(u(x)-u(y))}
{|y^{-1}\circ x|_{K}^{Q+ps}}\,dy,
\]
where P.V. denotes the principal value and $|\cdot|_K$ denotes the Kor\'anyi norm defined in \eqref{kn} below.

The spectral information obtained from \eqref{evp} is then used to study the parameter dependent perturbed singular problem
\begin{equation}\label{meqn}
\mathcal M_\alpha\,u
=\mu\,u^{-\gamma(x)}+u^\beta
\quad\text{in }\Omega,\qquad
u>0\quad\text{in }\Omega,\qquad
u=0\quad\text{on }\mathbb H^n\setminus\Omega ,
\end{equation}
where $\alpha\in\{0,1\}$, $\mu>0$ and $s\in(0,1)$. We assume that $(p,\beta)\in J_\alpha$, where
\begin{equation}\label{Jalpha}
J_\alpha:=
\begin{cases}
\Big(1,\dfrac{Q}{s}\Big)\times(p-1,p_{s}^*-1),
& p_{s}^*=\dfrac{Qp}{Q-sp}, \quad \text{if }\alpha=0,\\[2mm]
(1,Q)\times(p-1,p^{*}-1),
& p^{*}=\dfrac{Qp}{Q-p}, \quad \text{if }\alpha=1.
\end{cases}    
\end{equation}
Here the exponent $\gamma\in C(\overline{\Omega})$ is allowed to vary inside the domain $\Om$, which satisfies $0<\gamma(x)<1$ for every $x\in\overline{\Om}$. For $\alpha=0$ and $\alpha=1$, this respectively corresponds to the purely nonlocal and mixed local-nonlocal singular problems. We prove multiplicity results for weak solutions of \eqref{meqn}.

Before discussing our results, we briefly review the related literature. In the Euclidean setting, eigenvalue problems for nonlinear local operators have been extensively studied; see \cite{Garica, Lindpams} for the case $p=q$ and \cite{DrabekKufnerNicolosi, Ercole, Kawohl, Otani} for the more general $(p,q)$-setting. Eigenvalue problems for purely nonlocal fractional $p$-Laplacian type operators have also been widely investigated; see \cite{BRRbook, Brasco, FelmerQuaasTan, LindgrenLindqvist} for $p=q$ and \cite{Ercole} for the fractional $(p,q)$-case. More recently, mixed local-nonlocal $(p,q)$-eigenvalue problems have been studied in the Euclidean framework; see \cite{PPnon} for $p=q$ and $\alpha=1$ and \cite{Craig, Garainriv, Ahmad} for $p\neq q$ respectively.

In the Heisenberg group, the corresponding theory is still rather limited. Local $(p,q)$-eigenvalue problems have been studied in \cite{GUtraz}, while the purely nonlocal case for $p=q$ was considered in \cite{GKRmathann}. Mixed local-nonlocal eigenvalue problems have only been investigated in the linear case $p=2$; see \cite{Deb}. Thus, nonlinear purely nonlocal and mixed local-nonlocal $(p,q)$-eigenvalue problems in the Heisenberg group remain largely unexplored.

The main results of this paper are twofold. First, we establish the existence of nontrivial eigenfunctions and corresponding eigenvalues for problem \eqref{evp}. Our approach is based on an inverse iteration scheme inspired by the nonlinear inverse power method \cite{Ercole}. Furthermore, by combining recent regularity results for mixed local-nonlocal equations with Stampacchia's truncation method, we obtain boundedness and further regularity properties of the eigenfunctions.

Second, we apply this spectral framework to the singular problem \eqref{meqn}. Here, by singularity, we mean that the nonlinearity on the right-hand side of \eqref{meqn} becomes unbounded near the origin due to the presence of the singular exponent $\gamma$. A further feature of $\gamma$ is its variable nature within the domain. Singular problems have been extensively studied, and a substantial body of literature is available in the Euclidean setting for purely local, purely nonlocal, and mixed local--nonlocal problems, both for constant and variable singular exponents. More precisely, for the constant singular exponent case, we refer to \cite{Arcoya, Bocvpde, CRT, Radu, GST, Lz} for the purely local setting, \cite{CMS, Mukanona1} for the purely nonlocal setting, and \cite{AR, Gjga, GUnon, HH} for the mixed local--nonlocal setting. For variable singular exponents, we refer to \cite{Ajde, BGMade, CMP} for the purely local case, \cite{GMcpaa} for the purely nonlocal case, and \cite{Biroud, GKK} for the mixed local--nonlocal case.

In contrast, relatively little is known about singular problems in non-Euclidean settings, even in the Heisenberg group. In the purely local setting, we refer to \cite{GUaamp} and \cite{GK, Pucciacv, Sahu} for related results. For the purely nonlocal case, we refer to \cite{G1, GKRmathann} and in the mixed local--nonlocal setting, see \cite{G2, Gopus}.

Nevertheless, to the best of our knowledge, perturbed singular problems with variable exponents in non-Euclidean settings remain unexplored, even in the Heisenberg group, for both purely nonlocal and mixed local--nonlocal operators. We note that perturbed singular nonlinearities were considered in \cite{GKRmathann} in the purely nonlocal setting under the assumption that the singular exponent $\gamma\in(0,1)$ is constant. Their analysis relies on the Nehari manifold method. In contrast, in the present article, we allow the singular exponent $\gamma\in(0,1)$ to vary and establish multiplicity results for problem \eqref{meqn}. Our approach combines variational and approximation arguments, inspired by \cite{Arcoya}, and provides a unified framework for treating both the purely nonlocal and mixed local--nonlocal cases. In the mixed local--nonlocal setting, an additional gradient convergence result (see Theorem~\ref{gradcgt}) is required, which plays a crucial role in passing to the limit in the nonlinear local term.

\subsection*{Features of the paper}

This paper initiates the systematic study of nonlinear eigenvalue problems governed by mixed local-nonlocal operators in the Heisenberg group, a setting in which motion is constrained to horizontal directions and the geometry is genuinely non-Euclidean.

The analysis treats, within a single framework, both purely nonlocal fractional sub-Laplacians and operators that intertwine classical local diffusion with long-range nonlocal effects, thereby capturing the coexistence of short-range and long-range interactions in heterogeneous media.

In this paper, we develop a spectral theory for nonlinear eigenvalue problems in the Heisenberg group, establishing the existence of eigenfunctions, their boundedness, and their strict positivity. These results extend classical spectral theory beyond the Euclidean and Riemannian frameworks.

A distinctive feature of this paper is the treatment of singular nonlinearities whose exponent is allowed to vary throughout the domain, a far more delicate scenario than the constant-exponent case studied previously.
The spectral information obtained is then deployed to prove multiplicity results for perturbed singular problems, yielding at least two distinct positive weak solutions for small values of the perturbation parameter.
The proofs combine variational methods with approximation arguments, providing a unified treatment of both the purely nonlocal and the mixed local-nonlocal cases.

In the mixed setting, a gradient convergence theorem plays an essential role, enabling the passage to the limit in the nonlinear local term, a technical ingredient absent from the purely nonlocal analysis.

To the best of the authors' knowledge, this is the first systematic study of nonlinear eigenvalue problems and singular problems with variable singularity exponents for mixed local--nonlocal operators in the Heisenberg group, thereby opening a new direction in the spectral theory of subelliptic operators.

\subsection*{Notation and assumptions}

Throughout the paper, we use the following notation and standing assumptions, unless otherwise specified. We assume that
\[
\alpha \in \{0,1\}, \qquad 0<s<1<p<Q,
\]
where $Q=2n+2$ is the homogeneous dimension of the Heisenberg group $\mb{H}^n$, $n\geq 2$.

For the $(p,q)$-eigenvalue problem \eqref{evp}, we assume that
\[
(p,q)\in I_\alpha,
\]
where $I_\alpha$ is defined in \eqref{Ialpha}. Similarly, for the singular problem \eqref{meqn}, we assume that
\[
(p,\beta)\in J_\alpha,
\]
where $J_\alpha$ is defined in \eqref{Jalpha}.

Let $\Omega$ be a bounded smooth domain in $\mathbb{H}^n$, with $n\geq 2$. The group operation $\circ$ and the Kor\'anyi-type norm $|\cdot|_K$ are defined in \eqref{gm} and \eqref{kn}, respectively. For $a,b\in\mathbb{R}^k$, we denote by $\langle a,b\rangle$ the standard inner product and by $|a|$ the Euclidean norm in $\mathbb{R}^k$, where $k\geq 1$.

The notation $\omega\Subset\Omega$ means that $\omega$ is compactly contained in $\Omega$, that is,
\[
\omega\subset\overline{\omega}\subset\Omega.
\]
Moreover, by $\|\cdot\|$, we mean the norm $\|\cdot\|_{X_\alpha}$ defined in \eqref{n1} for $\alpha\in\{0,1\}$. We denote by $\|\cdot\|_{\infty}$ the norm $\|\cdot\|_{L^\infty(\Omega)}$.

For $l>1$, its conjugate exponent is denoted by
\[
l'=\frac{l}{l-1}.
\]
For $m>1$, we define the function
\[
J_m(t)=|t|^{m-2}t,\qquad t\in\mathbb{R}.
\]

We also use the notation
\[
d\nu
=
\frac{dx\,dy}{|y^{-1}\circ x|_{K}^{Q+sp}}.
\]

For $k\in\mathbb{R}$, we adopt the standard notation
\[
k^+=\max\{k,0\},\qquad
k^-=\max\{-k,0\},\qquad
k_-=\min\{k,0\}.
\]

If $F$ is a measurable function on a set $S$ and $c,d$ are constants, then the notation
\[
c\leq F\leq d\quad\text{in }S
\]
means that
\[
c\leq F(x)\leq d
\qquad\text{for almost every }x\in S.
\]

Finally, the symbol $C$ denotes a generic positive constant, which may change from line to line or even on the same line. If $C$ depends on the parameters $r_1,r_2,\ldots,r_k$, we explicitly indicate this dependence by writing
\[
C=C(r_1,r_2,\ldots,r_k).
\]

\subsection{Main results}
Our main results related to the eigenvalue problem \eqref{evp} reads as follows:

\begin{Theorem}\label{newthm}
The following properties hold:
\begin{enumerate}
\item[$(a)$] There exists a sequence $\{w_k\}_{k\in\mathbb{N}}\subset X_\alpha\cap L^q(\Omega)$ such that $\|w_k\|_{L^q(\Omega)}=1$ and for every $v\in X_\alpha$, we have 
\begin{align}\label{its}
&\alpha\int_{\Om}|\nabla_H\,w_{k+1}|^{p-2}\nabla_H\,w_{k+1}\nabla_H\,v\,dx+\int_{\mathbb{H}^n}\int_{\mathbb{H}^n}{J_p(w_{k+1}(x)-w_{k+1}(y))(v(x)-v(y))}\,d\nu\\
&\qquad=\mu_k\int_{\Omega}|w_{k}|^{q-2}w_{k}v\,dx,
\end{align}
where for every $n\in\mathbb{N}$, 
\begin{align}\label{subopmin}
\mu_k\geq\lambda^*:=\inf\left\{\|u\|^p:u\in X_\alpha\cap {L^q(\Omega)},\,\|u\|_{L^q(\Omega)}=1\right\}.
\end{align}

\item[$(b)$] Moreover, the sequences $\{\mu_k\}_{k\in\mathbb{N}}$ and $\{\|w_{k+1}\|_{X_\alpha}^{p}\}_{k\in\mathbb{N}}$ given by \eqref{its} are nonincreasing and converge to the same limit $\eta$, which is bounded below by $\lambda^*$. Further, there exists a subsequence $\{k_j\}_{j\in\mathbb{N}}$ such that both $\{w_{k_j}\}_{j\in\mathbb{N}}$ and $\{w_{k_{j+1}}\}_{j\in\mathbb{N}}$ converges in $X_\alpha$ to the same limit $w\in X_\alpha\cap L^q(\Omega)$ with $\|w\|_{L^q(\Omega)}=1$ and $(\mu,w)$ is an eigenpair of \eqref{meqn}.
\end{enumerate}
\end{Theorem}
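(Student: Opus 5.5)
We follow the nonlinear inverse power method of \cite{Ercole}. Fix any $w_0\in X_\alpha\cap L^q(\Omega)$ with $\|w_0\|_{L^q(\Omega)}=1$. Given $w_k$, the functional $v\mapsto \int_\Omega |w_k|^{q-2}w_k v\,dx$ is continuous on $X_\alpha$, by Hölder's inequality and the continuous embedding $X_\alpha\hookrightarrow L^q(\Omega)$ (valid since $q<p_s^*$ or $q<p^*$). The operator $A:X_\alpha\to X_\alpha^*$ associated with the left-hand side of \eqref{its} is strictly monotone, coercive, hemicontinuous and $(p-1)$-homogeneous, so Minty--Browder gives a unique $u_{k+1}\in X_\alpha$ with $A u_{k+1}=|w_k|^{q-2}w_k$. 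This $u_{k+1}\neq0$, since testing with $w_k$ gives $\langle Au_{k+1},w_k\rangle=1$. We then set $w_{k+1}=u_{k+1}/\|u_{k+1}\|_{L^q}$ and $\mu_k=\|u_{k+1}\|_{L^q}^{p-1}$; by homogeneity, $w_{k+1}$ satisfies \eqref{its}.

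The key estimates come from testing \eqref{its} with two choices of $v$.
\begin{enumerate}
\item[(i)] Testing with $v=w_{k+1}$ gives $\|w_{k+1}\|^p=\mu_k\int|w_k|^{q-2}w_kw_{k+1}\le\mu_k\|w_k\|_q^{q-1}\|w_{k+1}\|_q=\mu_k$. Together with $\|w_{k+1}\|^p\ge\lambda^*$, this yields $\mu_k\ge\|w_{k+1}\|^p\ge\lambda^*$.
\item[(ii)] Testing with $v=w_k$ gives $\mu_k=\mu_k\|w_k\|_q^q=\langle Aw_{k+1},w_k\rangle\le\|w_{k+1}\|^{p-1}\|w_k\|$, by Hölder applied to both the local and the nonlocal parts. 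Hence $\mu_k\le\|w_{k+1}\|^{p-1}\|w_k\|\le \mu_k^{(p-1)/p}\|w_k\|$, so $\mu_k\le\|w_k\|^p\le\mu_{k-1}$.
\end{enumerate}
Combining (i) and (ii) gives the chain $\|w_{k+1}\|^p\le\mu_k\le\|w_k\|^p\le\mu_{k-1}$. Both sequences are therefore nonincreasing and bounded below by $\lambda^*$, and they interlace, so they converge to a common limit $\eta\ge\lambda^*$. This proves (a) and the first part of (b).

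For the convergence, note that $\{w_k\}$ is bounded in $X_\alpha$. By reflexivity and the compact embedding $X_\alpha\hookrightarrow\hookrightarrow L^q(\Omega)$ (subcritical $q$), we extract $k_j$ such that $w_{k_j}\rightharpoonup w$ and $w_{k_j+1}\rightharpoonup \tilde w$ weakly in $X_\alpha$ and strongly in $L^q$. Hence $\|w\|_q=\|\tilde w\|_q=1$. Passing to the limit in \eqref{its} with $v=w_{k_j}$ gives $\eta=\lim\langle Aw_{k_j+1},w_{k_j}\rangle$.

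To show strong convergence we use the $(S_+)$ property of $A$. Write $\langle Aw_{k_j+1},w_{k_j+1}-\tilde w\rangle=\mu_{k_j}\int|w_{k_j}|^{q-2}w_{k_j}(w_{k_j+1}-\tilde w)$. The right-hand side tends to $0$ by strong $L^q$ convergence, so $w_{k_j+1}\to\tilde w$ strongly in $X_\alpha$; for this step we rely on uniform convexity, or on the standard $(S_+)$ property of the $p$-Laplacian-type operator. Passing to the limit in \eqref{its} then yields $A\tilde w=\eta|w|^{q-2}w$.

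The main obstacle is identifying $\tilde w=w$. We test the limit equation with $\tilde w$ and with $w$. This gives $\|\tilde w\|^p=\eta\int|w|^{q-2}w\tilde w\le\eta$, while $\|\tilde w\|^p=\lim\|w_{k_j+1}\|^p=\eta$. Hence equality holds in Hölder's inequality $\int|w|^{q-2}w\tilde w\le\|w\|_q^{q-1}\|\tilde w\|_q$. Equality in Hölder, together with $\|w\|_q=\|\tilde w\|_q=1$, forces $\tilde w=w$ a.e.

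Next we upgrade $w_{k_j}\to w$ to strong convergence in $X_\alpha$. Weak lower semicontinuity gives $\|w\|^p\le\liminf\|w_{k_j}\|^p=\eta$, and $\|w\|^p=\|\tilde w\|^p=\eta$. In the uniformly convex space $X_\alpha$, weak convergence together with convergence of norms gives strong convergence. Since $\|u\|_q^{p-q}=1$, we conclude that $(\eta,w)$ is an eigenpair of \eqref{evp}.
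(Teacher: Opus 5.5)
Your proposal is correct and follows essentially the paper's route: the paper verifies $(H_1)$--$(H_5)$ and invokes Ercole's abstract inverse-iteration theorem, and you write that argument out explicitly (the interlacing chain, the $(S_+)$-type strong convergence, and the equality case of H\"older to show that the two limits coincide). One slip to fix: by $(H_1)$, $A(w_{k+1})=\|u_{k+1}\|_{L^q(\Omega)}^{1-p}A(u_{k+1})$, so the correct choice is $\mu_k=\|u_{k+1}\|_{L^q(\Omega)}^{1-p}$, not $\|u_{k+1}\|_{L^q(\Omega)}^{p-1}$; since afterwards you only use $\mu_k>0$, nothing downstream changes.
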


\begin{Theorem}\label{subopthm1}
Suppose $\{u_k\}_{k\in\mathbb{N}}\subset X_\alpha\cap L^q(\Omega)$ such that $\|u_k\|_{L^q(\Omega)}=1$ and $\lim_{k\to\infty}\|u_k\|_{X_\alpha}^{p}=\lambda^*$, where $\lambda^*$ is defined in \eqref{subopmin} above.
Then there exists a subsequence $\{u_{n_j}\}_{j\in\mathbb{N}}$ which converges weakly in $X_\alpha$ to $u\in X_\alpha\cap L^q(\Omega)$ with $\|u\|_{L^q(\Omega)}=1$ such that  
$
\lambda^*=\|u\|^p.
$
Moreover, $(\lambda^*,u)$ is an eigenpair of \eqref{meqn} and any associated eigenfunction of $\lambda^*$ are precisely the scalar multiple of those vectors at which $\lambda^*$ is reached.
\end{Theorem}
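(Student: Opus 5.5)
The argument is the direct method of the calculus of variations, followed by a Lagrange-multiplier computation and a convexity (hidden convexity / Picone-type) argument for the characterization of eigenfunctions. Throughout I use two facts about $X_\alpha$. First, it is a reflexive (uniformly convex) Banach space with norm $\|\cdot\|$ from \eqref{n1}. Second, the embedding $X_\alpha\hookrightarrow L^q(\Omega)$ is compact for $(p,q)\in I_\alpha$. This follows from the fractional Folland–Stein type embedding when $\alpha=0$, $q<p_s^*$, and from the horizontal Sobolev embedding (the nonlocal part being controlled by the local one on bounded $\Omega$) when $\alpha=1$, $q<p^*$.

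\textbf{Attainment of $\lambda^*$.} Since $\|u_k\|^p\to\lambda^*$, the sequence is bounded in $X_\alpha$. Reflexivity gives a subsequence $u_{n_j}\rightharpoonup u$ in $X_\alpha$, and compactness gives $u_{n_j}\to u$ in $L^q(\Omega)$. Hence $\|u\|_{L^q(\Omega)}=1$, so $u$ is admissible in \eqref{subopmin} and $\lambda^*\le\|u\|^p$. Weak lower semicontinuity of the norm gives $\|u\|^p\le\liminf\|u_{n_j}\|^p=\lambda^*$. Therefore $\lambda^*=\|u\|^p$; note also that $\lambda^*>0$ by the Sobolev embedding.

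\textbf{$(\lambda^*,u)$ is an eigenpair.} Fix $v\in X_\alpha$ and set $g(t)=\|u+tv\|^p/\|u+tv\|_{L^q(\Omega)}^p$ for small $|t|$; the denominator is nonzero near $t=0$. By homogeneity, $g$ attains its minimum $\lambda^*$ at $t=0$, so $g'(0)=0$. Both numerator and denominator are $C^1$ in $t$, and
\[
\frac{d}{dt}\|u+tv\|^p\Big|_{t=0}=p\Big(\alpha\int_\Omega|\nabla_Hu|^{p-2}\nabla_Hu\nabla_Hv\,dx+\int_{\mathbb H^n}\int_{\mathbb H^n}J_p(u(x)-u(y))(v(x)-v(y))\,d\nu\Big),
\]
with the natural convention on how $\|\cdot\|^p$ splits into its local and nonlocal parts as in \eqref{n1}. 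Also $\frac{d}{dt}\|u+tv\|_{L^q}^p\big|_{t=0}=p\|u\|_{L^q}^{p-q}\int_\Omega|u|^{q-2}uv\,dx$. Setting $g'(0)=0$ and using $\|u\|_{L^q}=1$ yields exactly the weak form of \eqref{evp} with $\lambda=\lambda^*$.

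\textbf{Characterization of eigenfunctions.} Minimizers are eigenfunctions by the previous step and scaling, since the problem is $(p-1)$-homogeneous on both sides. For the converse, let $v\ne0$ be an eigenfunction for $\lambda^*$ and test the equation with $v$ itself:
\[
\|v\|^p=\lambda^*\|v\|_{L^q}^{p-q}\|v\|_{L^q}^q=\lambda^*\|v\|_{L^q}^p.
\]
So $v/\|v\|_{L^q}$ attains $\lambda^*$, and every eigenfunction of $\lambda^*$ is a scalar multiple of a minimizer. This is the content of the final assertion.

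\textbf{Main obstacle.} If the intended claim is stronger, namely that all minimizers are multiples of a single one (simplicity), more work is needed. I would first pass to $|u|$, which is also a minimizer because $\||u|\|\le\|u\|$. Then I would obtain positivity from the strong minimum principle available in the paper, and apply the hidden convexity of $t\mapsto\|t^{1/p}\|^p$ along the curve $\big((u^p+v^p)/2\big)^{1/p}$ together with $q\le p$ convexity. For $q>p$ simplicity may fail, so I would state only the scalar-multiple characterization above. The compact embedding is the other delicate input, and I take it from the preliminaries.
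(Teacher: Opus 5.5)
Your proof is correct, but its key step takes a different route from the paper's. The paper gives no direct argument. It checks that $A$ and $B$ satisfy the abstract hypotheses $(H_1)$--$(H_5)$ of Lemma~\ref{auxlmab} and then invokes Ercole's abstract Proposition~2. In that framework $A$ and $B$ are not assumed to be derivatives of functionals, so the equation for a minimizer $u$ is obtained essentially without differentiation. One solves $A(w)=B(u)$ via $(H_5)$ (Minty--Browder). Then $\|w\|^p=\langle B(u),w\rangle$ and $\langle A(w),u\rangle=\|u\|_{L^q(\Omega)}^q$ are played against the minimality of $\lambda^*$ and the inequalities in $(H_3)$--$(H_4)$. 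Finally, the \emph{equality case} of $(H_3)$ forces $w=tu$ with $t>0$.

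You instead use the concrete structure: $A$ and $B$ are the G\^ateaux derivatives of $\frac1p\|\cdot\|^p$ and $\frac1q\|\cdot\|_{L^q(\Omega)}^q$, so you set the derivative of the Rayleigh quotient to zero. Your attainment step (boundedness, weak compactness, compact embedding, weak lower semicontinuity) and your converse (testing the equation with the eigenfunction itself) match the abstract argument.

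Your route is more elementary and self-contained. It needs neither surjectivity of $A$ nor the characterization of equality in $(H_3)$, which is the one genuinely delicate ingredient of the abstract approach. The paper's route buys economy: the same hypotheses $(H_1)$--$(H_5)$ also drive the inverse iteration of Theorem~\ref{newthm}, so both results come from one framework.

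Your ``main obstacle'' paragraph is not needed here. The statement claims only that eigenfunctions of $\lambda^*$ are scalar multiples of minimizers, not simplicity, and you have already proved that.
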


Our final main result concerns the following qualitative properties of the eigenfunctions of \eqref{meqn}.
\begin{Theorem}\label{regthm}
Assume that $\lambda>0$ is an eigenvalue of the problem \eqref{meqn} and $u\in X_\alpha\setminus\{0\}$ is a corresponding eigenfunction. Then $(a)$ $u\in L^\infty(\Omega)$. $(b)$ Moreover, if $u$ is nonnegative in $\Omega$, then $u>0$ in $\Omega$. Further, for every $\omega\Subset\Omega$ there exists a positive constant $c=c(\omega)$ such that $u\geq c>0$ in $\omega$.
\end{Theorem}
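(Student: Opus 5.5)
For part $(a)$ I would run a Stampacchia/De Giorgi truncation argument adapted to the mixed operator. Normalize $\|u\|_{L^q(\Omega)}=1$, so that $u$ satisfies
\[
\mathcal M_\alpha u=\lambda |u|^{q-2}u \quad\text{weakly in }\Omega .
\]
Since $-u$ is also an eigenfunction, it suffices to bound $u^+$. For $k\ge 1$ I test the weak formulation with $v=(u-k)^+\in X_\alpha$.

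\textbf{Energy inequality.} The local term gives exactly $\alpha\int_{A_k}|\nabla_H u|^p\,dx$, where $A_k=\{u>k\}$. For the nonlocal term I use the standard pointwise inequality
\[
J_p(a-b)\bigl((a-k)^+-(b-k)^+\bigr)\ge |(a-k)^+-(b-k)^+|^p,
\]
which bounds it below by the Gagliardo seminorm of $(u-k)^+$. This yields
\[
\|(u-k)^+\|^p\le \lambda\int_{A_k}|u|^{q-1}(u-k)\,dx .
\]

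\textbf{Iteration.} By the Sobolev embedding $X_\alpha\hookrightarrow L^{r}(\Omega)$ with $r=p^*$ (if $\alpha=1$) or $r=p_s^*$ (if $\alpha=0$), together with $q<r$, I estimate the right-hand side. Writing $|u|^{q-1}\le C\bigl((u-k)^{q-1}+k^{q-1}\bigr)$ on $A_k$ and applying Hölder, one obtains
\[
\|(u-k)^+\|_{L^r}^p\le C\bigl(\|(u-k)^+\|_{L^r}^{q}|A_k|^{1-q/r}+k^{q-1}\|(u-k)^+\|_{L^r}|A_k|^{1-1/r}\bigr).
\]
When $q>p$, the superlinear first term is the main obstacle. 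I would handle it in one of two ways:
\begin{itemize}
\item[$(i)$] Moser-type bootstrap: test with $u\min\{|u|,M\}^{\beta}$ to raise integrability step by step, $L^{r}\to L^{r\chi}\to\cdots$, which is possible because $q<r$ (subcritical). Once $u\in L^t$ with $t$ large, $|u|^{q-2}u\in L^{m}$ with $m>Q/p$ (respectively $m>Q/(sp)$).
\item[$(ii)$] Absorption: choose $k_0$ so large that $|A_{k_0}|$ makes $C|A_k|^{1-q/r}\|(u-k)^+\|^{q-p}$ small.
\end{itemize}
With either route, the standard Stampacchia lemma for $\phi(k)=|A_k|$, namely $\phi(h)\le C(h-k)^{-r}\phi(k)^{1+\epsilon}$, gives $u\le k_0+d$ a.e. This argument uses only the energy inequality and Sobolev, so it is uniform in $\alpha\in\{0,1\}$.

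\textbf{Part $(b)$.} Let $u\ge0$, $u\not\equiv0$. By $(a)$, $f=\lambda u^{q-1}\in L^\infty$ and $f\ge0$, so $u$ is a bounded weak supersolution of $\mathcal M_\alpha u\ge 0$. I would invoke the weak Harnack inequality for nonnegative supersolutions of mixed local–nonlocal (respectively fractional) $p$-sub-Laplacian equations on $\mathbb H^n$, from the regularity literature cited in the paper. It states that for balls $B_{2R}\subset\Omega$,
\[
\Bigl(\fint_{B_R}u^{t}\Bigr)^{1/t}\le C\inf_{B_{R/2}}u ,
\]
the tail term having a favourable sign since $u\ge 0$ in $\mathbb H^n$.

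\textbf{Positivity.} If $\operatorname{ess\,inf}_{B_{R/2}}u=0$ for some ball, then $u\equiv0$ on $B_R$. A connectedness/chaining argument over $\Omega$ then gives $u\equiv0$, a contradiction. In the nonlocal case this is even easier: the infimum bound involves $\int u/|y^{-1}\circ x|_K^{Q+sp}$ over all of $\Omega$, which is positive. Finally, I cover $\omega\Subset\Omega$ by finitely many balls $B_{R/2}$ with $B_{2R}\subset\Omega$ and take the minimum of the lower bounds to obtain $c(\omega)>0$. The main technical obstacle is the superlinear case $q>p$ in $(a)$, which is why I would rely on the bootstrap route $(i)$.
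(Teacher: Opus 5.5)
Your proposal is correct, and much of it matches the paper. The paper also normalizes $\|u\|_{L^q(\Omega)}=1$ and tests with $(u-k)^+$. For $(b)$ it simply cites existing positivity results for the fractional ($\alpha=0$) and mixed ($\alpha=1$) operators, which your weak Harnack plus covering/chaining argument spells out.

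The genuine difference is the superlinear case $q>p$, where you say you would rely on a Moser bootstrap. The paper instead uses your absorption route $(ii)$ in a cheap form. Since $\int_{L(k)}(u-k)^q\,dx\le\|u\|_{L^q(\Omega)}^q=1$ and $p/q<1$, it bounds the left side from below by $\bigl(\int_{L(k)}(u-k)^q\,dx\bigr)^{p/q}\ge\int_{L(k)}(u-k)^q\,dx$, so both sides become $q$-homogeneous. Sobolev and H\"older on the level set give a factor $|L(k)|^{\delta}$ with $\delta=p\bigl(\frac1q-\frac1p+\frac1Q\bigr)>0$, exactly because $q<p^*$. The superlinear term is absorbed once $k|L(k)|\le\|u\|_{L^1(\Omega)}$ makes $|L(k)|$ small. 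One more H\"older step yields $\int_{L(k)}(u-k)\,dx\le Ck\,|L(k)|^{1+\delta/(q-1)}$, which is closed by a Ladyzhenskaya--Ural'tseva type level-set lemma ([LN, Lemma 5.1] in the paper).

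For $q\le p$ the paper uses $u^{q-1}\le u^{p-1}$ on $\{u>k\ge1\}$. Note that for $q<p$ your first term $\|(u-k)^+\|_{L^r}^q|A_k|^{1-q/r}$ is not directly absorbable, so you need this trick or Young's inequality.

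Your route $(i)$ also works and is more robust. It is essentially a Brezis--Kato/Moser argument for $\mathcal M_\alpha u=a|u|^{p-2}u$ with $a=\lambda|u|^{q-p}\in L^{r/(q-p)}$ and $r/(q-p)>Q/p$ (resp.\ $Q/(sp)$). However, it needs power-type test functions, the matching nonlocal algebraic inequalities on $\mathbb H^n$, and a separate $L^\infty$ estimate for $L^m$ data. The homogeneity trick avoids all of this.

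One step of route $(ii)$ is stated too quickly. After absorption you actually get $\|(u-k)^+\|_{L^r}^{p-1}\le Ck^{q-1}|A_k|^{1-1/r}$, hence
\[
\phi(h)\le C\,k^{\frac{r(q-1)}{p-1}}\,(h-k)^{-r}\,\phi(k)^{\frac{r-1}{p-1}} .
\]
The constant depends on $k$, with an exponent exceeding $r$ when $q>p$, so Stampacchia's lemma with a fixed $C$ does not apply verbatim. There are two repairs:
\begin{itemize}
\item Trade homogeneity as the paper does. Use $\|(u-k)^+\|_{L^r}\le\|u\|_{L^r}$ to get $\|(u-k)^+\|_{L^r}\le Ck\,|A_k|^{\frac{r-1}{r(q-1)}}$, which gives the scale-invariant bound $\phi(h)\le C\bigl(\frac{k}{h-k}\bigr)^{r}\phi(k)^{\frac{r-1}{q-1}}$.
\item Keep your inequality as it stands.
\end{itemize}
In both cases, run the iteration only on levels in $[k_0,2k_0]$; that is all Stampacchia's proof uses when $d\le k_0$. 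In the first case, $d\le k_0$ only needs $\phi(k_0)$ small. In the second case, it follows for large $k_0$ from $\phi(k_0)\le\|u\|_{L^r}^r k_0^{-r}$, precisely because $q<r$. With this repair your conclusion stands.
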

Our final main result is the following multiplicity result for the problem \eqref{meqn}.
\begin{Theorem}\label{thm2}
Assume that $\gamma:\overline{\Om}\to(0,1)$ is a continuous function. Then there exists a constant 
$\mu^{*}>0$ such that, for every $\mu \in (0,\mu^{*})$, equation \eqref{meqn} 
admits at least two distinct weak solutions in $X_\alpha$. Moreover, every weak solution of \eqref{meqn} belongs to $L^\infty(\Om)$.
\end{Theorem}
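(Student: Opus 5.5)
We follow the approximation scheme of Arcoya type. For $\epsilon\in(0,1)$ we replace the singular term by the bounded nonlinearity $\mu (u^++\epsilon)^{-\gamma(x)}$ and study
\[
\mathcal M_\alpha u=\mu\,(u^++\epsilon)^{-\gamma(x)}+(u^+)^\beta\quad\text{in }\Omega,\qquad u=0\ \text{in }\mathbb H^n\setminus\Omega,
\]
with the $C^1$ energy functional
\[
I_\epsilon(u)=\frac1p\|u\|^p-\mu\int_\Omega \frac{(u^++\epsilon)^{1-\gamma(x)}-\epsilon^{1-\gamma(x)}}{1-\gamma(x)}\,dx-\frac{1}{\beta+1}\int_\Omega (u^+)^{\beta+1}dx
\]
on $X_\alpha$. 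Because $\beta+1<p^*_s$ (respectively $p^*$), the embedding $X_\alpha\hookrightarrow L^{\beta+1}(\Omega)$ is compact.

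\textbf{Step 1: two solutions for the approximate problem.} We use $\lambda^*$ from Theorem \ref{subopthm1}, with $q=\beta+1$, to get $\|u\|_{L^{\beta+1}}^{\beta+1}\le (\lambda^*)^{-(\beta+1)/p}\|u\|^{\beta+1}$. Since $1-\gamma(x)\le 1-\gamma^-<1<p<\beta+1$, where $\gamma^-=\min\gamma$, the sublinear term is bounded by $C\mu(\|u\|^{1-\gamma^-}+\|u\|+1)$. Hence there exist $\rho>0$, $\mu^*>0$ and $r>0$, all independent of $\epsilon$, such that $I_\epsilon\ge r$ on $\|u\|=\rho$ for $\mu<\mu^*$. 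Next, $I_\epsilon(t\phi)\to-\infty$ as $t\to\infty$ for a fixed $\phi\ge0$. Also, $\inf_{\overline B_\rho}I_\epsilon<0$, since $I_\epsilon(t\phi)<0$ for small $t$: the singular term behaves like $t$ times a positive quantity uniformly in $\epsilon$, while the other terms are $O(t^p)$. The Palais–Smale condition holds by the usual Ambrosetti–Rabinowitz argument, because $\beta+1>p$ and the subcritical growth gives compactness. The Mountain Pass theorem then gives $u_\epsilon$ with $I_\epsilon(u_\epsilon)\ge r>0$. Ekeland's principle on $\overline B_\rho$ gives $v_\epsilon$ with $I_\epsilon(v_\epsilon)\le -m<0$, where $m$ is independent of $\epsilon$. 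Testing with $u^-$ shows both solutions are nonnegative.

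\textbf{Step 2: uniform bounds and positivity.} The energy levels are uniformly bounded: the mountain-pass level is bounded above by $\max_t I_\epsilon(t\phi)\le C$ uniformly in $\epsilon$. Combining this with $\langle I_\epsilon'(u),u\rangle=0$ gives $\|u_\epsilon\|,\|v_\epsilon\|\le C$. Comparison with the solution $z_\epsilon$ of $\mathcal M_\alpha z=\mu(z+\epsilon)^{-\gamma}$ with $\epsilon$ small, or directly with a small multiple of a positive eigenfunction from Theorem \ref{regthm}(b), gives a local lower bound: for each $\omega\Subset\Omega$ there is $c_\omega>0$ with $u_\epsilon,v_\epsilon\ge c_\omega$ in $\omega$, independent of $\epsilon$. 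This uses the weak comparison principle for $\mathcal M_\alpha$ together with monotonicity of the right side in $\epsilon$. We also expect a uniform $L^\infty$ bound from Stampacchia truncation, as in Theorem \ref{regthm}(a), since $(u+\epsilon)^{-\gamma}\le u^{-\gamma}$ is bounded where $u\ge1$.

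\textbf{Step 3: passage to the limit.} Take $\epsilon\to0$. Along a subsequence $u_\epsilon\rightharpoonup u$ weakly in $X_\alpha$, strongly in $L^{\beta+1}$, and a.e. For test functions $\varphi\in C_c^\infty(\Omega)$, the singular term converges by dominated convergence, since $(u_\epsilon+\epsilon)^{-\gamma}\le c_\omega^{-\gamma}$ on $\mathrm{supp}\,\varphi$. For $\alpha=0$, the nonlocal term passes to the limit because $J_p(u_\epsilon(x)-u_\epsilon(y))/|y^{-1}\circ x|_K^{(Q+sp)/p'}$ is bounded in $L^{p'}$ and converges a.e. For $\alpha=1$, we need a.e. convergence of $\nabla_H u_\epsilon$, which we obtain from the gradient convergence result, Theorem \ref{gradcgt}. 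This is the main obstacle.

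\textbf{Step 4: distinctness and boundedness.} To show the two limits are distinct, we prove strong convergence $u_\epsilon\to u$ in $X_\alpha$. We test with $u_\epsilon-u$ and use $S_+$-type monotonicity of $\mathcal M_\alpha$. The singular term is handled by Fatou/Vitali, since $\int (u_\epsilon+\epsilon)^{-\gamma}u_\epsilon\le\int u_\epsilon^{1-\gamma}$ is uniformly integrable, and by density $\int(u_\epsilon+\epsilon)^{-\gamma}u\to\int u^{-\gamma}u$. It follows that the energies converge, so $I(u)\ge r>0>-m\ge I(v)$ and hence $u\ne v$. Finally, any weak solution is in $L^\infty$ by Stampacchia's method. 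Testing with $(u-k)^+$ for $k\ge1$, we have $u^{-\gamma(x)}\le1$ on $\{u>k\}$, so the right side is dominated by $C(1+u^\beta)$ with subcritical $\beta$. The argument then proceeds exactly as in Theorem \ref{regthm}(a), with a De Giorgi iteration after a Brezis–Kato type bootstrap.
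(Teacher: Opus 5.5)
Your proposal is correct and follows essentially the paper's route: Arcoya-type $\varepsilon$-regularization, an $\varepsilon$-independent mountain-pass geometry giving a positive-level and a negative-level critical point, uniform bounds in $X_\alpha$, comparison for $\varepsilon$-uniform interior positivity, Lemma~\ref{gradcgt} to pass to the limit when $\alpha=1$, and strong convergence with energy separation for distinctness (your truncation argument for the $L^\infty$ claim replaces the paper's citation). The one real variation is that you obtain strong convergence from the $(S_+)$ property tested with $u_\varepsilon-u$, instead of the paper's identity $\|v_\varepsilon\|^p\to\|v_0\|^p$; if you use only Fatou's lemma there, i.e.\ $\liminf\int_\Omega(u_\varepsilon+\varepsilon)^{-\gamma(x)}u\,dx\ge\int_\Omega u^{1-\gamma(x)}\,dx$, this step needs neither the limit equation nor a.e.\ gradient convergence, so doing it before Step 3 gives $\nabla_H u_\varepsilon\to\nabla_H u$ in $L^p(\Omega)$ and makes Lemma~\ref{gradcgt} unnecessary.
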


\subsection*{Organization of the paper}

The paper is organized as follows. In Section 2, we introduce the functional framework, notation and assumptions, and recall some preliminary results that will be used throughout the paper. Section 3 is devoted to the nonlinear $(p,q)$-eigenvalue problem, where we establish the existence of eigenfunctions and corresponding eigenvalues and investigate their regularity properties. In Section 4, we apply the developed spectral theory to a perturbed singular problem involving variable singular nonlinearities and prove multiplicity results for weak solutions.

\section{Functional framework and preliminaries}

\subsection{Heisenberg group and functional spaces}

In this subsection, we introduce the basic notation related to the Heisenberg group and define the local and fractional Sobolev spaces associated with the horizontal structure. We also recall the relevant embedding results and properties of these spaces.

The Euclidean space $\mathbb{R}^{2n+1}$, endowed with the group multiplication
\begin{equation}\label{gm}
x \circ y :=
\left(
x_1+y_1,\,
x_2+y_2,\,
\ldots,\,
x_{2n}+y_{2n},\,
\tau + \tau' + \frac{1}{2}\sum_{i=1}^{n}\big(x_i y_{n+i} - x_{n+i} y_i\big)
\right),
\end{equation}
where $x=(x_1,\ldots,x_{2n},\tau)$ and $y=(y_1,\ldots,y_{2n},\tau')\in\mathbb{R}^{2n+1}$, defines the Heisenberg group $\mathbb{H}^n$.

The left-invariant vector fields on $\mathbb{H}^n$ are given by
\[
X_i = \partial_{x_i} - \frac{x_{n+i}}{2}\,\partial_\tau,
\qquad
X_{n+i} = \partial_{x_{n+i}} + \frac{x_i}{2}\,\partial_\tau,
\qquad 1\le i\le n,
\]
and the nontrivial commutator is
\[
T = \partial_\tau = [X_i,\, X_{n+i}]
= X_i X_{n+i} - X_{n+i} X_i,
\qquad 1\le i\le n.
\]
We refer to $X_1, X_2, \ldots, X_{2n}$ as the horizontal vector fields on $\mathbb{H}^n$, and $T$ as the vertical vector field.

The Haar measure on $\mathbb{H}^n$ is equivalent to the Lebesgue measure on $\mathbb{R}^{2n+1}$.  
For a measurable set $E\subset \mathbb{H}^n$, we denote its Lebesgue measure by $|E|$.

For $x=(x_1,\ldots,x_{2n},\tau)$, we define its Kor\'anyi-type norm by  
\begin{equation}\label{kn}
|x|_K
= \left( \left(\sum_{i=1}^{2n} x_i^2 \right)^{2} + \tau^{2} \right)^{1/4}.
\end{equation}
The Carnot-Carath\'eodory distance between two points $x,y\in\mathbb{H}^n$ is defined as the infimum of the lengths of horizontal curves joining them. We define
\[
d(x,y)=|y^{-1}\circ x|_K,
\]
which is equivalent to the Carnot-Carath\'eodory distance. 

The ball of radius $r>0$ centered at $\xi_0$ with respect to $d$ is given by  
\[
B_r(x_0)=\{\xi\in\mathbb{H}^n: |y^{-1}\circ x_0|_{K}<r\}.
\]
When the center is irrelevant or understood from the context, we write simply $B_r := B_r(x_0)$.

The homogeneous dimension of $\mathbb{H}^n$ is $Q = 2n + 2$. Let $1 < p < \infty$ and $\Om \subset \mathbb{H}^n$ with $n\geq 2$ be a bounded smooth domain.
The Sobolev space $HW^{1,p}(\Omega)$ is defined by
\[
HW^{1,p}(\Om)
=
\left\{
u \in L^{p}(\Om) :
|\nabla_H\,u| \in L^{p}(\Om)
\right\},
\]
where $\na_H\,u$ is the horizontal gradient of $u$ defined by  
$
\na_H\,u = (X_1u, X_2u, \ldots, X_{2n}u).
$
and is endowed with the norm
\[
\|u\|_{HW^{1,p}(\Om)}
=
\|u\|_{L^{p}(\Om)}
+
\left\|\nabla_H\,u\right\|_{L^{p}(\Om)}.
\]

Let $0<s<1 < p < \infty$, and suppose $v:\mathbb{H}^n \to \mathbb{R}$ be a measurable function.  
The Gagliardo seminorm of $v$ is defined by
\[
[v]_{HW^{s,p}(\mathbb{H}^n)}
=
\left(
\int_{\mathbb{H}^n}\int_{\mathbb{H}^n}
{|v(x)-v(y)|^{p}}
\,d\nu
\right)^{1/p},
\]
where 
$$
d\nu:=\frac{dx\,dy}{|y^{-1}\circ x|_{K}^{Q+sp}}.
$$

The fractional Sobolev space on the Heisenberg group is then defined as
\[
HW^{s,p}(\mathbb{H}^n)
=
\left\{
v \in L^{p}(\mathbb{H}^n) : [v]_{HW^{s,p}(\mathbb{H}^n)} < \infty
\right\}.
\]
The space $HW^{s,p}(\mathbb{H}^n)$ is endowed with the natural fractional norm
\[
\|v\|_{HW^{s,p}(\mathbb{H}^n)}
=
\left( 
\|v\|_{L^{p}(\mathbb{H}^n)}^{p}
+
[v]_{HW^{s,p}(\mathbb{H}^n)}^{p}
\right)^{1/p}.
\]

The fractional Sobolev space $HW^{s,p}(\Om)$ and its associated norm  
$\|v\|_{HW^{s,p}(\Om)}$ are defined in an analogous manner.

For $\alpha\in\{0,1\}$, we define the space $X_\alpha$ as the closure of $C_c^{\infty}(\Om)$ with respect to the norm
\begin{equation}\label{n1}
\|u\|_{X_\alpha}:=\Big(\alpha[u]_{1,p}^p+[u]_{s,p}^p\Big)^\frac{1}{p},
\end{equation}
where
$$
[u]_{1,p}:=\|\na_H\,u\|_{L^p(\Om)}\text{ and }
[u]_{s,p}:=[u]_{HW^{s,p}(\mb{H}^n)}.
$$

By \cite[Theorem 2.4]{Deb}, there exists a constant $c=c(Q,p,s,\Omega)>0$ such that
\begin{equation}\label{locnonsem}
[u]_{s,p}\leq c[u]_{1,p}\quad\forall u\in X_1.
\end{equation}
Using \eqref{locnonsem}, it follows that the norm $\|u\|_{X_1}$ defined by \eqref{n1}
is equivalent to $[u]_{1,p}$.

We observe that for any $\alpha\in\{0,1\}$, the spaces $X_\alpha$ are real separable and uniformly convex Banach space.

Next we define the notion of eigenpair of the problem \eqref{meqn}.
\begin{definition}\label{def}
We say that $(\lambda,u)\in \mathbb{R}\times (X_\alpha\setminus\{0\})$ is an eigenpair of \eqref{meqn} if for every $\phi\in X_\alpha$, we have
\begin{equation}\label{mwksol}
\begin{split}
&\alpha\int_{\Omega}|\nabla_H\, u|^{p-2}\nabla_H\, u\nabla_H\,\phi\,dx+\int_{\mathbb{H}^n}\int_{\mathbb{H}^n}{J_p(u(x)-u(y))(\phi(x)-\phi(y))}\,d\nu\\
&=\lambda\|u\|_{L^q(\Om)}^{p-q}\int_{\Omega}|u|^{q-2}u\phi\,dx.
\end{split}
\end{equation}
\end{definition}
We refer to $\lambda$ as an eigenvalue and $u$ as an eigenfunction of \eqref{meqn} corresponding to the eigenvalue $\lambda$.

\begin{definition}\label{def2}
We say that $u\in X_\alpha$ is a weak solution of \eqref{meqn}, if for every $\omega\Subset\Om$, there exists a positive constant $c(\omega)$ such that $u\geq c>0$ in almost everywhere in $\omega$ and $\phi\in C_c^\infty(\Om)$, we have
\begin{equation}\label{wksoleqn}
\begin{split}
&\alpha\int_{\Omega}|\nabla_H\, u|^{p-2}\nabla_H\, u\nabla_H\,\phi\,dx+\int_{\mathbb{H}^n}\int_{\mathbb{H}^n}J_p(u(x)-u(y))(\phi(x)-\phi(y))\,d\nu\\
&=\mu\int_{\Omega}u^{-\gamma(x)}\phi\,dx+\int_{\Om}u^\beta\phi\,dx.
\end{split}
\end{equation}
\end{definition}

\begin{Remark}\label{rmkdef}
We observe that Lemma \ref{lemb} ensures the above Definitions \ref{def} and \ref{def2} are well stated.
\end{Remark}

\subsection{Auxiliary results}
Here we recall several preliminary results that will be used in the subsequent analysis. For the following result, we refer to \cite[Theorem $9.14$]{MB}.
\begin{Theorem}\label{MBthm}
Let $V$ be a real separable reflexive Banach space and $V^*$ be the dual of $V$. Assume that $A:V\to V^{*}$ is a bounded, continuous, coercive and monotone operator. Then $A$ is surjective, i.e., given any $f\in V^{*}$, there exists $u\in V$ such that $A(u)=f$. If $A$ is strictly monotone, then $A$ is also injective. 
\end{Theorem}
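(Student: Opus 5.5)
The statement is the Minty--Browder surjectivity theorem (cited from \cite[Theorem 9.14]{MB}). I would prove it by a Galerkin approximation in finite dimensions followed by Minty's monotonicity trick.

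\emph{Finite-dimensional reduction.} Since $V$ is separable, fix an increasing sequence of finite-dimensional subspaces $V_1\subset V_2\subset\cdots$ whose union is dense in $V$. For each $m$, define $A_m:V_m\to V_m^*$ by $\langle A_m u,v\rangle=\langle A u,v\rangle$ for $v\in V_m$, and let $f_m=f|_{V_m}$. Identify $V_m\cong\mathbb R^{d_m}$. By coercivity, $\langle Au,u\rangle/\|u\|\to\infty$ as $\|u\|\to\infty$, so there is $R$, independent of $m$, with $\langle A_m u-f_m,u\rangle>0$ whenever $\|u\|=R$. By continuity of $A$ and the finite-dimensional acute-angle lemma (a consequence of Brouwer's fixed point theorem), there exists $u_m\in V_m$ with $\|u_m\|\le R$ and $A_m u_m=f_m$.

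\emph{Passage to the limit.} The sequence $\{u_m\}$ is bounded, and $A$ is bounded, so $\{Au_m\}$ is bounded in $V^*$. By reflexivity, along a subsequence $u_m\rightharpoonup u$ in $V$ and $Au_m\rightharpoonup^*\chi$ in $V^*$. For fixed $v\in V_k$ and $m\ge k$, we have $\langle Au_m,v\rangle=\langle f,v\rangle$. By density of $\bigcup_k V_k$ and boundedness of $\{Au_m\}$, this gives $\chi=f$. Moreover $\langle Au_m,u_m\rangle=\langle f,u_m\rangle\to\langle f,u\rangle$.

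\emph{Minty trick (main step).} For any $v\in V$, monotonicity gives $\langle Au_m-Av,u_m-v\rangle\ge0$. Passing to the limit using the previous step yields
\[
\langle f-Av,u-v\rangle\ge0\quad\forall v\in V.
\]
Put $v=u-tw$ with $t>0$ and $w\in V$, divide by $t$, and let $t\to0^+$. Continuity of $A$ (hemicontinuity suffices) gives $\langle f-Au,w\rangle\ge0$ for all $w$. Replacing $w$ by $-w$ gives equality, so $Au=f$.

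\emph{Injectivity.} Suppose $A$ is strictly monotone and $Au_1=Au_2$ with $u_1\neq u_2$. Then $0=\langle Au_1-Au_2,u_1-u_2\rangle>0$, a contradiction.

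I expect the main obstacle to be the limit identification, since $A$ is only continuous in norm and $u_m$ converges only weakly. The Minty inequality is exactly what circumvents this; the finite-dimensional existence step is standard once coercivity supplies the uniform radius $R$.
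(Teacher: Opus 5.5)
Your proof is correct: it is the standard Galerkin construction with the Brouwer (acute-angle) lemma, followed by Minty's monotonicity trick. The paper does not prove this statement at all and simply quotes the Minty--Browder theorem from \cite[Theorem 9.14]{MB}. Your argument uses only demicontinuity of $A$: it is needed for continuity of the finite-dimensional restrictions $A_m$ and for hemicontinuity in the final step. This fits well, because demicontinuity is exactly what the paper verifies for its operator in Lemma~\ref{newlem}$(i)$.
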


The next result follows from from \cite[Theorem 1]{GKRmathann} for $\alpha=0$ and from \cite[Theorem 8.1]{Koskela} (see also \cite[Theorem 5.27]{Cap}) for $\alpha=1$ respectively.
\begin{lemma}\label{lemb}
The space $X_\alpha$ is continuously and compactly embedded in $L^q(\Omega)$.
\end{lemma}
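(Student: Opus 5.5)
The plan is to treat the two values of $\alpha$ separately, since both the compactness and the admissible range of $q$ come from different Sobolev-type embeddings. By assumption, $(p,q)\in I_\alpha$, so $q<p_s^*$ when $\alpha=0$ and $q<p^*$ when $\alpha=1$.

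\emph{Case $\alpha=0$.} Here $X_0$ is the closure of $C_c^\infty(\Omega)$ in the Gagliardo seminorm $[\cdot]_{s,p}$. Every element of $X_0$ vanishes outside $\Omega$. We then invoke the fractional Sobolev inequality on $\mathbb H^n$ from \cite[Theorem 1]{GKRmathann}. This gives $\|u\|_{L^{p_s^*}(\mathbb H^n)}\le C[u]_{s,p}$, together with compactness of $X_0\hookrightarrow L^r(\Omega)$ for every $r\in[1,p_s^*)$. Since $\Omega$ is bounded, Hölder's inequality yields $\|u\|_{L^q(\Omega)}\le |\Omega|^{1/q-1/p_s^*}\|u\|_{L^{p_s^*}(\Omega)}$ when $q\le p_s^*$, which is the continuity. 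For compactness I take a bounded sequence in $X_0$. The compact embedding into $L^1(\Omega)$, or a Fréchet--Kolmogorov argument adapted to left translations on $\mathbb H^n$ as in \cite{GKRmathann}, gives a subsequence converging in $L^1(\Omega)$. I then upgrade to convergence in $L^q$ by interpolation between $L^1$ and $L^{p_s^*}$, using uniform boundedness in $L^{p_s^*}$ and $q<p_s^*$.

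\emph{Case $\alpha=1$.} By \eqref{locnonsem}, the norm $\|\cdot\|_{X_1}$ is equivalent to $[u]_{1,p}$. Hence $X_1$ coincides with the closure of $C_c^\infty(\Omega)$ in $HW^{1,p}$, i.e. $HW^{1,p}_0(\Omega)$, with equivalent norms. The Folland--Stein Sobolev inequality $\|u\|_{L^{p^*}}\le C\|\nabla_H u\|_{L^p}$ for $p<Q$ gives continuity into $L^{p^*}(\Omega)$, and therefore into $L^q(\Omega)$ by Hölder. The Rellich-type compactness of $HW^{1,p}_0(\Omega)\hookrightarrow L^r(\Omega)$ for $r<p^*$, from \cite[Theorem 8.1]{Koskela} or \cite[Theorem 5.27]{Cap}, gives compactness. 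Since functions in $HW^{1,p}_0(\Omega)$ extend by zero, no boundary regularity beyond boundedness of $\Omega$ is needed. Again, interpolation handles the passage to any $q<p^*$ if the cited result is stated only for a particular exponent.

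The main obstacle is the compactness in the purely nonlocal case. If one does not simply quote \cite{GKRmathann}, one must establish an $L^p$-equicontinuity estimate of the form $\|u(\cdot\circ h)-u\|_{L^p}\lesssim |h|_K^s[u]_{s,p}$. This requires averaging over balls in the Korányi metric and using left-invariance of Haar measure. Once that estimate is in hand, the Fréchet--Kolmogorov criterion on the bounded set $\Omega$ finishes the argument. Finally, the embedding ensures that all terms in Definitions \ref{def} and \ref{def2} are finite, which gives Remark \ref{rmkdef}.
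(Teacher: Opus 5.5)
Your proposal is correct and follows the paper's route. The paper likewise splits into $\alpha=0$ and $\alpha=1$ and quotes \cite[Theorem 1]{GKRmathann} and \cite[Theorem 8.1]{Koskela} (see also \cite[Theorem 5.27]{Cap}), while your H\"older, interpolation, and identification of $X_1$ with $HW^{1,p}_0(\Omega)$ via \eqref{locnonsem} and the Folland--Stein inequality only make explicit the routine details the paper leaves to those references.
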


Using Lemma \ref{lemb}, we have the following result.

\begin{lemma}\label{embd}
For every $u\in X_\alpha$, there exists a positive constant $C=C(r,p,Q,s)$ (for $\alpha=0$) and $C=C(r,p,Q)$ (for $\alpha=1$) such that
\begin{equation}\label{embeqn}
\left(\int_{\Omega}|u|^q\,dx\right)^\frac{1}{q}\leq C|\Omega|^{\frac{1}{q}-\frac{1}{l}}[u]_{X_\alpha}^p,
\end{equation}
where $l=p^*$ if $\alpha=1$ and $l=p_s^{*}$ if $\alpha=0$ respectively.
\end{lemma}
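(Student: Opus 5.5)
The plan is to combine Hölder's inequality on the bounded set $\Omega$ with the (critical) Sobolev inequality on $\mathbb H^n$ corresponding to each value of $\alpha$. Since $u\in X_\alpha$ is the $X_\alpha$-limit of functions in $C_c^\infty(\Omega)$, I will prove the estimate for $u\in C_c^\infty(\Omega)$, extended by zero to $\mathbb H^n$. For general $u$ it then follows by density together with Fatou's lemma, using the continuity of the embedding in Lemma \ref{lemb}.

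\textbf{Step 1 (Hölder).} Since $(p,q)\in I_\alpha$, we have $q<l$, where $l=p^*$ if $\alpha=1$ and $l=p_s^*$ if $\alpha=0$. Because $u$ vanishes outside $\Omega$, Hölder's inequality with exponents $l/q$ and $(l/q)'$ gives
\[
\Big(\int_\Omega |u|^q\,dx\Big)^{1/q}\le |\Omega|^{\frac1q-\frac1l}\Big(\int_{\mathbb H^n}|u|^l\,dx\Big)^{1/l}.
\]

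\textbf{Step 2 (Sobolev).} For $\alpha=1$, I invoke the Folland--Stein Sobolev inequality on $\mathbb H^n$ (the source cited for Lemma \ref{lemb}, namely \cite[Theorem 8.1]{Koskela} or \cite{Cap}). It gives $\|u\|_{L^{p^*}(\mathbb H^n)}\le C(p,Q)\,\|\nabla_H u\|_{L^p(\Omega)}$, and $\|\nabla_H u\|_{L^p(\Omega)}=[u]_{1,p}\le \|u\|_{X_1}$. For $\alpha=0$, I invoke the fractional Sobolev inequality on $\mathbb H^n$ underlying \cite[Theorem 1]{GKRmathann}. It gives $\|u\|_{L^{p_s^*}(\mathbb H^n)}\le C(p,Q,s)\,[u]_{s,p}=C\|u\|_{X_0}$, and this is where $sp<Q$ is used. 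Neither constant depends on $\Omega$, since both inequalities hold on the whole group; this matches the stated dependence of $C$.

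\textbf{Step 3 (combine).} Putting Steps 1 and 2 together yields $\|u\|_{L^q(\Omega)}\le C|\Omega|^{\frac1q-\frac1l}\|u\|_{X_\alpha}$.

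\textbf{Main obstacle.} The argument itself is routine; the only delicate point is justifying the whole-space critical Sobolev inequalities in the Heisenberg setting, and these I take from the cited references. I also note that both sides of the stated inequality must scale the same way under $u\mapsto tu$. The right-hand side $[u]_{X_\alpha}^p$ should therefore be read as the norm $\|u\|_{X_\alpha}$ to the first power, which is what the argument gives. Equivalently, raising to the $p$-th power gives $\|u\|_{L^q(\Omega)}^p\le C|\Omega|^{p(\frac1q-\frac1l)}\|u\|_{X_\alpha}^p$, and this is the form used when comparing with $\lambda^*$.
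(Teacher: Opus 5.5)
Your argument is correct and is essentially the paper's route. The paper offers no written proof beyond ``using Lemma \ref{lemb}'', and the sources cited there are the whole-space Folland--Stein and fractional Sobolev inequalities you invoke; combining them with H\"older on $\Omega$ is exactly what yields the factor $|\Omega|^{\frac1q-\frac1l}$ with an $\Omega$-independent constant.

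You are also right that the statement's right-hand side must be read as $\|u\|_{X_\alpha}$ to the first power. Under $u\mapsto tu$ the version with $[u]_{X_\alpha}^p$ fails for small $t$. The $p$-th power form you mention is how the lemma is actually used later, for example in the proof of Theorem \ref{regthm}.
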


For the following result, see \cite[Lemma B.1]{Stam}.
\begin{lemma}
Let $\phi(t)$, $k_0 \le t < \infty$, be a nonnegative and nonincreasing function such that
\[
\phi(h)\le \frac{C}{(h-k)^l}\,\phi(k)^m,
\qquad h>k>k_0,
\]
where $C,l,m$ are positive constants with $m>1$. Then
$
\phi(k_0+d)=0,
$ where $d$ is given by
$
d^l
=
C\,\phi(k_0)^{\,m-1}\,
2^{\frac{lm}{m-1}}.
$
\end{lemma}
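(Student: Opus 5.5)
We argue by the classical iteration over a geometric sequence of levels, in the spirit of De Giorgi--Stampacchia. First we dispose of the trivial case: if $\phi(k_0)=0$, then, since $\phi$ is nonnegative and nonincreasing, $\phi(t)=0$ for all $t\ge k_0$, and there is nothing to prove (here $d=0$). So we assume $\phi(k_0)>0$, hence $d>0$, and set
\[
r:=\frac{l}{m-1}>0,\qquad k_j:=k_0+d-\frac{d}{2^j},\quad j=0,1,2,\dots,
\]
so that $k_0=k_0$ (level $j=0$), $k_j$ is increasing, $k_{j+1}-k_j=d\,2^{-(j+1)}$, and $k_j\uparrow k_0+d$.

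The core step is the inductive claim
\[
\phi(k_j)\le \phi(k_0)\,2^{-jr}\qquad\text{for all } j\ge 0.
\]
The case $j=0$ is trivial. Assuming it for $j$, we apply the hypothesis with $h=k_{j+1}$ and $k=k_j$:
\[
\phi(k_{j+1})\le \frac{C\,2^{(j+1)l}}{d^l}\,\phi(k_j)^m\le \frac{C\,2^{(j+1)l}}{d^l}\,\phi(k_0)^m\,2^{-jrm}.
\]
Next we insert $d^l=C\phi(k_0)^{m-1}2^{rm}$ (note $lm/(m-1)=rm$). This gives $\phi(k_{j+1})\le \phi(k_0)\,2^{(j+1)l-(j+1)rm}$. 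Since $rm-l=\frac{lm-l(m-1)}{m-1}=r$, the exponent equals $-(j+1)r$, which closes the induction.

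The only delicate point is the step $j=0$: the hypothesis is stated for $k>k_0$ strictly, while $k_0$ is itself the first level. We handle it by applying the hypothesis with $k=k_0+\varepsilon$ and $h=k_1$ for small $\varepsilon>0$. Monotonicity gives $\phi(k_0+\varepsilon)\le\phi(k_0)$, and then letting $\varepsilon\downarrow0$ in $(k_1-k_0-\varepsilon)^{-l}$ yields the same bound. All later steps have $k_j>k_0$, so they are unaffected.

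Finally, since $k_j<k_0+d$ and $\phi$ is nonincreasing, for every $j$ we have
\[
0\le\phi(k_0+d)\le\phi(k_j)\le\phi(k_0)2^{-jr}.
\]
Letting $j\to\infty$ gives $\phi(k_0+d)=0$, as claimed.
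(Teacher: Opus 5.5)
Your proof is correct. It is the classical dyadic-level iteration from Kinderlehrer--Stampacchia (Lemma B.1), which the paper cites rather than reproves, and your $\varepsilon$-limiting argument for the first step correctly accommodates the strict requirement $k>k_0$ in the hypothesis as stated here.
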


For the following algebraic inequality, we refer to \cite[Lemma $2.1$]{Dama}.

\begin{lemma}\label{alg}
Suppose $1<p<\infty$. Let $k\in\mathbb{N}$ and $a,b\in\mathbb{R}^k$. Then there exists a positive constant $C=C(p)$ such that
\begin{equation*}
\langle |a|^{p-2}a-|b|^{p-2}b, a-b \rangle\geq C(|a|+|b|)^{p-2}|a-b|^2.
\end{equation*}
\end{lemma}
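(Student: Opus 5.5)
The statement is the algebraic inequality of Lemma~\ref{alg}. I will first reduce to a one-parameter monotonicity estimate along the segment joining $b$ to $a$. If $a=b$ there is nothing to prove. If $|a|+|b|=0$ and $a\neq b$ the case is vacuous. So assume $a\ne b$ and set $a_t=b+t(a-b)$ for $t\in[0,1]$. Off the origin the map $V(z)=|z|^{p-2}z$ is $C^1$ with
\[
DV(z)\xi\cdot\xi=|z|^{p-2}|\xi|^2+(p-2)|z|^{p-4}(z\cdot\xi)^2\ \ge\ \min\{1,p-1\}\,|z|^{p-2}|\xi|^2 .
\]
When the segment avoids the origin, the fundamental theorem of calculus gives
\[
\langle V(a)-V(b),a-b\rangle=\int_0^1 DV(a_t)(a-b)\cdot(a-b)\,dt\ \ge\ \min\{1,p-1\}\,|a-b|^2\int_0^1|a_t|^{p-2}\,dt .
\]
If the segment passes through $0$, this happens at a single parameter value, and the integrand $|a_t|^{p-2}$ is still integrable because $p>1$. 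The identity then follows by splitting the integral there, or by approximating $b$ by $b+\varepsilon e$ and passing to the limit.

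The heart of the proof is the estimate
\[
\int_0^1|b+t(a-b)|^{p-2}\,dt\ \ge\ c(p)\,(|a|+|b|)^{p-2}.
\]
I split into two cases.

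\emph{Case $p\ge2$.} Here $|a_t|^{p-2}$ is small only near the origin. Without loss of generality assume $|a|\ge|b|$, so $|a|\ge\tfrac12(|a|+|b|)$. For $t\in[3/4,1]$ we have
\[
|a_t|\ge|a|-(1-t)|a-b|\ge|a|-\tfrac14\cdot2|a|=\tfrac12|a|.
\]
Hence the integral is at least $\tfrac14(|a|/2)^{p-2}\ge c(|a|+|b|)^{p-2}$.

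\emph{Case $1<p<2$.} The exponent $p-2$ is negative, so I only need an upper bound on $|a_t|$. Trivially $|a_t|\le|a|+|b|$, so $|a_t|^{p-2}\ge(|a|+|b|)^{p-2}$ pointwise. This gives the integral bound with $c=1$.

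Combining the two cases yields the lemma with $C(p)=\min\{1,p-1\}\,c(p)$. The only delicate point I anticipate is justifying the integral identity when the segment crosses the origin and $p<2$, where $DV$ blows up. I would handle it by perturbation: the left-hand side is continuous in $(a,b)$, and the right-hand side $(|a|+|b|)^{p-2}|a-b|^2$ is continuous when $a\ne b$. It therefore suffices to prove the inequality for pairs whose segment avoids $0$, which are dense.
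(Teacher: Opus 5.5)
The paper gives no proof of this lemma; it cites Lemma 2.1 of Damascelli. Your argument is the standard proof of that inequality: represent $\langle V(a)-V(b),a-b\rangle$ as an integral along the segment, then compare $\int_0^1|b+t(a-b)|^{p-2}\,dt$ with $(|a|+|b|)^{p-2}$. Your pointwise bound $DV(z)\xi\cdot\xi\ge\min\{1,p-1\}|z|^{p-2}|\xi|^2$ is correct, and so are both cases of the integral estimate.

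One step is wrong as written: the density argument in your last paragraph fails when $k=1$. If $a>0>b$, every nearby pair $(a',b')$ still has $a'>0>b'$, so its segment still passes through the origin. Pairs whose segment avoids $0$ are dense in $\mathbb{R}^k\times\mathbb{R}^k$ only for $k\ge2$; for $k=1$ they form a set with nonempty open complement. The same objection applies to perturbing $b$ to $b+\varepsilon e$. This case matters, because the paper applies the lemma with $k=1$ to $J_p(U_k)-J_p(U_0)$.

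Use instead the splitting argument you mention earlier. If $a_{t_0}=0$, then on $[0,t_0-\delta]$ and $[t_0+\delta,1]$ the map $t\mapsto\langle V(a_t),a-b\rangle$ is $C^1$ with nonnegative derivative $\langle DV(a_t)(a-b),a-b\rangle$. Let $\delta\downarrow0$; continuity of $V$ at $0$ and monotone convergence give the integral identity, and the inequality is all you need anyway. Alternatively, regard $a,b\in\mathbb{R}$ as $(a,0),(b,0)\in\mathbb{R}^2$. Your constant $C(p)$ does not depend on $k$, so the case $k\ge2$, where density does hold, implies the case $k=1$.
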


Moreover, we have the following inequality from \cite[Lemma 2.22]{AMPjde}.
\begin{lemma}\label{alg1}
Let $\beta>0$. For every $x,y\geq 0$, one has
\[
(x-y)(x^\beta-y^\beta)
\geq
\frac{4\beta}{(\beta+1)^2}
\left(
x^{\frac{\beta+1}{2}}
-
y^{\frac{\beta+1}{2}}
\right)^2.
\]
\end{lemma}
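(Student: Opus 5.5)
We prove the inequality of Lemma~\ref{alg1} by reducing it to a one-variable estimate. Both sides are symmetric in $(x,y)$ and vanish when $x=y$. We may therefore assume $x>y\ge 0$.

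Set $m=\frac{\beta+1}{2}$, so that $\frac{4\beta}{(\beta+1)^2}=\frac{\beta}{m^2}$. The key device is to write each difference as an integral over $[y,x]$:
\[
x-y=\int_y^x dt,\qquad x^\beta-y^\beta=\beta\int_y^x t^{\beta-1}\,dt,\qquad x^{m}-y^{m}=m\int_y^x t^{m-1}\,dt .
\]
For $0<\beta<1$ the integrand $t^{\beta-1}$ is singular at $t=0$, but it is integrable, so the formula remains valid when $y=0$.

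Applying the Cauchy--Schwarz inequality to $t^{m-1}=1\cdot t^{(\beta-1)/2}$ gives
\[
\Big(\int_y^x t^{m-1}\,dt\Big)^2\le \int_y^x dt\int_y^x t^{\beta-1}\,dt=(x-y)\,\frac{x^\beta-y^\beta}{\beta}.
\]
Multiplying through by $m^2$ yields $(x^m-y^m)^2\le \frac{m^2}{\beta}(x-y)(x^\beta-y^\beta)$. This is exactly the claimed inequality, since $\frac{\beta}{m^2}=\frac{4\beta}{(\beta+1)^2}$.

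No step is a real obstacle. The only point needing care is the integrability of $t^{\beta-1}$ near $0$ when $\beta<1$ and $y=0$. It can be handled in either of two ways: note that the integral converges, or prove the inequality for $y>0$ and let $y\to0^+$ using continuity of both sides.
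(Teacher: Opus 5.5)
Your proof is correct and complete. With $m=\frac{\beta+1}{2}$ you have $t^{m-1}=1\cdot t^{(\beta-1)/2}$, so Cauchy--Schwarz on $[y,x]$ gives $(x^m-y^m)^2\le \frac{m^2}{\beta}(x-y)(x^\beta-y^\beta)$, and $\beta/m^2=4\beta/(\beta+1)^2$. The reduction to $x>y$ by symmetry, together with your remark on the integrability of $t^{\beta-1}$ at $0$ when $0<\beta<1$, covers the remaining cases.

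The paper gives no proof of this inequality; it simply imports it from \cite[Lemma~2.22]{AMPjde}. Your argument is therefore not a variant of the paper's reasoning but a self-contained replacement for the citation. The citation buys brevity. Your proof buys transparency about the constant:
\begin{itemize}
\item equality in Cauchy--Schwarz forces $t^{(\beta-1)/2}$ to be constant, i.e.\ $\beta=1$, where the inequality is an identity;
\item for $y>0$ and $x\to y$, both sides behave like $\beta y^{\beta-1}(x-y)^2$, so $\frac{4\beta}{(\beta+1)^2}$ is the best possible constant.
\end{itemize}
Your argument also covers the paper's later use of the lemma with $\beta=p-1$, since it holds for every $\beta>0$.
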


The following result will be crucial in our arguments of the proof of Theorem \ref{thm2}.

\begin{lemma}\label{evpthm}
There exists an eigenfunction 
$e_{1,\alpha}\in X_\alpha\cap L^{\infty}(\Om)$ corresponding to the eigenvalue 
$\lambda_{1,\alpha}$, which satisfies
\begin{equation}\label{eigeneqn}
 \mathcal{M}_\alpha\,e_{1,\alpha}=\lambda_{1,\alpha}|e_{1,\alpha}|^{p-2}e_{1,\alpha}
 \quad \text{in } \Om, \qquad e_{1,\alpha}>0\text{ in }\Om,\quad 
 e_{1,\alpha}=0 \quad \text{in } \mathbb{H}^n\setminus\Om
\end{equation}
such that for every $\omega\Subset\Omega$, there exists a positive constant $c(\omega)$ with the property $e_{1,\alpha}\geq c(\omega)>0$ on $\omega$.
\end{lemma}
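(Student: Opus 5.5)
The idea is to obtain $e_{1,\alpha}$ as a minimizer of the Rayleigh quotient in the special case $q=p$, and then read off its properties from Theorem~\ref{subopthm1} and Theorem~\ref{regthm}. When $q=p$ the normalization factor $\|u\|_{L^q(\Om)}^{p-q}$ in \eqref{evp} equals one, so \eqref{evp} reduces exactly to \eqref{eigeneqn}. Since $(p,p)\in I_\alpha$ (because $1<p<p^*$ and $1<p<p_s^*$), all earlier results apply with $q=p$.

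The steps, in order, are as follows.

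\textbf{Existence of a minimizer.} Set
\[
\lambda_{1,\alpha}:=\lambda^*=\inf\{\|u\|^p: u\in X_\alpha,\ \|u\|_{L^p(\Om)}=1\}.
\]
By Lemma~\ref{lemb}, $\lambda_{1,\alpha}>0$. Take a minimizing sequence and apply Theorem~\ref{subopthm1}. This gives a minimizer $u$ with $\|u\|_{L^p(\Om)}=1$ and $\|u\|^p=\lambda_{1,\alpha}$, and $(\lambda_{1,\alpha},u)$ is an eigenpair.

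\textbf{Reduction to a nonnegative minimizer.} I would replace $u$ by $|u|$. We have $\||u|\|_{L^p}=1$. Also $|\nabla_H|u||=|\nabla_H u|$ a.e., and $\big||u(x)|-|u(y)|\big|\le |u(x)-u(y)|$ pointwise. Hence $\||u|\|\le\|u\|$, so $|u|$ is again a minimizer. By the last assertion of Theorem~\ref{subopthm1}, minimizers are eigenfunctions for $\lambda_{1,\alpha}$. Therefore $e_{1,\alpha}:=|u|\ge0$ satisfies \eqref{mwksol} with $q=p$, which is \eqref{eigeneqn} in weak form. The vanishing condition in $\mathbb H^n\setminus\Om$ holds because $X_\alpha$ is the closure of $C_c^\infty(\Om)$.

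\textbf{Regularity and positivity.} Theorem~\ref{regthm}(a) gives $e_{1,\alpha}\in L^\infty(\Om)$. Since $e_{1,\alpha}\ge0$ and $e_{1,\alpha}\neq0$, Theorem~\ref{regthm}(b) yields $e_{1,\alpha}>0$ in $\Om$. It also gives, for every $\omega\Subset\Om$, a constant $c(\omega)>0$ with $e_{1,\alpha}\ge c(\omega)$ on $\omega$.

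The only delicate point is the claim that $|u|$ is again an eigenfunction. This rests on the characterization in Theorem~\ref{subopthm1} that every minimizer solves the Euler--Lagrange equation, which I take as given. If one prefers a direct argument, compute the Gâteaux derivative of $v\mapsto\|v\|^p-\lambda_{1,\alpha}\|v\|_{L^p}^p$ at the minimizer $|u|$ along arbitrary $\phi\in X_\alpha$. Differentiability holds since $p>1$, and setting the derivative to zero gives \eqref{mwksol}. Everything else is an immediate invocation of earlier results.
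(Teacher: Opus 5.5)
Your proposal is correct and takes the same route as the paper: the paper also obtains $e_{1,\alpha}$ from Theorem~\ref{subopthm1} with $q=p$ and Theorem~\ref{regthm}, except that for $\alpha=0$ it cites the external results \cite{GKRmathann} and \cite{GMBruno} instead of its own theorems. Your explicit replacement of the minimizer $u$ by $|u|$ fills in a step the paper leaves implicit, namely the nonnegativity needed to invoke Theorem~\ref{regthm}(b).
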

\begin{proof}
For $\alpha=1$, see Theorem \ref{subopthm1} and Theorem \ref{regthm}, and for $\alpha=0$, we refer to \cite[Theorem 2 and Theorem 4]{GKRmathann} and \cite[Theorem 3.7]{GMBruno}.
\end{proof}

The following result follows from \cite[Lemma 3.4]{G1} and \cite[Lemma 3.4]{G2}.

\begin{lemma}\label{auxresult}
Let $\alpha\in\{0,1\}$ and $g \in L^{\infty}(\Omega) \setminus \{0\}$ be a nonnegative function in $\Omega$.  
Then there exists a unique solution 
\[
u \in HW_0^{1,p}(\Omega) \cap L^{\infty}(\Omega)
\] 
to the problem
\begin{equation}\label{approxnew}
\mathcal{M}_\alpha u = g \quad \text{in } \Omega, \qquad
u > 0 \ \text{in } \Omega, \qquad
u = 0 \ \text{in } \mathbb{H}^n \setminus \Omega.
\end{equation}
Moreover, for every $\omega \Subset \Omega$, there exists a constant $C(\omega) > 0$ such that 
$
u \ge C(\omega) \quad \text{in } \omega.
$
\end{lemma}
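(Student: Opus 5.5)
The statement is Lemma \ref{auxresult}. I would prove it by the monotone operator method, handling existence and uniqueness first and then positivity and boundedness.

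\textbf{Existence and uniqueness.} Define $A:X_\alpha\to X_\alpha^*$ by
\[
\langle A(u),\phi\rangle=\alpha\int_\Omega|\nabla_H u|^{p-2}\nabla_H u\nabla_H\phi\,dx+\int_{\mathbb{H}^n}\int_{\mathbb{H}^n}J_p(u(x)-u(y))(\phi(x)-\phi(y))\,d\nu .
\]
By H\"older's inequality, $A$ is bounded and continuous. It is coercive, since $\langle A(u),u\rangle=\|u\|^p$. It is strictly monotone by Lemma \ref{alg}, applied both to the gradient term and to the scalar differences in the nonlocal term.

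Since $g\in L^\infty(\Omega)$, the map $\phi\mapsto\int_\Omega g\phi\,dx$ belongs to $X_\alpha^*$ by Lemma \ref{lemb}. Theorem \ref{MBthm} then gives a unique weak solution $u\in X_\alpha$.

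\textbf{Nonnegativity.} Test the equation with $u^-$. The local term gives $-\int_\Omega|\nabla_H u^-|^p\,dx$. For the nonlocal term, the elementary inequality $J_p(a-b)(a^--b^-)\le-|a^--b^-|^p$ holds. Since $g\ge0$, this yields $\|u^-\|^p\le-\int_\Omega g u^-\,dx\le0$, so $u\ge0$.

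\textbf{Boundedness.} I would use Stampacchia's method. Test with $(u-k)^+$ for $k\ge0$. The nonlocal term is bounded below by $[(u-k)^+]_{s,p}^p$, and the local term (when $\alpha=1$) equals $\|\nabla_H(u-k)^+\|_p^p$. Writing $A_k=\{u>k\}$, the right-hand side is at most $\|g\|_\infty\int_{A_k}(u-k)\,dx$.

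Combining this with the Sobolev embedding (with exponent $l=p^*$ or $p_s^*$) and H\"older's inequality gives, for $h>k$,
\[
(h-k)|A_h|^{1/l}\le C\Big(\int_{A_k}(u-k)\Big)^{1/p}\cdot\ldots,
\]
which leads to an inequality of the form $|A_h|\le C(h-k)^{-l'}|A_k|^{m}$ with $m>1$, because $l>p$. The Stampacchia lemma then gives $u\le k_0+d$. Hence $u\in L^\infty(\Omega)$. When $\alpha=1$, $X_1$ coincides with $HW_0^{1,p}(\Omega)$ up to an equivalent norm, so $u$ lies in the stated space.

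\textbf{Positivity and local lower bounds.} This is the main obstacle. Since $g\ge0$ and $g\not\equiv0$, $u$ is a nontrivial weak supersolution of $\mathcal{M}_\alpha u\ge0$. I would invoke a weak Harnack inequality for nonnegative supersolutions of the fractional $p$-sub-Laplacian (for $\alpha=0$) and of the mixed operator (for $\alpha=1$) in the Heisenberg group. This gives $\inf_{B_r}u\ge c\big(\fint_{B_{2r}}u^t\big)^{1/t}$ on balls, where the tail term has a favourable sign because $u\ge0$ everywhere.

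If $u$ vanished on a set of positive measure in some ball, the covering and connectedness argument would force $u\equiv0$, contradicting $g\not\equiv0$. Therefore $u>0$, and covering $\omega\Subset\Omega$ by finitely many balls yields $u\ge C(\omega)>0$ on $\omega$. The availability of this weak Harnack inequality is the step I expect to rely on the cited works \cite{G1,G2}.
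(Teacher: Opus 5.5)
Your proof is correct, but it takes a different route from the paper. The paper gives no argument for Lemma \ref{auxresult} at all; it imports the whole statement (existence, uniqueness, boundedness, interior lower bounds) from \cite[Lemma 3.4]{G1} and \cite[Lemma 3.4]{G2}.

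You instead build the lemma from tools the paper uses elsewhere:
\begin{itemize}
\item Theorem \ref{MBthm} with strict monotonicity from Lemma \ref{alg}, for existence and uniqueness, exactly as in the proof of $(H_5)$ in Lemma \ref{auxlmab}.
\item The test function $u^-$, for nonnegativity. Your elementary inequality $J_p(a-b)(a^- - b^-)\le -|a^- - b^-|^p$ is correct.
\item A Stampacchia level-set argument, for boundedness, in the spirit of Theorem \ref{regthm}(a).
\item A weak Harnack inequality, or strong minimum principle, for nonnegative supersolutions, for strict positivity and the local lower bounds.
\end{itemize}
For the last step you can use the same results the paper relies on in Theorem \ref{regthm}(b), namely \cite[Theorem 3.7]{GMBruno} for $\alpha=0$ and \cite[Theorem 1.4]{Mjde} for $\alpha=1$. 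There they are applied to nonnegative eigenfunctions, which are nonnegative supersolutions just as your $u$ is, since $g\ge 0$.

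What your route buys is transparency. The only deep external input is the weak Harnack inequality, and everything else is checked in a few lines. The paper's route is shorter but hides which ingredients are actually needed.

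Two small points. First, in the level-set estimate the correct form is $|A_h|\le C(h-k)^{-l}|A_k|^{m}$ with $m=\frac{l-1}{p-1}$, not $(h-k)^{-l'}$. This is harmless, since any positive power of $(h-k)$ is allowed in Stampacchia's lemma, and $m>1$ precisely because $l>p$.

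Second, for $\alpha=0$ your argument yields $u\in X_0\cap L^\infty(\Omega)$, not $u\in HW_0^{1,p}(\Omega)$. That is the right reading of the statement: horizontal $W^{1,p}$ regularity up to the boundary is not to be expected for the purely fractional problem. It is also how the lemma is used in Lemma \ref{Solution}, where $\xi\in X_\alpha\cap L^\infty(\Omega)$.
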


\begin{lemma}\label{tlem}
Let $u\in X_\alpha$ be a weak solution of problem \eqref{meqn}. Then for every $\phi\in X_\alpha$, the equality \eqref{wksoleqn} holds.
\end{lemma}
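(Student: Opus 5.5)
The idea is to extend the weak formulation \eqref{wksoleqn} from $C_c^\infty(\Om)$ to all of $X_\alpha$ by density. The delicate term is the singular one, $\mu\int_\Om u^{-\gamma(x)}\phi\,dx$, which needs not be finite a priori for a general $\phi\in X_\alpha$. The plan has two steps: first a one-sided estimate for nonnegative test functions, then the identity itself.

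\textbf{Step 1: the singular term is integrable.} Fix $\phi\in X_\alpha$ with $\phi\geq 0$. Choose $\phi_k\in C_c^\infty(\Om)$ with $\phi_k\to\phi$ in $X_\alpha$. I would replace them by nonnegative approximants $\psi_k$, for instance $\psi_k=\min\{\phi_k^+,\phi\}$ after a mollification, or simply a standard nonnegative approximating sequence that also converges a.e.

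In \eqref{wksoleqn} with $\psi_k$, every term except the singular one is controlled:
\begin{itemize}
\item the local and nonlocal terms are bounded by $\|u\|^{p-1}\|\psi_k\|$, by H\"older's inequality;
\item the term $\int u^\beta\psi_k$ is bounded via Lemma \ref{lemb}, since $\beta+1<l$ where $l=p^*$ or $p_s^*$.
\end{itemize}
Hence $\mu\int_\Om u^{-\gamma(x)}\psi_k\,dx\le C$ uniformly in $k$. By Fatou's lemma, $u^{-\gamma(x)}\phi\in L^1(\Om)$, and the equation holds with ``$\ge$'' (an inequality for the limits) for nonnegative $\phi$.

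\textbf{Step 2: the identity.} For general $\phi\in X_\alpha$, split $\phi=\phi^+-\phi^-$. By Step 1, $u^{-\gamma}|\phi|\in L^1$. Now take $\phi_k\in C_c^\infty(\Om)$ with $\phi_k\to\phi$ in $X_\alpha$, and arrange a.e. convergence together with domination $|\phi_k|\le g$ for some $g\in X_\alpha$ with $u^{-\gamma}g\in L^1$. A natural choice is $g=|\phi|+\sum_k 2^{-k}|\phi_k-\phi|$ along a fast subsequence with $\sum_k\|\phi_k-\phi\|<\infty$, so that $g\in X_\alpha$ and $g\ge0$; Step 1 then gives $u^{-\gamma}g\in L^1$.

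Dominated convergence passes the singular term to the limit. The left-hand side converges by the continuity of the linear functionals $\phi\mapsto\int|\nabla_H u|^{p-2}\nabla_H u\nabla_H\phi$ and $\phi\mapsto\iint J_p(u(x)-u(y))(\phi(x)-\phi(y))\,d\nu$ on $X_\alpha$. The $u^\beta$ term converges by the compact embedding of Lemma \ref{lemb}.

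\textbf{Main obstacle.} The key difficulty is Step 1, specifically obtaining nonnegative smooth compactly supported approximants of a nonnegative $\phi\in X_\alpha$ (which is needed for Fatou's lemma).

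If that construction is troublesome, I would instead apply the equation to test functions of the form $\min\{\phi,\,\eta_k\}$ with cutoffs supported in compact sets $\omega_k\Subset\Om$. On each $\omega_k$ we have $u\ge c(\omega_k)$, so there the singular term is bounded. Such truncations are admissible because $u^{-\gamma}$ is bounded on their support; they approximate from $C_c^\infty$ via mollification. Letting $\omega_k\uparrow\Om$ and applying monotone convergence then gives the $L^1$ bound.
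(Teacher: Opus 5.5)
Your main line is correct and rests on the same idea as the paper. You test \eqref{wksoleqn} with nonnegative $C_c^\infty(\Om)$ approximants, control the singular term by Fatou's lemma, and then extend by density. The difference is in Step 2.

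\textbf{Step 2: the paper's shortcut.} The paper notes that the Fatou argument already yields a linear bound,
$\int_\Om(\mu u^{-\gamma(x)}+u^\beta)|\psi|\,dx\le C\|u\|^{p-1}\|\psi\|$ for all $\psi\in X_\alpha$. In your Step 1 you only need to track the constant to get $\mu\int_\Om u^{-\gamma(x)}\psi\,dx\le\|u\|^{p-1}\|\psi\|$ for $\psi\ge0$. Applying this bound to $\psi=\phi_k-\phi$ gives continuity of $\phi\mapsto\int_\Om(\mu u^{-\gamma(x)}+u^\beta)\phi\,dx$ on $X_\alpha$ directly. So in Step 2 neither a.e.\ convergence of $\phi_k$ nor a majorant is needed.

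\textbf{Your majorant.} Your route extracts only integrability from Fatou and then needs dominated convergence. That works, but your majorant is wrong as written. Since $|\phi_k|\le|\phi|+|\phi_k-\phi|$, the function $g=|\phi|+\sum_k2^{-k}|\phi_k-\phi|$ does not dominate $|\phi_k|$. Take $g=|\phi|+\sum_k|\phi_k-\phi|$ instead. It belongs to $X_\alpha$ exactly because $\sum_k\|\phi_k-\phi\|<\infty$ and $\|\,|v|\,\|\le\|v\|$.

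\textbf{Your flagged obstacle.} The paper simply asserts that there exist $0\le\psi_k\in C_c^\infty(\Om)$ with $\psi_k\to|\psi|$ in $X_\alpha$ and a.e. This is standard:
\begin{itemize}
\item approximate by $\phi_k\in C_c^\infty(\Om)$ with $\phi_k\to\phi$ in $X_\alpha$;
\item use continuity of $v\mapsto v^+$ on $X_\alpha$ (for the nonlocal part via $|a^+-b^+|\le|a-b|$ and Vitali);
\item mollify the compactly supported Lipschitz functions $\phi_k^+$;
\item extract an a.e.\ convergent subsequence via Lemma \ref{lemb}.
\end{itemize}
You should drop your fallback. For cutoffs $0\le\eta_k\le1$, $\min\{\phi,\eta_k\}$ increases to $\min\{\phi,1\}$, not to $\phi$. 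You also give no uniform $X_\alpha$-bound on these truncations, and the $L^1$ estimate needs one.
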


\begin{proof}
We prove the result only for $\alpha=0$, since the result for $\alpha=1$ is similar. Let $u\in X_0$ be a weak solution of\eqref{meqn}. Suppose $f(x,u)=\mu\,u^{-\gamma(x)}+u^\beta$. Then for every $\phi\in C_c^{\infty}(\Omega)$, we have
\begin{equation}\label{t1}
\int_{\mb{H}^n}\int_{\mb{H}^n}J_p(u(x)-u(y))(\phi(x)-\phi(y))\,d\nu=\int_{\Omega}f(x,u)\phi\,dx.
\end{equation}
Let $\psi\in X_0$, then there exists a sequence
$\{\psi_k\}_{k\in\mb{N}}\subset C_c^{\infty}(\Omega)$ such that
$
0\leq \psi_k\longrightarrow |\psi|
\text{ strongly in }X_0
$
and pointwise almost everywhere in $\Om$. We observe that
\begin{equation*}
\begin{split}
\int_{\Omega}f(x,u)\psi\,dx\leq
\int_{\Omega}f(x,u)|\psi|\,dx\leq
\liminf_{k\to\infty}
\int_{\Omega}f(x,u)\psi_k\,dx&\leq \liminf_{k\to\infty}
\langle-\mathcal{M}_0\,u,\psi_k\rangle\\&\leq C\|u\|_{X_0}^{p-1}
\lim_{k\to\infty}\|\psi_k\|_{X_0}\\
&\leq
C\|u\|_{X_0}^{p-1}\|\,|\psi|\,||_{X_0}\\
&\leq
C\|u\|_{X_0}^{p-1}\|\psi\|_{X_0},
\end{split}
\end{equation*}
for some constant $C>0$. Let $\phi\in X_0$, then there exists a sequence $\{\phi_k\}_{k\in\mb{N}}\subset C_c^{\infty}(\Om)$ such that $\phi_k\to \phi$ strongly in $X_0$. Then by choosing $\psi=\phi_k-\phi$ in the inequality above, we obtain
\begin{equation}\label{t2}
\lim_{k\to\infty}\int_{\Om}f(x,u)\phi_k\,dx=\int_{\Om}f(x,u)\phi\,dx.
\end{equation}
Moreover, since $\phi_k\to\phi$ strongly in $X_0$, we have
\begin{equation}\label{t3}
\begin{split}
\lim_{k\to\infty}
\int_{\mb{H}^n}\int_{\mb{H}^n}
J_p(u(x)-u(y))(\phi_k(x)-\phi_k(y))\,d\nu&=
\int_{\mb{H}^n}\int_{\mb{H}^n}
J_p(u(x)-u(y))(\phi(x)-\phi(y))\,d\nu. 
\end{split}
\end{equation}
Using \eqref{t2} and \eqref{t3} into \eqref{t1}, the result follows.
\end{proof}

\section{Nonlinear $(p,q)$-eigenvalue problem}
\subsection{Preliminaries}
In this subsection, we establish some preliminary results that are crucial to prove our main results related to the nonlinear $(p,q)$-eigenvalue problem \eqref{evp}. To this end, for $\alpha\in\{0,1\}$, we define the operator $A:X_\alpha\to X_\alpha^*$ by
\begin{equation}
\label{a}
\begin{split}
\langle Av,w\rangle:&=\alpha\int_{\Omega}|\nabla_H \,v|^{p-2}\nabla_H\,v\nabla_H \,w\,dx+\int_{\mathbb{H}^n}\int_{\mathbb{H}^n}{J_p(v(x)-v(y))(w(x)-w(y))}\,d\nu,
\end{split}
\end{equation}
and $B:L^q(\Omega)\to (L^q(\Omega))^*$ by
\begin{equation}\label{b}
\begin{split}
\langle B(v),w\rangle:=\int_{\Omega}|v|^{q-2}vw\,dx.
\end{split}
\end{equation}
The symbols $X_\alpha^*$ and $(L^q(\Omega))^*$ denotes the dual of $X_\alpha$ and $L^q(\Omega)$ respectively. First, we have the following result.
\begin{lemma}\label{newlem}
The following results hold:
\begin{enumerate}
\item[$(i)$] The operators $A$ defined by \eqref{a} and $B$ defined by \eqref{b} are continuous.
\item[$(ii)$] Moreover, $A$ is bounded, coercive and monotone.
\end{enumerate}
\end{lemma}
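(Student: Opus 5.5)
We treat the two operators separately, splitting $A$ into its local part $A_1$ (present only when $\alpha=1$) and its nonlocal part $A_2$, and use H\"older's inequality throughout.

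\emph{Boundedness.} For $v,w\in X_\alpha$, H\"older's inequality with exponents $p'$ and $p$ applied to $A_1$ (with respect to $dx$) and to $A_2$ (with respect to $d\nu$) gives
\[
|\langle Av,w\rangle|\le \alpha[v]_{1,p}^{p-1}[w]_{1,p}+[v]_{s,p}^{p-1}[w]_{s,p}\le 2\|v\|_{X_\alpha}^{p-1}\|w\|_{X_\alpha}.
\]
Hence $\|Av\|_{X_\alpha^*}\le 2\|v\|_{X_\alpha}^{p-1}$, so $A$ maps bounded sets to bounded sets. In the same way,
\[
|\langle B(v),w\rangle|\le\|v\|_{L^q(\Omega)}^{q-1}\|w\|_{L^q(\Omega)}.
\]

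\emph{Coercivity.} Taking $w=v$ gives $\langle Av,v\rangle=\|v\|_{X_\alpha}^p$, so $\langle Av,v\rangle/\|v\|_{X_\alpha}=\|v\|_{X_\alpha}^{p-1}\to\infty$ as $\|v\|_{X_\alpha}\to\infty$, since $p>1$.

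\emph{Monotonicity.} For $v,w\in X_\alpha$, we have $\langle Av-Aw,v-w\rangle$ equal to $\alpha\int_\Omega\langle|\nabla_Hv|^{p-2}\nabla_Hv-|\nabla_Hw|^{p-2}\nabla_Hw,\nabla_Hv-\nabla_Hw\rangle\,dx$ plus $\iint (J_p(a)-J_p(b))(a-b)\,d\nu$, where $a=v(x)-v(y)$ and $b=w(x)-w(y)$. Lemma \ref{alg}, applied with $k=2n$ to the first integrand and with $k=1$ to the second, shows that both integrands are nonnegative. Hence $A$ is monotone, and in fact strictly monotone.

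\emph{Continuity.} This is the main step. Let $v_j\to v$ in $X_\alpha$. Then $\nabla_Hv_j\to\nabla_Hv$ in $L^p(\Omega)$, and $V_j(x,y):=v_j(x)-v_j(y)\to V:=v(x)-v(y)$ in $L^p(d\nu)$. Pass to a subsequence converging a.e. that admits a dominating function $g\in L^p$, by the partial converse of dominated convergence. Then $|\nabla_Hv_j|^{p-2}\nabla_Hv_j\to|\nabla_Hv|^{p-2}\nabla_Hv$ a.e., and these are dominated by $g^{p-1}\in L^{p'}$. By dominated convergence, this convergence holds in $L^{p'}(\Omega)$. The same argument gives $J_p(V_j)\to J_p(V)$ in $L^{p'}(d\nu)$. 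By H\"older's inequality,
\[
\|Av_j-Av\|_{X_\alpha^*}\le\alpha\big\||\nabla_Hv_j|^{p-2}\nabla_Hv_j-|\nabla_Hv|^{p-2}\nabla_Hv\big\|_{L^{p'}(\Omega)}+\|J_p(V_j)-J_p(V)\|_{L^{p'}(d\nu)}\to0.
\]
Since every subsequence has a further subsequence along which this holds, the whole sequence converges. The argument for $B$ is identical: $v_j\to v$ in $L^q(\Omega)$ implies $|v_j|^{q-2}v_j\to|v|^{q-2}v$ in $L^{q'}(\Omega)$, and therefore $\|B(v_j)-B(v)\|_{(L^q(\Omega))^*}\to0$.

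Continuity is the only delicate point, and the reduction above to dominated convergence with a common $L^p$ majorant handles it uniformly for the local term, the nonlocal term and $B$.
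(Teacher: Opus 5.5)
Your proof is correct. For boundedness, coercivity and monotonicity it follows the paper: H\"older on each piece, $\langle Av,v\rangle=\|v\|_{X_\alpha}^p$, and Lemma~\ref{alg} applied pointwise. Continuity is where your route genuinely differs.

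\textbf{Continuity.} The paper only uses two facts: $|\nabla_H v_k|^{p-2}\nabla_H v_k$ and $J_p(V_k)$ are bounded in $L^{p'}$, and they converge a.e.\ along subsequences. Hence they converge weakly in $L^{p'}$, and the paper concludes $\langle Av_k,w\rangle\to\langle Av,w\rangle$ for each fixed $w$. That is demicontinuity. It is what a Minty--Browder-type argument actually uses, and it needs nothing beyond the uniform bound and the identification of the limit.

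You instead extract a common $L^p$ majorant via the partial converse of dominated convergence. This upgrades a.e.\ convergence to strong convergence in $L^{p'}$. You therefore obtain norm continuity of $A:X_\alpha\to X_\alpha^*$ and of $B:L^q(\Omega)\to(L^q(\Omega))^*$, which is the literal content of $(i)$, at essentially no extra cost. You also write out the case of $B$, which the paper dismisses with ``similarly''.

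\textbf{The constant in the boundedness estimate.} The paper asserts $\langle Av,w\rangle\le\|v\|_{X_\alpha}^{p-1}\|w\|_{X_\alpha}$ with constant $1$ and reuses it in $(H_3)$ of Lemma~\ref{auxlmab}. That bound is true, by the discrete H\"older inequality
$a^{1/p'}b^{1/p}+c^{1/p'}d^{1/p}\le(a+c)^{1/p'}(b+d)^{1/p}$, where $a,c$ are the $p$-th powers of the two seminorms of $v$ and $b,d$ those of $w$. However, the last step of \eqref{mest} as written uses the reversed inequality $a^{1/p'}+c^{1/p'}\le(a+c)^{1/p'}$, which fails. So your cruder constant $2$ is what the displayed argument actually justifies, and it suffices for this lemma.

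\textbf{Strict monotonicity.} Your aside that $A$ is strictly monotone is true but needs one more line. Vanishing of the nonlocal integrand forces $v-w$ to be a.e.\ constant on $\mathbb{H}^n$, and that constant is zero because $v-w$ vanishes outside $\Omega$.
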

\begin{proof}
\begin{enumerate}
\item[$(i)$] \textbf{Continuity:} Suppose $\{v_k\}_{k\in\mb{N}}\in X_\alpha$ such that $v_k\to v$ in the norm of $X_\alpha$. Thus, up to a subsequence $\nabla_H\,v_{k}\to \nabla_H \,v$ in $\Om$. We observe that 
\begin{equation}\label{mfd}
\||\nabla_H\,v_{k}|^{p-2}\nabla_H\, v_k\|_{L^\frac{p}{p-1}(\Om)}\leq \|\nabla_H\,v_{k}\|^{p-1}_{L^p(\Omega)}\leq c, 
\end{equation}
for some constant $c>0$, which is independent of $n$. Thus, up to a subsequence, we have
\begin{equation}\label{fc}
|\nabla_H\,v_{n}|^{p-2}\nabla_H\,v_{n}\to |\nabla_H \,v|^{p-2}\nabla_H\, v\text{ weakly in }L^{p'}(\Om).
\end{equation}
Moreover, up to a subsequence, we have
\begin{equation}\label{mffc}
\frac{J_p(v_k(x)-v_k(y))}{|y^{-1}\circ x|_{K}^\frac{Q+ps}{p'}}\to\frac{J_p(v(x)-v(y))}{|y^{-1}\circ x|_{K}^\frac{Q+ps}{p'}} 
\end{equation}
weakly in $L^{p'}(\mathbb{H}^{2n})$. Since, the weak limit is independent of the choice of the subsequence, as a consequence of \eqref{fc} and \eqref{mffc}, we have 
$$
\lim_{k\to\infty}\langle Av_k,w\rangle=\langle Av,w\rangle
$$
for every $w\in X_\alpha$. Thus $A$ is continuous. Similarly, we obtain $B$ is continuous.

\item[$(ii)$] \textbf{Boundedness:} Let $\alpha=1$.
Using Cauchy-Schwartz and H\"older's inequality, we observe that
\begin{equation}\label{mest}
\begin{split}
\langle Av,w\rangle&=\int_{\Om}|\nabla_H\,v|^{p-2}\nabla_H\, v\nabla_H\, w\,dx\\
&\quad\quad+\int_{\mathbb{H}^n}\int_{\mathbb{H}^n}{|v(x)-v(y)|^{p-2}(v(x)-v(y))(w(x)-w(y))}\,d\nu\\
&\leq\int_{\Om}|\nabla_H\, v|^{p-1}|\nabla_H\, w|\,dx+\int_{\mathbb{H}^n}\int_{\mathbb{H}^n}{|v(x)-v(y)|^{p-1}|w(x)-w(y)|}\,d\nu\\
&\leq\Big(\int_{\Om}|\nabla_H\, v|^p\,dx\Big)^\frac{p-1}{p}\Big(\int_{\Om}|\nabla_H\, w|^p\,dx\Big)^\frac{1}{p}\\
&\quad\quad+\Big(\int_{\mathbb{H}^n}\int_{\mathbb{H}^n}{|v(x)-v(y)|^p}\,d\nu\Big)^\frac{p-1}{p}\Big(\int_{\mathbb{H}^n}\int_{\mathbb{H}^n}{|w(x)-w(y)|^p}\,d\nu\Big)^\frac{1}{p}\\
&\leq\Bigg[\Big(\int_{\Om}|\nabla_H\, v|^p\,dx\Big)^\frac{p-1}{p}+\Big(\int_{\mathbb{H}^n}\int_{\mathbb{H}^n}{|v(x)-v(y)|^p}\,d\nu\Big)^\frac{p-1}{p}\Bigg]\|w\|_{X_\alpha}\\
&\leq\Big(\int_{\Om}|\nabla_H\, v|^p\,dx+\int_{\mathbb{H}^n}\int_{\mathbb{H}^n}{|v(x)-v(y)|^p}\,d\nu\Big)^\frac{p-1}{p}\|w\|_{X_\alpha}=\|v\|_{X_\alpha}^{p-1}\|w\|_{X_\alpha}.
\end{split}
\end{equation}
Therefore, we have
$$
\|Av\|_{X_\alpha^*}=\sup_{\|w\|_{X_\alpha}\leq 1}|\langle Av,w\rangle|\leq\|v\|_{X_\alpha}^{p-1}\|w\|_{X_\alpha}\leq\|v\|^{p-1}_{X_\alpha}.
$$
The same estimate above can be proved when $\alpha=0$ by exactly in the same way.
Thus, $A$ is bounded. 

\noindent
\textbf{Coercivity:}  We observe that 
$$
\langle Av,v\rangle=\alpha\int_{\Om}|\nabla_H\, v|^p\,dx+\int_{\mathbb{H}^n}\int_{\mathbb{H}^n}{|v(x)-v(y)|^p}\,d\nu=\|v\|_{X_\alpha}^p.
$$
Since $p>1$, we have $A$ is coercive.\\

\noindent
\textbf{Monotonicity:} We recall the notation for $d\nu$ and $J_p$. Thus, for every $v,w\in X_\alpha$, using Lemma \ref{alg}, we have
\begin{equation*}
\begin{split}
\langle Av-Aw,v-w\rangle&
=\alpha\int_{\Om}\langle|\nabla_H\, v|^{p-2}\nabla_H\, v-|\nabla_H\,w|^{p-2}\nabla_H\,w,\nabla_H\,(v-w)\rangle\,dx\\
&\quad+\int_{\mathbb{H}^n}\int_{\mathbb{H}^n}\Big(J_p(v(x)-v(y))-J_p(w(x)-w(y))\Big)((v(x)-v(y))-(w(x)-w(y)))\,d\nu\\
&\qquad\geq 0.
\end{split}
\end{equation*}
Therefore, $A$ is monotone.
\end{enumerate}
\end{proof}

\begin{lemma}\label{auxlmab}
The operators $A$ defined by \eqref{a} and $B$ defined by \eqref{b} satisfy the following properties: 
\begin{enumerate}
\item[$(H_1)$] $A(tv)=|t|^{p-2}tA(v)\quad\forall t\in\mathbb{R}\quad \text{and}\quad\forall v\in X_\alpha$.

\item[$(H_2)$] $B(tv)=|t|^{q-2}tB(v)\quad\forall t\in\mathbb{R}\quad \text{and}\quad\forall v\in L^q(\Omega)$.

\item[$(H_3)$] $\langle A(v),w\rangle\leq\|v\|_{X_\alpha}^{p-1}\|w\|_{X_\alpha}$ for all $v,w\in X_\alpha$, where the equality holds if and only if $v=0$ or $w=0$ or $v=t w$ for some $t>0$.

\item[$(H_4)$] $\langle B(v),w\rangle\leq\|v\|_{L^q(\Omega)}^{q-1}\|w\|_{L^q(\Omega)}$ for all $v,w\in {L^q(\Omega)}$, where the equality holds if and only if $v=0$ or $w=0$ or $v=t w$ for some $t\geq 0$.

\item[$(H_5)$] For every $w\in L^q(\Omega)\setminus\{0\}$ there exists $u\in X_\alpha\setminus\{0\}$ such that
$$
\langle A(u),v\rangle=\langle B(w),v\rangle\quad\forall\quad v\in X_\alpha.
$$
\end{enumerate}
\end{lemma}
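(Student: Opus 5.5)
We verify the five properties one at a time, working directly from the definitions \eqref{a} and \eqref{b}.

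\textbf{Homogeneity $(H_1)$, $(H_2)$.} These follow by direct substitution. For $t\in\mathbb{R}$ we have $|\nabla_H(tv)|^{p-2}\nabla_H(tv)=|t|^{p-2}t\,|\nabla_H v|^{p-2}\nabla_H v$ and $J_p(t a)=|t|^{p-2}t\,J_p(a)$. Inserting these into \eqref{a} gives $A(tv)=|t|^{p-2}tA(v)$. The same computation with $|tv|^{q-2}tv$ in \eqref{b} gives $(H_2)$.

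\textbf{The inequalities $(H_3)$, $(H_4)$.} The inequality in $(H_3)$ is exactly the estimate \eqref{mest}, which already holds for both $\alpha\in\{0,1\}$. To characterize equality, I would retrace the chain in \eqref{mest} and record when each step is sharp.
\begin{itemize}
\item[1.] The pointwise Cauchy--Schwarz step $\langle |\nabla_H v|^{p-2}\nabla_H v,\nabla_H w\rangle\le|\nabla_H v|^{p-1}|\nabla_H w|$ is sharp only where $\nabla_H v$ and $\nabla_H w$ are nonnegatively parallel. The analogous step $J_p(a)b\le|a|^{p-1}|b|$ is sharp only where the two differences have the same sign.
\item[2.] Each H\"older step is sharp only if $|\nabla_H v|^p$ is proportional to $|\nabla_H w|^p$, and similarly $|v(x)-v(y)|^p$ is proportional to $|w(x)-w(y)|^p$ with respect to $d\nu$.
\item[3.] The final step uses discrete H\"older: $a^{(p-1)/p}c+b^{(p-1)/p}d\le(a+b)^{(p-1)/p}(c^p+d^p)^{1/p}$, where $a,b$ are the $p$-th powers of the two seminorms of $v$ and $c,d$ the seminorms of $w$. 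It is sharp only if $(a,b)$ is proportional to $(c^p,d^p)$.
\end{itemize}

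The cleanest way to combine these is through the nonlocal part. Equality in the whole chain forces equality in the nonlocal part. This gives $v(x)-v(y)=t\,(w(x)-w(y))$ for $\nu$-a.e.\ $(x,y)$, with a single constant $t\ge0$. The proportionality constant is common to all pairs because of the H\"older equality case together with the sign condition. Both $v$ and $w$ vanish outside $\Omega$, so $v=tw$ a.e. If $v,w\neq0$, then $t>0$.

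The converse is immediate by homogeneity: if $v=tw$, then $\langle A(tw),w\rangle=t^{p-1}\|w\|^p=\|v\|^{p-1}\|w\|$.

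$(H_4)$ is the analogous statement in $L^q$: apply H\"older with exponents $q'$ and $q$ to $\int|v|^{q-1}|w|$, together with the sign condition $vw\ge0$. The equality case of H\"older gives $|v|^q=c|w|^q$. Combined with the sign condition, this yields $v=tw$ with $t\ge0$.

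\textbf{Solvability $(H_5)$.} Fix $w\in L^q(\Omega)\setminus\{0\}$. By Lemma \ref{lemb}, $X_\alpha\hookrightarrow L^q(\Omega)$ continuously. Hence $v\mapsto\langle B(w),v\rangle$ is bounded on $X_\alpha$, since by $(H_4)$
\[
|\langle B(w),v\rangle|\le\|w\|_{L^q}^{q-1}C\|v\|_{X_\alpha},
\]
so $B(w)\in X_\alpha^*$. Lemma \ref{newlem} shows that $A$ is bounded, continuous, coercive and monotone, and $X_\alpha$ is separable and reflexive (indeed uniformly convex). Theorem \ref{MBthm} therefore gives some $u\in X_\alpha$ with $A(u)=B(w)$. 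It remains to show $u\neq0$. If $u=0$, then $\int_\Omega|w|^{q-2}wv\,dx=0$ for all $v\in C_c^\infty(\Omega)$, which forces $|w|^{q-2}w=0$ a.e., contradicting $w\neq0$.

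\textbf{Expected main obstacle.} The delicate point is the equality case of $(H_3)$. Extracting $v=tw$ with a single global constant $t$ requires handling the last combining H\"older step carefully. The cleanest route is to use the nonlocal term, which is present for both values of $\alpha$, together with the zero exterior condition. I would also check the degenerate situation in which one of the seminorms vanishes; this is harmless, since a vanishing seminorm forces the function itself to be $0$.
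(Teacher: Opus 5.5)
Your proof is correct. For $(H_1)$, $(H_2)$, $(H_4)$ and $(H_5)$ it follows the paper's route. The genuine difference is in the equality case of $(H_3)$.

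\textbf{How the paper argues.} It extracts proportionality from the \emph{local} term and writes the argument only for $\alpha=1$. Equality in the pointwise Cauchy--Schwarz step gives $\nabla_H v=c(x)\nabla_H w$ with $c\ge0$. Equality in H\"older on the gradient terms gives $|\nabla_H v|=d\,|\nabla_H w|$, hence $\nabla_H(v-dw)=0$. A Poincar\'e-type argument (Lemma \ref{lemb} with $q=p$) then yields $v=dw$. The case $\alpha=0$ is declared analogous, although there it must go through the Gagliardo term anyway.

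\textbf{How you argue.} You first show that global equality forces equality in the nonlocal piece alone, since each piece is bounded by its H\"older product and these combine through the discrete H\"older inequality. The sign condition and the H\"older equality case then give $v(x)-v(y)=t\,(w(x)-w(y))$ $\nu$-a.e. with a single $t$. The exterior condition $v=w=0$ off $\Omega$ replaces Poincar\'e.

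\textbf{What each route buys.} Yours treats $\alpha=0$ and $\alpha=1$ simultaneously and needs no gradient information. The paper's route would survive without the Gagliardo term, i.e.\ for a purely local operator, which yours would not; here that term is always present, so nothing is lost.

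\textbf{Where your version is more careful.} The last step of \eqref{mest} as printed bounds each seminorm of $w$ by $\|w\|_{X_\alpha}$ and then uses $a^{1/p'}+b^{1/p'}\le(a+b)^{1/p'}$. That inequality is the reverse of the true one for the concave power $t\mapsto t^{1/p'}$, and it fails strictly whenever $a,b>0$. So the inequality in $(H_3)$ really rests on the discrete H\"older step you state in your item 3. You should derive it that way rather than quote \eqref{mest}. Also, in $(H_5)$ you actually prove $u\neq0$ by testing with $C_c^\infty(\Omega)$ functions; the paper only asserts this.
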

\begin{proof}
\begin{enumerate}
\item[$(H_1)$] Follows by the definition of $A$.

\item[$(H_2)$] Follows by the definition of $B$.

\item[$(H_3)$] We prove the result only for $\alpha=1$, since the proof is similar when $\alpha=0$. First, we note that from \eqref{mest} the inequality
$\langle A(v),w\rangle\leq\|v\|_{X_\alpha}^{p-1}\|w\|_{X_\alpha}$ holds for all $v,w\in X_\alpha$. Let the equality
\begin{equation}\label{mequal}
\langle A(v),w\rangle=\|v\|_{X_\alpha}^{p-1}\|w\|_{X_\alpha}
\end{equation}
holds for every $v,w\in X_\alpha$. We claim that either $v=0$ or $w=0$ or $v=tw$ for some constant $t>0$. Indeed, if $v=0$ or $w=0$, then \eqref{mequal} is true. Therefore, we assume that both $v$ and $w$ are not identically zero in $\Om$ and prove that $v=dw$ for some constant $d>0$. By the estimate \eqref{mest} if the equality \eqref{mequal} holds, then we have
\begin{equation}\label{mequal1}
\langle A(v), w\rangle=\int_{\Om}|\nabla_H\,v|^{p-1}|\nabla_H\,w|\,dx+\int_{\mathbb{H}^n}\int_{\mathbb{H}^n}{|v(x)-v(y)|^{p-1}|w(x)-w(y)|}\,d\nu,
\end{equation}
which gives us
\begin{equation}\label{mCS}
\int_{\Om}f(x)\,dx+\int_{\mathbb{H}^n}\int_{\mathbb{H}^n}g(x,y)\,d\nu=0,
\end{equation}
where 
$$
f(x)=|\nabla_H\,v|^{p-1}|\nabla_H\,w|-|\nabla_H\,v|^{p-2}\nabla_H\,v\nabla_H\,w
$$
and
$$
g(x,y)=|v(x)-v(y)|^{p-1}|w(x)-w(y)|-|v(x)-v(y)|^{p-2}(v(x)-v(y))w(x)-w(y).
$$
By Cauchy-Schwartz inequality, we have $f\geq 0$ in $\Om$ and $g\geq 0$ in $\mathbb{H}^n\times\mathbb{H}^n$. Hence using these facts in \eqref{mCS}, we have $f=0$ in $\Om$, which reduces to
\begin{equation}\label{mfCS}
|\nabla_H\,v|^{p-1}|\nabla_H\,w|=|\nabla_H\,v|^{p-2}\nabla_H\,v\nabla_H\,w\text{ in }\Om.
\end{equation}
Hence $\nabla_H\, v(x)=c(x)\nabla_H\, w(x)$ for some $c(x)\geq 0$.
On the other hand, if the equality \eqref{mequal} holds, then by the estimate \eqref{mest} we have
\begin{equation}\label{mequal2}
\begin{split}
f_1-f_2=g_2-g_1,
\end{split}
\end{equation}
where
$$
f_1=\int_{\Om}|\nabla_H\,v|^{p-1}|\nabla_H\,w|\,dx,\quad f_2=\Big(\int_{\Om}|\nabla_H \,v|^p\,dx\Big)^\frac{p-1}{p}\Big(\int_{\Om}|\nabla_H \,w|^p\,dx\Big)^\frac{1}{p},
$$
$$
g_1=\int_{\mathbb{H}^n}\int_{\mathbb{H}^n}{|v(x)-v(y)|^{p-2}(v(x)-v(y))(w(x)-w(y))}\,d\nu
$$
and
$$
g_2=\Big(\int_{\mathbb{H}^n}\int_{\mathbb{H}^n}{|v(x)-v(y)|^p}\,d\nu\Big)^\frac{p-1}{p}\Big(\int_{\mathbb{H}^n}\int_{\mathbb{H}^n}{|w(x)-w(y)|^p}\,d\nu\Big)^\frac{1}{p}.
$$
By the H\"older's inequality, we know that $f_1-f_2\leq 0$ and $g_2-g_1\geq 0$. Therefore, we obtain from \eqref{mequal2} that
$$
f_1=f_2\text{ and }g_1=g_2.
$$
Since $f_1=f_2$, the equality in H\"older's inequality holds, which gives
\begin{equation}\label{mCSeq2}
|\nabla_H\,v|=d\,|\nabla_H\,w|\text{ in }\Om,
\end{equation}
for some constant $d>0$. Therefore, $c(x)=d$. Therefore, using Lemma \ref{lemb} with $q=p$, we obtain $v=d\,w$ a.e. in $\Om$ for some constant $d>0$. Hence, the property $(H3)$ is verified. 

\item[$(H_4)$] This property can be verified similarly as in $(H_3)$.

\item[$(H_5)$] Since $X_\alpha$ is a real separable and reflexive Banach space. By Lemma \ref{newlem}, the operator $A:X_\alpha\to X_\alpha^*$ is bounded, continuous, coercive and monotone.

By the Sobolev embedding theorem, we have $X_\alpha$ is continuously embedded in $L^q(\Omega)$. Therefore, $B(w)\in X_\alpha^*$ for every $w\in L^q(\Omega)\setminus\{0\}$.

Hence, by Theorem \ref{MBthm}, for every $w\in L^q(\Omega)\setminus\{0\}$, there exists $u\in X_\alpha\setminus\{0\}$ such that
$$
\langle A(u),v\rangle=\langle B(w),v\rangle\quad\forall v\in X_\alpha.
$$
Hence the property $(H_5)$ holds. This completes the proof.
\end{enumerate}
\end{proof}

\subsection{Proof of the eigenvalue problem related main results:}
\begin{proof}(Proof of Theorem \ref{newthm})
\begin{enumerate}
\item[$(a)$] First we recall the definition of the operators $A:X_\alpha\to X_\alpha^*$ from \eqref{a} and $B:L^q(\Omega)\to (L^{q}(\Omega))^*$ from \eqref{b} respectively. Then, noting the property $(H_5)$ from Lemma \ref{auxlmab} and proceeding along the lines of the proof in \cite[page $579$ and pages $584-585$]{Ercole}, the result follows.

\item[$(b)$] Taking into account that $X_\alpha$ is uniformly convex Banach space and by Lemma \ref{lemb}, $X_\alpha$ is compactly embedded in $L^q(\Omega)$. Next, using Lemma \ref{newlem}-$(i)$, the operators $A:X_\alpha\to X_\alpha^*$ and $B:L^q(\Omega)\to (L^q(\Omega))^*$ are continuous and by Lemma \ref{auxlmab}, the properties $(H_1)-(H_5)$ holds. Noting these facts, the result follows from \cite[page $579$, Theorem 1]{Ercole}.
\end{enumerate}
\end{proof}

\begin{proof}(Proof of Theorem \ref{subopthm1})
The proof follows due to the same reasoning as in the proof of Theorem \ref{newthm}-$(b)$ except that here we apply \cite[page $583$, Proposition $2$]{Ercole} in place of \cite[page $579$, Theorem 1]{Ercole}.
\end{proof}

\begin{proof}(Proof of Theorem \ref{regthm})
We prove the result only for $\alpha=1$, since the proof for $\alpha=0$ is analogous by taking into account Lemma \ref{embd}.
\begin{enumerate}
\item[$(a)$] Let $\alpha=1$. Due to the homogeneity of the equation \eqref{meqn}, without loss of generality, we assume that $\|u\|_{L^q(\Omega)}=1$. Let $k\geq 1$ and set $L(k):=\{x\in\Omega:u(x)>k\}$. Choosing $v=(u-k)^+$ as a test function in \eqref{mwksol}, we obtain
\begin{equation}\label{regtst1}
\begin{split}
&\int_{L(k)}|\nabla_H \,v|^p\,dx+\int_{\mathbb{H}^n}\int_{\mathbb{H}^n}{|v(x)-v(y)|^{p}}\,d\nu\,\\
&\leq\int_{L(k)}|\nabla_H\,u|^{p-2}\na_H\,u\,\na_H\,v\,dx+\int_{\mathbb{H}^n}\int_{\mathbb{H}^n}{J_p(u(x)-u(y))(v(x)-v(y))}\,d\nu\,\\
&=\lambda\int_{L(k)}u^{q-1}(u-k)\,dx.
\end{split}
\end{equation}
Therefore, we have
\begin{equation}\label{regtst2}
\int_{L(k)}|\nabla_H\,u|^p\,dx\leq\lambda\int_{L(k)}u^{q-1}(u-k)\,dx\leq\lambda\int_{L(k)}u^{q-1}(u-k)\,dx.
\end{equation}
We prove the result into two cases below.\\
\textbf{Case $I$.} Let $q\leq p$, then since $k\geq 1$, over the set $L(k)$, we have $u^{q-1}\leq u^{p-1}$. Therefore, from \eqref{regtst1} we have
\begin{equation}\label{regtst22}
\begin{split}
\int_{L(k)}|\nabla_H \,u|^p\,dx&\leq\lambda\int_{L(k)}u^{p-1}(u-k)\,dx\\
&\leq\lambda\int_{L(k)}(2^{p-1}(u-k)^{p}+2^{p-1}k^{p-1}(u-k))\,dx,
\end{split}
\end{equation}
where to obtain the last inequality above, we have used the inequality $(a+b)^{p-1}\leq 2^{p-1}(a^{p-1}+b^{p-1})$ for $a,b\geq 0$. Using Lemma \ref{embd} with $l=p$ in \eqref{regtst22} we obtain
\begin{equation}\label{regtst3}
\begin{split}
(1-S\lambda 2^{p-1}|L(k)|^\frac{p}{Q})\int_{L(k)}(u-k)^p\,dx&\leq\lambda S 2^{p-1}k^{p-1}|L(k)|^\frac{p}{Q}\int_{L(k)}(u-k)\,dx,
\end{split}
\end{equation}
where $S>0$ is the Sobolev constant. Note that $\|u\|_{L^1(\Omega)}\geq k|L(k)|$ and therefore for every $k\geq k_0=(2^p S\lambda)^\frac{Q}{p}\|u\|_{L^1(\Omega)}$, we have $S\lambda 2^{p-1}|L(k)|^\frac{p}{Q}\leq\frac{1}{2}$. Using this fact in \eqref{regtst3}, for every
$k\geq\max\{k_0,1\}$, we get
\begin{equation}\label{regtst4}
\begin{split}
\int_{L(k)}(u-k)^p\,dx&\leq \lambda S 2^{p}k^{p-1}|L(k)|^\frac{p}{\nu}\int_{L(k)}(u-k)\,dx.
\end{split}
\end{equation}
Using H\"older's inequality and the estimate \eqref{regtst4} we obtain
\begin{equation}\label{regtst5}
\int_{L(k)}(u-k)\,dx\leq (\lambda S2^p)^\frac{1}{p-1}k|L(k)|^{1+\frac{p}{Q(p-1)}}.
\end{equation}
Noting \eqref{regtst5}, by \cite[Lemma $5.1$]{LN}, we get $u\in L^\infty(\Omega)$. \\
\textbf{Case $II.$} Let $q>p$, then using the inequality $(a+b)^{q-1}\leq 2^{q-1}(a^{q-1}+b^{q-1})$ for $a,b\geq 0$ in \eqref{regtst1} we get
\begin{equation}\label{regtstc2}
\int_{L(k)}|\nabla_H\, u|^p\,dx\leq\lambda\int_{L(k)}(2^{q-1}(u-k)^{q}+2^{q-1}k^{q-1}(u-k))\,dx.
\end{equation}
Now, Lemma \ref{embd} with $l=q$ in the estimate \eqref{regtstc2} we obtain
\begin{equation}\label{regtstc21}
\begin{split}
\left(\int_{L(k)}(u-k)^q\,dx\right)^\frac{p}{q}&\leq S\lambda|L(k)|^{p(\frac{1}{q}-\frac{1}{p}+\frac{1}{Q})}\int_{L(k)}(2^{q-1}(u-k)^q+2^{q-1}k^{q-1}(u-k))\,dx,
\end{split}
\end{equation}
where $S>0$ is the Sobolev constant. Since $\int_{L(k)}(u-k)^q\,dx\leq\|u\|^{q}_{L^q(\Omega)}=1$ and $q>p$, the quantity in the left side of \eqref{regtstc21} can be estimated from below as

\begin{equation}\label{regtstc22}
\left(\int_{L(k)}(u-k)^q\,dx\right)^\frac{p}{q}=\left(\int_{L(k)}(u-k)^q\,dx\right)^{\frac{p-q}{q}+1}\geq\int_{L(k)}(u-k)^q\,dx.
\end{equation}

Using \eqref{regtstc22} in \eqref{regtstc21} we get
\begin{equation}\label{regtstc23}
\begin{split}
&\Big(1-S\lambda 2^{q-1}|L(k)|^{p(\frac{1}{q}-\frac{1}{p}+\frac{1}{Q})}\Big)\int_{L(k)}(u-k)^q\,dx\\
&\leq S\lambda 2^{q-1}k^{q-1}|L(k)|^{p(\frac{1}{q}-\frac{1}{p}+\frac{1}{Q})}\int_{L(k)}(u-k)\,dx.
\end{split}
\end{equation}
Let $\delta={p(\frac{1}{q}-\frac{1}{p}+\frac{1}{Q})}$, which is positive, since $1<q<p^{*}$. Choosing $k_1=(S\lambda 2^q)^\frac{1}{\delta}\|u\|_{L^1(\Omega)}$, due to the fact that $k|L(k)|\leq\|u\|_{L^1(\Omega)}$, we obtain for every $k\geq k_1$ that $S\lambda 2^{q-1}|L(k)|^\delta\leq\frac{1}{2}$. Using this property in \eqref{regtstc23}, we have

\begin{equation}\label{regtstc24}
\begin{split}
\int_{L(k)}(u-k)^q\,dx&\leq S\lambda 2^{q}k^{q-1}|L(k)|^\delta\int_{L(k)}(u-k)\,dx.
\end{split}
\end{equation}
By H\"older's inequality and the estimate \eqref{regtstc24} we arrive at
\begin{equation}\label{regtstc25}
\int_{L(k)}(u-k)\,dx\leq (\lambda S2^q)^\frac{1}{q-1}k|L(k)|^{1+\frac{\delta}{q-1}}.
\end{equation}
Noting \eqref{regtstc25}, by \cite[Lemma $5.1$]{LN}, we get $u\in L^\infty(\Omega)$.

\item[$(b)$] By \cite[Theorem 3.7]{GMBruno} (for $\alpha=0$), and \cite[Theorem $1.4$]{Mjde} (for $\alpha=1$), the result follows.
\end{enumerate}
\end{proof}

\section{Perturbed singular problem}
We introduce an approximation of the singular problem \eqref{meqn} and develop the corresponding variational framework. To this end, we follow the variational approach from \cite{Arcoya}. To this end, for $\e>0$, we consider the following approximated problem
\begin{equation*}
(P_{\mu,\alpha,\e})\qquad
\left\{
\begin{aligned}
 \mathcal{M}_\alpha\,u &= \frac{\mu}{(u^+ +\e)^{\gamma(x)}}+ (u^+)^\beta \quad \text{in } \Om,\\
 u&=0 \quad \text{in } \mathbb{H}^n \setminus \Om,
\end{aligned}
\right.
\end{equation*}
and establish existence of two distinct solutions by variational method. 
The energy functional $I_{\mu,\alpha,\e}:X_\alpha\to\mathbb{R}$ associated with $(P_{\la,\alpha,\e})$ is given by
\[
I_{\mu,\alpha,\e}(u)
= \frac{\|u\|^p}{p}
-\mu\int_\Om
\frac{(u^+ +\e)^{1-\gamma(x)}-\e^{1-\gamma(x)}}{1-\gamma(x)}\,dx
-\frac{1}{\beta+1}\int_\Om (u^+)^{\beta+1}\,dx.
\]
Finally to pass to the limit, we obtain a priori estimates on the approximate solutions, and obtain the multiplicity results.

\subsection{A priori estimates and convergence}
Let $I_{\mu,\alpha} : X_\alpha \to \mathbb{R}\cup\{\pm\infty\}$ denote the energy functional associated with problem \eqref{meqn}, defined by
\[
I_{\mu,\alpha}(u) = \frac{\|u\|^p}{p}
-\mu \int_\Om \frac{(u^+)^{1-\gamma(x)}}{1-\gamma(x)}\,dx
-\frac{1}{\beta+1}\int_\Om (u^+)^{\beta+1}\,dx.
\]

For $\e>0$, we consider the following approximated problem
\begin{equation*}
(P_{\mu,\alpha,\e})
\left\{
\begin{aligned}
 \mathcal{M}_\alpha\,u &= \frac{\mu}{(u^+ +\e)^{\gamma(x)}}+ (u^+)^\beta \quad \text{in } \Om,\\
 u&=0 \quad \text{in } \mathbb{H}^n \setminus \Om,
\end{aligned}
\right.
\end{equation*}

The energy functional $I_{\mu,\alpha,\e}:X_\alpha\to\mathbb{R}$ associated with $(P_{\la,\alpha,\e})$ is given by
\[
I_{\mu,\alpha,\e}(u)
= \frac{\|u\|^p}{p}
-\mu\int_\Om
\frac{(u^+ +\e)^{1-\gamma(x)}-\e^{1-\gamma(x)}}{1-\gamma(x)}\,dx
-\frac{1}{\beta+1}\int_\Om (u^+)^{\beta+1}\,dx.
\]
It is straightforward to verify that $I_{\mu,\alpha,\e}\in C^1(X_\alpha,\mathbb{R})$, 
$I_{\mu,\alpha,\e}(0)=0$, and $I_{\mu,\alpha,\e}(v)\leq I_{0,\alpha,\e}(v)$ for all 
$v\in X_\alpha$ with $v\geq 0$.

Without loss of generality, we assume that $\|e_{1,\alpha}\|_\infty=1$, where $e_{1,\alpha}$ is given by Lemma \ref{evpthm}. The following lemma shows that the functional $I_{\mu,\alpha,\e}$ satisfies the 
Mountain Pass geometry.

\begin{lemma}\label{MP-geo}
There exist constants $R>0$, $\rho>0$, and $\Lambda>0$ depending on $R$ such that
\[
\inf_{\|v\|\le R} I_{\mu,\alpha,\e}(v) < 0
\quad \text{and} \quad
\inf_{\|v\|=R} I_{\mu,\alpha,\e}(v) \ge \rho,
\quad \text{for all } \mu \in (0,\Lambda).
\]
Moreover, there exists $T>R$ such that
\[
I_{\mu,\alpha,\e}(T e_{1,\alpha}) < -1
\quad \text{for all } \mu \in (0,\Lambda).
\]
\end{lemma}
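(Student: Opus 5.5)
The three assertions will be handled separately, and in each the singular term is controlled by a bound uniform in $\e$. Since $0<\gamma(x)<1$, for $t\ge0$ we have $(t+\e)^{1-\gamma(x)}-\e^{1-\gamma(x)}\le t^{1-\gamma(x)}$ by subadditivity of $t\mapsto t^{1-\gamma(x)}$. Put $\gamma^-=\min_{\overline\Om}\gamma>0$ and $\gamma^+=\max_{\overline\Om}\gamma<1$, which exist by continuity. Then
\[
0\le \int_\Om\frac{(u^++\e)^{1-\gamma(x)}-\e^{1-\gamma(x)}}{1-\gamma(x)}\,dx\le \frac{1}{1-\gamma^+}\int_\Om \big(1+|u|\big)\,dx\le C\big(|\Om|+\|u\|\big),
\]
where the last step uses Lemma \ref{lemb} with $q=1$ (or any $q\in(1,l)$ followed by H\"older's inequality). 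The constant $C$ is independent of $\e$.

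\textbf{Step 1 (sphere estimate).} Since $\beta+1\in(p,l)$ with $l=p^*$ or $p_s^*$, Lemma \ref{lemb} gives $\int_\Om (u^+)^{\beta+1}\le S\|u\|^{\beta+1}$. Hence
\[
I_{\mu,\alpha,\e}(u)\ge \frac{\|u\|^p}{p}-\frac{S}{\beta+1}\|u\|^{\beta+1}-\mu C(|\Om|+\|u\|).
\]
Because $\beta+1>p$, we can fix $R>0$ small enough that $\frac{R^p}{p}-\frac{S}{\beta+1}R^{\beta+1}\ge \frac{R^p}{2p}$. Then choose $\Lambda>0$ with $\Lambda C(|\Om|+R)\le \frac{R^p}{4p}$. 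This gives $\inf_{\|v\|=R}I_{\mu,\alpha,\e}\ge\rho:=\frac{R^p}{4p}$ for all $\mu\in(0,\Lambda)$ and all $\e>0$.

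\textbf{Step 2 (negative values inside the ball).} We test with $te_{1,\alpha}$ for small $t>0$. The singular integral is positive, so we can simply drop it, but that does not make the energy negative. Instead we use the $\beta$-term: for small $t$, $I_{\mu,\alpha,\e}(te_{1,\alpha})\le \frac{t^p}{p}\|e_{1,\alpha}\|^p-\mu\int_\Om \frac{(te_{1,\alpha}+\e)^{1-\gamma}-\e^{1-\gamma}}{1-\gamma}$. The subtracted integral is of order $t$ when $t\ll \e$ (derivative $\e^{-\gamma}e_{1,\alpha}$) and of order $t^{1-\gamma^+}$ when $t\gg\e$. In both regimes it beats $t^p$ for small $t$, since $p>1>1-\gamma^+$.

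This is the step I expect to be the main obstacle, because $\e$ is free and the lemma only asks for negativity for each fixed $\e$, $\mu$. I would make the argument rigorous by a mean-value bound: $(a+\e)^{1-\gamma}-\e^{1-\gamma}\ge(1-\gamma)a(a+\e)^{-\gamma}$. This yields the lower bound $\mu t\int_\Om e_{1,\alpha}(t+\e)^{-\gamma(x)}\,dx$, using $\|e_{1,\alpha}\|_\infty=1$. The bound is $\ge c\mu t$ with $c=c(\e)>0$, which beats $t^p\|e_{1,\alpha}\|^p/p$ as $t\to0$. So for each fixed $\mu\in(0,\Lambda)$ and $\e$ there is $t<R/\|e_{1,\alpha}\|$ with $I_{\mu,\alpha,\e}(te_{1,\alpha})<0$, and therefore $\inf_{\|v\|\le R}I_{\mu,\alpha,\e}<0$.

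\textbf{Step 3 (far point).} We use $I_{\mu,\alpha,\e}(v)\le I_{0,\alpha,\e}(v)$ for $v\ge0$ together with $e_{1,\alpha}>0$. This gives
\[
I_{\mu,\alpha,\e}(Te_{1,\alpha})\le \frac{T^p}{p}\|e_{1,\alpha}\|^p-\frac{T^{\beta+1}}{\beta+1}\int_\Om e_{1,\alpha}^{\beta+1}\,dx.
\]
The right-hand side is independent of $\mu$ and $\e$ and tends to $-\infty$ as $T\to\infty$ since $\beta+1>p$. Hence we can choose $T>R/\|e_{1,\alpha}\|$ with the right side $<-1$. Since $\|Te_{1,\alpha}\|>R$, this also gives $T>R$ after the normalization of $e_{1,\alpha}$ has been accounted for, or after enlarging $T$.
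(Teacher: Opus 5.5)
Your proof is correct and follows essentially the same route as the paper: the subadditivity bound $(t+\e)^{1-\gamma(x)}-\e^{1-\gamma(x)}\le t^{1-\gamma(x)}$ and the Sobolev estimate on the $\beta$-term give $I_{0,\alpha,\e}\ge 2\rho$ on $\{\|v\|=R\}$, with $\Lambda$ chosen to absorb the singular part (your explicit bound via $t^{1-\gamma}\le 1+t$ replaces the paper's supremum over the sphere). Negativity near $0$ comes from the $\e$-dependent linear behaviour of the singular term along $te_{1,\alpha}$ (your mean-value bound in place of the paper's derivative $-\mu\int_\Om\e^{-\gamma(x)}e_{1,\alpha}\,dx$), and $I_{\mu,\alpha,\e}\le I_{0,\alpha,\e}\to-\infty$ along $Te_{1,\alpha}$ — the only slip is the sentence in Step 2 saying you drop the singular integral and use the $\beta$-term, which is the reverse of what you do, though your displayed inequality is the right one.
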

\begin{proof}
We prove the result by considering $\alpha=1$. The proof for $\alpha=0$ is exactly similar.
We fix 
\[
l := |\Om|^{\frac{1}{\left(\frac{p^*}{\beta+1}\right)'}}.
\]
Then, by H\"older's inequality together with Lemma \ref{embd}, we obtain
\begin{equation}\label{MP1}
\int_\Om (v^+)^{\beta+1}\,dx 
\le \left(\int_\Om |v|^{p^*}\,dx\right)^{\frac{\beta+1}{p^*}}
|\Om|^{\frac{1}{\left(\frac{p^*}{\beta+1}\right)'}}
\le Cl\|v\|^{\beta+1},
\end{equation}
for some positive constant $C$ independent of $v$.

Next, we observe that
\[
\lim_{t\to 0}\frac{I_{\mu,\alpha,\e}(t e_{1,\alpha})}{t}
= -\mu\int_{\Omega}\e^{-\gamma(x)}e_{1}\,dx < 0.
\]
Consequently, one can choose $k\in(0,1)$ sufficiently small and set
\[
R := k\left(\frac{\beta+1}{pCl}\right)^{\frac{1}{\beta+1-p}}
\]
so that
\[
\inf_{\|v\|\le R} I_{\mu,\alpha,\e}(v) < 0.
\]

Moreover, since 
\[
R < \left(\frac{\beta+1}{pCl}\right)^{\frac{1}{\beta+1-p}},
\]
we obtain
\[
I_{0,\alpha,\e}(v) \ge \frac{R^p}{p}-\frac{ClR^{\beta+1}}{\beta+1}
=: 2\rho > 0.
\]

We now define
\[
\Lambda := 
\frac{\rho}{\displaystyle 
\sup_{\|v\|=R}\left(
\frac{1}{1-\gamma(x)}
\int_\Om |v|^{1-\gamma(x)}\,dx
\right)},
\]
which is a positive constant. Since $\rho$ and $R$ depend on $k,\beta,p,|\Omega|,$ and $C$, the constant $\Lambda$ depends on these quantities as well.

Finally, we note the inequality
\begin{equation}\label{knownfact}
(v^{+}+\e)^{1-\gamma(x)}-\e^{1-\gamma(x)}
\le (v^+)^{1-\gamma(x)}.
\end{equation}
Using \eqref{knownfact}, we obtain
\[
I_{\mu,\alpha,\e}(v)
\geq \frac{\|v\|^p}{p}
-\frac{1}{\beta+1}\int_{\Om}(v^+)^{\beta+1}\,dx
-\frac{\mu}{1-\gamma(x)}\int_{\Om}(v^{+})^{1-\gamma(x)}\,dx
= I_{0,\alpha,\e}(v)
-\frac{\mu}{1-\gamma(x)}\int_{\Om}(v^{+})^{1-\gamma(x)}\,dx .
\]

Therefore,
\begin{align*}
\inf_{\|v\|=R} I_{\mu,\alpha,\e}(v)
&\geq \inf_{\|v\|=R} I_{0,\alpha,\e}(v)
-\mu \sup_{\|v\|=R}
\left(
\frac{1}{1-\gamma(x)}\int_\Om |v|^{1-\gamma(x)}\,dx
\right) \\
&\geq 2\rho
-\mu \sup_{\|v\|=R}
\left(
\frac{1}{1-\gamma(x)}\int_\Om |v|^{1-\gamma(x)}\,dx
\right)
\geq \rho,
\end{align*}
for all $\mu \in (0,\Lambda)$.

Finally, it is easy to see that $I_{0,\alpha,\e}(t e_{1,\alpha})\to -\infty$ as $t\to +\infty$. 
Hence we can choose $T>R$ such that $I_{0,\e}(T e_{1,\alpha})<-1$. Consequently,
\[
I_{\mu,\alpha,\e}(T e_{1,\alpha})\leq I_{0,\alpha,\e}(T e_{1,\alpha})<-1,
\]
which completes the proof.\qed
\end{proof}

Our next lemma shows that the functional $I_{\mu,\alpha,\e}$ satisfies the Palais--Smale $(PS)_c$ condition.

\begin{Proposition}\label{PS-cond}
The functional $I_{\la,\alpha,\e}$ satisfies the $(PS)_c$ condition for every $c\in \mathbb{R}$. 
That is, if $\{u_k\}_{k\in\mb{N}}\subset X_\alpha$ is a sequence such that
\begin{equation}\label{PS1}
I_{\la,\alpha,\e}(u_k)\to c 
\quad \text{and} \quad 
I_{\la,\alpha,\e}'(u_k)\to 0
\end{equation}
as $k\to\infty$, then $\{u_k\}_{k\in\mb{N}}$ admits a strongly convergent subsequence in $X_\alpha$.
\end{Proposition}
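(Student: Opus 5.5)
We follow the standard two-step scheme: first show that every $(PS)_c$ sequence is bounded in $X_\alpha$, then upgrade weak convergence to strong convergence via the $(S_+)$ property of the operator $A$ from \eqref{a}.

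\textbf{Step 1: boundedness.} We compare $I_{\mu,\alpha,\e}(u_k)$ with $\frac{1}{\beta+1}\langle I'_{\mu,\alpha,\e}(u_k),u_k\rangle$. By \eqref{PS1}, for $k$ large,
\[
c+1+o(1)\|u_k\| \ge \Big(\frac1p-\frac1{\beta+1}\Big)\|u_k\|^p-\mu\int_\Om\frac{(u_k^++\e)^{1-\gamma(x)}-\e^{1-\gamma(x)}}{1-\gamma(x)}\,dx+\frac{\mu}{\beta+1}\int_\Om\frac{u_k}{(u_k^++\e)^{\gamma(x)}}\,dx .
\]
The $(u_k^+)^{\beta+1}$ terms cancel exactly. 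For the singular terms:
\begin{itemize}
\item Since $\gamma\in C(\overline\Om)$ with values in $(0,1)$, there is $\gamma^+<1$ with $\gamma\le\gamma^+$, and $1-\gamma\ge 1-\gamma^+>0$.
\item Using \eqref{knownfact}, the first singular integral is bounded by $C\int_\Om (u_k^+)^{1-\gamma(x)}\,dx\le C\big(|\Om|+\int_\Om|u_k|\,dx\big)\le C(1+\|u_k\|)$, by Lemma \ref{embd}.
\item The last integral is bounded below by $-\int_\Om \e^{-\gamma(x)}u_k^-\,dx\ge -C_\e\|u_k\|$.
\end{itemize}
Since $\beta+1>p$, this gives $\|u_k\|^p\le C(1+\|u_k\|)$, so $\{u_k\}$ is bounded.

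\textbf{Step 2: passing to a weak limit.} By reflexivity of $X_\alpha$ and the compact embedding of Lemma \ref{lemb}, up to a subsequence $u_k\rightharpoonup u$ weakly in $X_\alpha$ and $u_k\to u$ strongly in $L^r(\Om)$ for every $r<l$ (with $l=p^*$ or $p_s^*$). In particular this holds for $r=\beta+1$ and $r=1$, and also $u_k\to u$ a.e.

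\textbf{Step 3: strong convergence.} Testing with $u_k-u$ gives
\[
\langle A u_k,u_k-u\rangle=\langle I'_{\mu,\alpha,\e}(u_k),u_k-u\rangle+\mu\int_\Om\frac{u_k-u}{(u_k^++\e)^{\gamma(x)}}\,dx+\int_\Om (u_k^+)^\beta(u_k-u)\,dx .
\]
Each term on the right tends to zero:
\begin{itemize}
\item the first, since $I'_{\mu,\alpha,\e}(u_k)\to0$ and $\{u_k-u\}$ is bounded;
\item the second, since $0<(u_k^++\e)^{-\gamma(x)}\le\max\{\e^{-\gamma^+},\e^{-\gamma^-}\}$ and $u_k\to u$ in $L^1(\Om)$;
\item the third, by H\"older with exponents $\frac{\beta+1}{\beta}$ and $\beta+1$, since $\{u_k\}$ is bounded in $L^{\beta+1}(\Om)$ and $u_k\to u$ in $L^{\beta+1}(\Om)$, which is legitimate because $\beta+1<l$.
\end{itemize}
Hence $\limsup_{k}\langle Au_k-Au,u_k-u\rangle=0$, using also $\langle Au,u_k-u\rangle\to0$ by weak convergence.

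It remains to show $\|u_k\|\to\|u\|$. By the H\"older estimate $(H_3)$ of Lemma \ref{auxlmab},
\[
\langle Au_k-Au,u_k-u\rangle\ge\big(\|u_k\|^{p-1}-\|u\|^{p-1}\big)\big(\|u_k\|-\|u\|\big)\ge0 ,
\]
so $\|u_k\|\to\|u\|$. Uniform convexity of $X_\alpha$ (Radon--Riesz) then gives $u_k\to u$ strongly in $X_\alpha$.

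\textbf{Main obstacle.} We expect the main care to be needed in Step 1, in bounding the singular terms uniformly with the variable exponent $\gamma(x)$, which requires splitting into $\{u_k\le1\}$ and $\{u_k>1\}$. The constants may depend on $\e$; this is harmless here because $\e$ is fixed in the statement.
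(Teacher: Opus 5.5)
Your proposal is correct and follows essentially the same route as the paper: boundedness from $I_{\mu,\alpha,\e}(u_k)-\frac{1}{\beta+1}\langle I_{\mu,\alpha,\e}'(u_k),u_k\rangle$ (where the $(u_k^+)^{\beta+1}$ terms cancel), then $\langle Au_k-Au,u_k-u\rangle\to0$, convergence of norms, and strong convergence by uniform convexity of $X_\alpha$. The differences are only cosmetic. You test directly with $u_k-u$ and apply the H\"older inequality $(H_3)$ to the full operator $A$, which gives $\|u_k\|\to\|u\|$ in one step. The paper instead tests separately with $u_k$ and $u_0$ and handles the local and nonlocal seminorms one at a time.
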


\begin{proof}
We prove the result for $\alpha=1$ only, since the proof for $\alpha=0$ follows by exactly similar arguments.

Let $\{u_k\}_{k\in\mathbb{N}}\subset X_\alpha$ be a sequence satisfying \eqref{PS1}. We claim that 
$\{u_k\}_{k\in\mathbb{N}}$ is bounded in $X_\alpha$. To see this, using \eqref{knownfact} we obtain
\begin{equation}\label{PS2}
\begin{split}
I_{\mu,\alpha,\e}(u_k)- \frac{1}{\beta+1}I_{\mu,\alpha,\e}'(u_k)u_k 
&= \left( \frac{1}{p}-\frac{1}{\beta+1}\right)\|u_k\|^p 
-{\mu}\int_\Om \frac{(u_k^+ +\e)^{1-\gamma(x)}-\e^{1-\gamma(x)}}{1-\gamma(x)}\,dx\\
&\quad +\frac{\mu}{\beta+1}\int_\Om (u_k^+ +\e)^{-\gamma(x)}u_k\,dx\\
&\geq \left( \frac{1}{p}-\frac{1}{\beta+1}\right)\|u_k\|^p 
-\mu\int_\Om \frac{(u_k^+)^{1-\gamma(x)}}{1-\gamma(x)}\,dx\\
&\quad + \frac{\mu}{\beta+1}\int_\Om (u_k^+ +\e)^{-\gamma(x)}u_k\,dx\\
&\geq \left( \frac{1}{p}-\frac{1}{\beta+1}\right)\|u_k\|^p 
-\mu\int_\Om \frac{(u_k^+)^{1-\gamma(x)}}{1-\gamma(x)}\,dx
-\frac{\mu C}{\e(\beta+1)}\|u_k\|,
\end{split}
\end{equation}
for some positive constant $C$ independent of $k$. Here we used the embedding 
result given in Lemma \ref{lemb} and the assumption $0<\gamma(x)<1$ in $\overline{\Om}$.

Using a similar argument, we also obtain
\begin{equation}\label{PS2-new}
\begin{split}
-\int_\Om \frac{(u_k^+)^{1-\gamma(x)}}{1-\gamma(x)}\,dx 
&\geq -\int_\Om \frac{|u_k|^{1-\gamma(x)}}{1-\gamma(x)}\,dx\\
&\geq \frac{-1}{1-\|\gamma\|_{\infty}}
\left(
\int_{\Om \cap \{|u_k|\ge 1\}} |u_k|^{1-\gamma(x)}\,dx
+\int_{\Om \cap \{|u_k|<1\}} |u_k|^{1-\gamma(x)}\,dx
\right)\\
&\geq \frac{-1}{1-\|\gamma\|_{\infty}}
\left(
\int_{\Om \cap \{|u_k|\ge 1\}} |u_k|\,dx
+\int_{\Om \cap \{|u_k|<1\}} 
|u_k|^{1-\|\gamma\|_{\infty}}\,dx
\right)\\
&\geq -C\left(\|u_k\|+\|u_k\|^{1-\|\gamma\|_{\infty}}\right),
\end{split}
\end{equation}
for some constant $C>0$ independent of $k$. Thus, inserting \eqref{PS2-new} into \eqref{PS2}, we obtain for some positive
constants $C_1$ and $C$ (both independent of $k$), that
\begin{equation}\label{PS2-new1}
I_{\mu,\alpha,\e}(u_k)- \frac{1}{\beta+1}I_{\mu,\alpha,\e}'(u_k)u_k
\ge C_1\|u_k\|^p
- C\left(\|u_k\|+\|u_k\|^{1-\|\gamma\|_{\infty}}\right).
\end{equation}

Moreover, from \eqref{PS1} it follows that for $k$ sufficiently large
\begin{equation}\label{PS3}
\left| I_{\mu,\alpha,\e}(u_k)- \frac{1}{\beta+1}I_{\mu,\alpha,\e}'(u_k)u_k \right|
\le c + o(\|u_k\|).
\end{equation}

Combining \eqref{PS2-new1} and \eqref{PS3}, we deduce that $\{u_k\}_{k\in\mathbb{N}}$ is bounded
in $X_\alpha$ since $p>1$. By the reflexivity of $X_\alpha$, there exists $u_0\in X_\alpha$ such that,
up to a subsequence,
\[
u_k \rightharpoonup u_0 \quad \text{weakly in } X_\alpha \quad \text{as } k\to\infty.
\]

\medskip
\noindent
\textbf{Claim.} $u_k \to u_0$ strongly in $X_\alpha$ as $k\to\infty$.

\medskip
For convenience, we define
\[
\mc A(v,\phi)
:=
\int_{\Om}
|\na_H\,v|^{p-2}\na_H\,v\,\na_H\,\phi\,dx,
\]
and
\[
\mc B(v,\phi)
:=
\int_{\mb H^{n}}\int_{\mb H^{n}}
J_p(v(x)-v(y))(\phi(x)-\phi(y))
\,d\nu.
\]

From \eqref{PS1} we obtain
\[
\lim_{k\to\infty}
\left(
\mc A(u_k,u_0)+\mc B(u_k,u_0)
-\mu\int_\Om (u_k^+ +\e)^{-\gamma(x)}u_0\,dx
-\int_\Om (u_k^+)^{\beta}u_0\,dx
\right)=0,
\]
and
\[
\lim_{k\to\infty}
\left(
\mc A(u_k,u_k)+\mc B(u_k,u_k)
-\mu\int_\Om (u_k^+ +\e)^{-\gamma(x)}u_k\,dx
-\int_\Om (u_k^+)^{\beta}u_k\,dx
\right)=0.
\]

Let $U_k(x,y)=u_k(x)-u_k(y)$ and $U_0(x,y)=u_0(x)-u_0(y)$. Then
\begin{equation}\label{PS4}
\begin{split}
&\lim_{k\to\infty}
\int_{\Om}(|\na_H\,u_k|^{p-2}\na_H\,u_k-|\na_H\,u_0|^{p-2}\na_H\,u_0)\na_H\,(u_k-u_0)\,dx\\
&\qquad+\lim_{k\to\infty}
\int_{\mb H^{n}}\int_{\mb H^{n}}
{
(J_p(U_k)-J_p(U_0))(U_k-U_0)
}\,d\nu\\
&=
\lim_{k\to\infty}
\Big(
\mu\int_\Om (u_k^+ +\e)^{-\gamma(x)}u_k\,dx
+\int_\Om (u_k^+)^\beta u_k\,dx
-\mu\int_\Om (u_k^+ +\e)^{-\gamma(x)}u_0\,dx
-\int_\Om (u_k^+)^\beta u_0\,dx
\Big) \\
&\quad
-\lim_{k\to\infty}\int_{\Om}(|\na_H\,u_0|^{p-2}\na_H\,u_0\,\na_H\,u_k-|\na_H\,u_0|^p)\,dx-\lim_{k\to\infty}
\big(\mc A(u_0,u_k)-\mc A(u_0,u_0)\big).
\end{split}
\end{equation}

From the weak convergence of $\{u_k\}_{k\in\mathbb{N}}$ in $X_\alpha$, we obtain
\begin{equation}\label{PS5}
\lim_{k\to\infty}
\big(\mc A(u_0,u_k)-\mc A(u_0,u_0)\big)=0.
\end{equation}

Indeed, $u_k\rightharpoonup u_0$ weakly in $X_\alpha$ implies
\[
\frac{u_k(x)-u_k(y)}{|y^{-1}\circ x|_{K}^{\frac{Q+sp}{p}}}
\rightharpoonup
\frac{u_0(x)-u_0(y)}{|y^{-1}\circ x|_{K}^{\frac{Q+sp}{p}}}
\quad \text{in } L^p(\mb H^{2n}),
\]
and
\[
\frac{J_p(u_0(x)-u_0(y))}
{|y^{-1}\circ x|_{K}^{\frac{Q+sp}{p'}}}
\in L^{p'}(\mb H^{2n}),
\]
which proves \eqref{PS5}.

Similarly, \(u_k \rightharpoonup u_0\) in \(X_\alpha\) implies
$
\nabla_H\,u_k \rightharpoonup \nabla_H\,u_0
\text{ weakly in } L^p(\Omega).
$
Since
\[
|\nabla_H\,u_0|^{p-2}\nabla_H\,u_0 \in L^{p'}(\Omega),
\]
one can obtain
\begin{equation}\label{llim}
\lim_{k\to\infty}
\int_{\Omega}
\left(
|\nabla_H\,u_0|^{p-2}\nabla_H\,u_0 \nabla_H\,u_k
-
|\nabla_H\,u_0|^p
\right)\,dx
=0.
\end{equation}

Next, since
\[
|(u_k^+ +\e)^{-\gamma(x)}u_0|
\le \e^{-\gamma(x)}|u_0|,
\]
and
\[
\int_\Om \e^{-\gamma(x)}|u_0|\,dx
\le (1+\e^{-\|\gamma\|_{\infty}})
\int_\Om |u_0|\,dx <\infty,
\]
Lebesgue's dominated convergence theorem yields
\begin{equation}\label{PS6}
\lim_{k\to\infty}
\int_\Om (u_k^+ +\e)^{-\gamma(x)}u_0\,dx
=
\int_\Om (u_0^+ +\e)^{-\gamma(x)}u_0\,dx .
\end{equation}

Similarly, by Vitali's convergence theorem we obtain
\begin{equation}\label{PS7}
\lim_{k\to\infty}
\mu\int_\Om (u_k^+ +\e)^{-\gamma(x)}u_k\,dx
=
\mu\int_\Om (u_0^+ +\e)^{-\gamma(x)}u_0\,dx .
\end{equation}

Using similar estimates, we also deduce
\begin{equation}\label{PS8}
\lim_{k\to\infty}
\int_\Om (u_k^+)^\beta u_0\,dx
=
\int_\Om (u_0^+)^\beta u_0\,dx,
\end{equation}
and
\begin{equation}\label{PS9}
\lim_{k\to\infty}
\int_\Om (u_k^+)^\beta u_k\,dx
=
\int_\Om (u_0^+)^\beta u_0\,dx.
\end{equation}

Substituting \eqref{PS5}--\eqref{PS9} into \eqref{PS4}, we obtain
\begin{equation}\label{klim}
\begin{split}
&\lim_{k\to\infty}
\int_{\Om}(|\na_H\,u_k|^{p-2}\na_H\,u_k-|\na_H\,u_0|^{p-2}\na_H\,u_0)\na_H\,(u_k-u_0)\,dx\\
&\qquad+\lim_{k\to\infty}
\int_{\mb H^{n}}\int_{\mb H^{n}}
{
(J_p(U_k)-J_p(U_0))(U_k-U_0)
}\,d\nu
=0.
\end{split}
\end{equation}

By Lemma \ref{alg}, it follows that both terms above are non-negative and, hence, each of the terms in the left hand side of \eqref{klim} above are zero. Moreover, by H\"older's inequality and Lemma \ref{alg1}, we have
\begin{align}
&\int_{\mathbb{H}^n}\int_{\mathbb{H}^n}
{\big(J_p(U_k)-J_p(U_0)\big)
      \big(U_k-U_0\big)}
     \,d\nu
\nonumber\\
&\quad\geq
\left(
[u_k]_{s,p}^{p-1}
-
[u_0]_{s,p}^{p-1}
\right)
\left(
[u_k]_{s,p}
-
[u_0]_{s,p}
\right)
\nonumber\\
&\quad\geq
C(p)\left(
[u_k]_{s,p}^{p/2}
-
[u_0]_{s,p}^{p/2}
\right)^2
\geq 0.
\end{align}
Similarly,
\begin{align}
&\int_{\Omega}
\big(
|\nabla_H\,u_k|^{p-2}\nabla_H\,u_k
-
|\nabla_H\,u_0|^{p-2}\nabla_H\,u_0
\big)
(\nabla_H\,u_k-\nabla_H\,u_0)\,dx
\nonumber\\
&\quad\geq
C(p)\left(
[u_k]_{1}^{p/2}
-
[u_0]_{1,p}^{p/2}
\right)^2
\geq 0.
\end{align}

Combining the above inequalities with \eqref{klim}, we obtain
$
\lim_{k\to\infty}\|u_k\|_{X_\alpha}
\longrightarrow
\|u_0\|_{X_\alpha}.
$
Since \(u_k\rightharpoonup u_0\) weakly in the uniformly convex Banach space
\(X_\alpha\), it follows that
$
u_k\longrightarrow u_0
\text{ strongly in }X_\alpha
\text{ as }k\to\infty.
$ \qed
\end{proof}

\begin{Remark}\label{mprmk1}
As a consequence of Lemma \ref{MP-geo}, we obtain
\[
\inf_{\|v\|=R} I_{\mu,\alpha,\e}(v) \ge \rho 
\max\{ I_{\mu,\alpha,\e}(T e_{1,\alpha}),\, I_{\mu,\alpha,\e}(0) \} = 0.
\]
\end{Remark}

\begin{Remark}\label{mprmk2}
By Lemma \ref{MP-geo} and Proposition \ref{PS-cond}, and the Mountain Pass Lemma,
it follows that for every $\mu\in(0,\Lambda)$ there exists $\zeta_\e \in X_\alpha$
such that $I_{\mu,\alpha,\e}'(\zeta_\e)=0$ and
\[
I_{\mu,\alpha,\e}(\zeta_{\e})
=\inf_{\theta\in\Gamma}\max_{t\in[0,1]} I_{\mu,\alpha,\e}(\theta(t))
\ge \rho>0,
\]
where
\[
\Gamma:=\{\theta\in C([0,1],X_\alpha):\theta(0)=0,\ \theta(1)=Te_{1,\alpha}\}.
\]

Using \eqref{knownfact} and \eqref{PS2-new} together with Vitali's convergence theorem, if
$u_k\rightharpoonup u_0$ weakly in $X_\alpha$, then
\[
\lim_{k\to\infty}
\int_{\Om}
\frac{(u_k+\e)^{1-\gamma(x)}-\e^{1-\gamma(x)}}{1-\gamma(x)}\,dx
=
\int_{\Om}
\frac{(u_0+\e)^{1-\gamma(x)}-\e^{1-\gamma(x)}}{1-\gamma(x)}\,dx .
\]
Consequently, the functional $I_{\mu,\alpha,\e}$ is weakly lower semicontinuous.

Moreover, as a consequence of Lemma \ref{MP-geo}, since for every
$\mu\in(0,\Lambda)$ we have
\[
\inf_{\|v\|\le R} I_{\mu,\alpha,\e}(v)<0,
\]
there exists a nonzero $\nu_\e\in X_\alpha$ with $\|\nu_\e\|\le R$ such that
\begin{equation}\label{limit-pass}
\inf_{\|v\|\le R} I_{\mu,\alpha,\e}(v)=I_{\mu,\alpha,\e}(\nu_\e)
<0<\rho\le I_{\mu,\alpha,\e}(\zeta_\e).
\end{equation}

Therefore, $\zeta_\e$ and $\nu_\e$ are two distinct nontrivial critical
points of $I_{\mu,\alpha,\e}$, whenever $\mu\in(0,\Lambda)$.
\end{Remark}

\begin{lemma}\label{non-negative}
The critical points $\zeta_\e$ and $\nu_\e$ of $I_{\mu,\alpha,\e}$ are nonnegative in $\Om$.
\end{lemma}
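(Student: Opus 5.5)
The plan is to test the critical point equation with the negative part. Let $u\in\{\zeta_\e,\nu_\e\}$, so that $I_{\mu,\alpha,\e}'(u)=0$. For every $\phi\in X_\alpha$ this reads
\[
\alpha\,\mc A(u,\phi)+\mc B(u,\phi)=\mu\int_\Om (u^++\e)^{-\gamma(x)}\phi\,dx+\int_\Om (u^+)^\beta\phi\,dx .
\]
We take $\phi=u^-=\max\{-u,0\}$. This is admissible: $u^-\in X_\alpha$, because truncation by $t\mapsto t^-$ is $1$-Lipschitz. It therefore does not increase the Gagliardo seminorm, it keeps $\na_H u^-=-\na_H u\,\chi_{\{u<0\}}$, and it preserves approximability by $C_c^\infty(\Om)$ functions.

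\textbf{Right-hand side.} Both terms on the right are nonnegative. Indeed $(u^++\e)^{-\gamma(x)}u^-\ge 0$, and $(u^+)^\beta u^-=0$ a.e. since $u^+u^-=0$.

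\textbf{Local term.} We have
\[
\mc A(u,u^-)=\int_\Om |\na_H u|^{p-2}\na_H u\cdot\na_H u^-\,dx=-\int_{\{u<0\}}|\na_H u|^p\,dx=-[u^-]_{1,p}^p .
\]

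\textbf{Nonlocal term.} This is the main point. We claim pointwise that
\[
J_p(u(x)-u(y))\,(u^-(x)-u^-(y))\le -|u^-(x)-u^-(y)|^p .
\]
Write $u=u^+-u^-$ and check the cases.
\begin{itemize}
\item If $u(x),u(y)\ge0$, both sides vanish.
\item If $u(x),u(y)<0$, then $u^-=-u$ at both points and equality holds.
\item If $u(x)\ge0>u(y)$, the left side equals $-(u(x)+u^-(y))^{p-1}u^-(y)\le -(u^-(y))^p$. The symmetric case is analogous.
\end{itemize}
Integrating against $d\nu$ gives $\mc B(u,u^-)\le -[u^-]_{s,p}^p$.

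Combining the three steps,
\[
-\|u^-\|_{X_\alpha}^p\ \ge\ \alpha\,\mc A(u,u^-)+\mc B(u,u^-)\ =\ \mu\int_\Om (u^++\e)^{-\gamma(x)}u^-\,dx+0\ \ge\ 0 .
\]
Hence $\|u^-\|_{X_\alpha}=0$, so $u^-=0$ and $u\ge0$ a.e. in $\Om$ (and $u=0$ outside $\Om$). The only step needing care is the elementary sign inequality for the nonlocal term and the membership $u^-\in X_\alpha$; neither poses a real obstacle.
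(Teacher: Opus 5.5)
Your proof is correct and takes the same route as the paper. The paper tests $(P_{\mu,\alpha,\e})$ with the negative part $\min\{u,0\}$ and invokes nonnegativity of the right-hand side in one line; you supply what it omits, namely the identity $\mathcal{A}(u,u^-)=-[u^-]_{1,p}^p$ and the pointwise inequality $J_p(u(x)-u(y))(u^-(x)-u^-(y))\le -|u^-(x)-u^-(y)|^p$. One further point in your favour: the primitive in $I_{\mu,\alpha,\e}$ has zero slope for $u<0$, so differentiating it literally puts a factor $\chi_{\{u>0\}}$ on the singular term. Your argument is unaffected by this, since the right-hand side tested against $u^-\ge 0$ stays nonnegative in either reading.
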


\begin{proof}
We test the equation $(P_{\mu,\alpha,\e})$ with $(\zeta_\e(x))_{-}=\min\{\zeta_\e(x),0\}$ and $(\nu_\e(x))_-=\min\{\nu_\e(x),0\}$, respectively. 
Since the right-hand side of $(P_{\mu,\alpha,\e})$ is nonnegative, it follows easily that 
$\zeta_\e \ge 0$ and $\nu_\e \ge 0$ in $\Om$.
\end{proof}

\begin{lemma}\label{apriori}
There exists a constant $\Theta>0$, independent of $\e$, such that
$\|v_\e\| \le \Theta$, where $v_\e=\zeta_\e$ or $\nu_\e$.
\end{lemma}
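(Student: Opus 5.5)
For $\nu_\e$ the bound is immediate: by construction in Remark \ref{mprmk2} it is a minimizer of $I_{\mu,\alpha,\e}$ over the ball $\{\|v\|\le R\}$. Hence $\|\nu_\e\|\le R$, and $R$ from Lemma \ref{MP-geo} depends only on $k,\beta,p,|\Om|$ and the embedding constant $C$, not on $\e$. So the real work concerns the mountain pass critical point $\zeta_\e$. For it I will combine an $\e$-independent upper bound on the critical level with the Ambrosetti--Rabinowitz-type identity already used in Proposition \ref{PS-cond}, this time keeping track of the $\e$-dependence.

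The first step is to bound the critical level uniformly in $\e$. Take the admissible path $\theta(t)=tTe_{1,\alpha}\in\Gamma$. Since the singular and power terms are nonnegative for $v\ge 0$, we get
\[
I_{\mu,\alpha,\e}(\zeta_\e)\le \max_{t\in[0,1]} I_{\mu,\alpha,\e}(tTe_{1,\alpha})\le \max_{t\in[0,1]}\frac{t^pT^p\|e_{1,\alpha}\|^p}{p}=:M .
\]
Here $T$ and $e_{1,\alpha}$ are chosen in Lemma \ref{MP-geo} through $I_{0,\alpha,\e}$, and $I_{0,\alpha,\e}$ does not depend on $\e$. Hence $M$ is independent of $\e$.

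The second step uses $I'_{\mu,\alpha,\e}(\zeta_\e)=0$, with $\zeta_\e\ge0$ by Lemma \ref{non-negative}. This gives
\[
M\ge I_{\mu,\alpha,\e}(\zeta_\e)-\frac{1}{\beta+1}I'_{\mu,\alpha,\e}(\zeta_\e)\zeta_\e
=\Big(\frac1p-\frac1{\beta+1}\Big)\|\zeta_\e\|^p-\mu\int_\Om\frac{(\zeta_\e+\e)^{1-\gamma}-\e^{1-\gamma}}{1-\gamma}\,dx+\frac{\mu}{\beta+1}\int_\Om\frac{\zeta_\e}{(\zeta_\e+\e)^{\gamma}}\,dx .
\]
The last integral is nonnegative, so it can simply be dropped. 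This is the point where the proof of Proposition \ref{PS-cond} picked up an $\e^{-1}$ factor, and dropping the term avoids it. The middle term I bound using \eqref{knownfact} together with the splitting in \eqref{PS2-new}, which gives
\[
\int_\Om\frac{(\zeta_\e)^{1-\gamma}}{1-\gamma}\,dx\le C\big(\|\zeta_\e\|+\|\zeta_\e\|^{1-\|\gamma\|_\infty}\big),
\]
with $C$ depending only on $\|\gamma\|_\infty$, $|\Om|$ and the embedding constant of Lemma \ref{lemb}.

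Putting these together,
\[
\Big(\frac1p-\frac1{\beta+1}\Big)\|\zeta_\e\|^p\le M+\mu C\big(\|\zeta_\e\|+\|\zeta_\e\|^{1-\|\gamma\|_\infty}\big).
\]
Since $\beta+1>p$, the coefficient on the left is positive, and since $p>1>1-\|\gamma\|_\infty$, the left side has the higher power. This forces $\|\zeta_\e\|\le\Theta$ for some $\Theta$ depending only on $p,\beta,\gamma,|\Om|,\Lambda,T$, and in particular not on $\e$. Enlarging $\Theta$ to be at least $R$ covers $\nu_\e$ as well.

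The only delicate step is making sure no hidden $\e$-dependence remains. This means confirming that the sign of the term $\int_\Om\zeta_\e(\zeta_\e+\e)^{-\gamma}\,dx$ allows it to be discarded, and that the level bound $M$ is genuinely $\e$-free. Both checks reduce to the nonnegativity of $\zeta_\e$ and the fact that $T$ was fixed using $I_{0,\alpha,\e}$.
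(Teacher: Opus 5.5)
Your proposal is correct and follows essentially the same route as the paper: the bound for $\nu_\e$ comes from $\|\nu_\e\|\le R$, and for $\zeta_\e$ you bound the mountain pass level by an $\e$-independent constant via the path $t\mapsto tTe_{1,\alpha}$, subtract $\frac{1}{\beta+1}I'_{\mu,\alpha,\e}(\zeta_\e)\zeta_\e=0$, discard the nonnegative term $\frac{\mu}{\beta+1}\int_\Om \zeta_\e(\zeta_\e+\e)^{-\gamma(x)}\,dx$, and control the remaining singular term via \eqref{knownfact} and \eqref{PS2-new}. The only cosmetic difference is that you use the cruder level bound $T^p\|e_{1,\alpha}\|^p/p$ in place of the paper's $A=\max_{t\in[0,1]}I_{0,\alpha,\e}(tTe_{1,\alpha})$, which is equally independent of $\e$.
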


\begin{proof}
We prove the result only when $\alpha=1$, since the case $\alpha=0$ is similar. Indeed, the result is immediate when $v_\e=\nu_\e$. Hence we consider the case
$v_\e=\zeta_\e$. Recalling the notation from Lemma \ref{MP-geo}, define
\[
A:=\max_{t\in[0,1]} I_{0,\alpha,\e}(tTe_{1,\alpha}).
\]
Then
\[
A \ge \max_{t\in[0,1]} I_{\mu,\alpha,\e}(t\,Te_{1,\alpha})
\ge \inf_{\gamma\in\Gamma}\max_{t\in[0,1]} I_{\mu,\alpha,\e}(\gamma(t))
= I_{\mu,\alpha,\e}(\zeta_\e)
\ge \rho>0>I_{\mu,\alpha,\e}(\nu_\e).
\]
Consequently,
\begin{equation}\label{ap1}
\frac{1}{p}\|\zeta_\e\|^p
-\mu\int_\Om
\frac{(\zeta_\e+\e)^{1-\gamma(x)}-\e^{1-\gamma(x)}}{1-\gamma(x)}\,dx
-\frac{1}{\beta+1}\int_\Om \zeta_\e^{\beta+1}\,dx
\le A .
\end{equation}

Next, choosing $\phi=-\frac{\zeta_\e}{\beta+1}$ as a test function in $(P_{\mu,\alpha,\e})$,
we obtain
\begin{equation}\label{ap2}
-\frac{1}{\beta+1}\|\zeta_\e\|^p
+\frac{\mu}{\beta+1}\int_{\Om}\frac{\zeta_\e}{(\zeta_\e+\e)^{\gamma(x)}}\,dx
+\frac{1}{\beta+1}\int_{\Om}\zeta_\e^{\beta+1}\,dx
=0 .
\end{equation}

Adding \eqref{ap1} and \eqref{ap2}, we obtain
\begin{align*}
\left(\frac{1}{p}-\frac{1}{\beta+1}\right)\|\zeta_\e\|^p
&\le
\mu\int_\Om
\frac{(\zeta_\e+\e)^{1-\gamma(x)}-\e^{1-\gamma(x)}}{1-\gamma(x)}\,dx
-\frac{\mu}{\beta+1}\int_{\Om}\frac{\zeta_\e}{(\zeta_\e+\e)^{\gamma(x)}}\,dx
+A \\
&\le
\mu\int_\Om
\frac{(\zeta_\e+\e)^{1-\gamma(x)}-\e^{1-\gamma(x)}}{1-\gamma(x)}\,dx
+A \\
&\le
C\big(\|\zeta_\e\|+\|\zeta_\e\|^{1-\|\gamma\|_{\infty}}\big)+A,
\end{align*}
for some positive constant $C$ independent of $\e$, where the last
estimate follows from \eqref{PS2-new}, H\"older's inequality, and
Lemma \ref{lemb}. Since $\beta+1>p$, it follows that the sequence
$\{\zeta_\e\}_{\e>0}$ is uniformly bounded in $X_\alpha$ with respect to $\e$.
This completes the proof.
\end{proof}

As a consequence of Lemma \ref{non-negative} and Lemma \ref{apriori}, we obtain,
up to a subsequence, that $\zeta_\e \rightharpoonup \zeta_0$ and 
$\nu_\e \rightharpoonup \nu_0$ weakly in $X_\alpha$ as $\e \to 0^+$, for some
nonnegative functions $\zeta_0,\nu_0 \in X_\alpha$.

\begin{Remark}\label{2.24}
In the sequel, we show that $\zeta_0 \neq \nu_0$ and that they constitute
weak solutions of problem \eqref{meqn}. For convenience, we denote by
$v_0$ either $\zeta_0$ or $\nu_0$ and also recall that $v_\e=\zeta_\e$ or $\nu_\e$. To this end, to deal with the case $\alpha=1$, we require the following gradient convergence result (Lemma \ref{gradcgt}), which follows the lines of the proof of \cite[Theorem A.1]{GUnon} and \cite[Lemma 9]{Baldasfcaa}.
\end{Remark}

\begin{lemma}(Gradient convergence theorem)\label{gradcgt}
Let $\alpha=1$ and $v_0\in X_\alpha$ be as in Remark \ref{2.24}. Assume further that for every 
$\omega \Subset \Omega$, there exists a constant $c=c(\omega)>0$, such that
$
v_\e \ge c(\omega) \, \text{ in } \omega \, \text{ for all } \e .
$
Then, up to a subsequence,
$
\nabla_H\,v_\e \to \nabla_H\,v_0\text{ pointwise almost everywhere in } \Omega.
$
\end{lemma}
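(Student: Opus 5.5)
The plan is the Boccardo--Murat truncation argument, localized to compact subsets and adapted to the horizontal gradient and the nonlocal term. First I record the available information. By Lemma \ref{apriori}, $\|v_\e\|\le\Theta$, so up to a subsequence $v_\e\rightharpoonup v_0$ weakly in $X_1$, strongly in $L^r(\Om)$ for $r<p^*$ (Lemma \ref{lemb}), and pointwise a.e. The right-hand side $f_\e:=\mu(v_\e+\e)^{-\gamma(x)}+v_\e^\beta$ is controlled on compact sets: on $\omega\Subset\Om$ the hypothesis $v_\e\ge c(\omega)$ gives $\mu(v_\e+\e)^{-\gamma(x)}\le\mu\max\{1,c(\omega)^{-\|\gamma\|_\infty}\}$. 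In addition, $v_\e^\beta$ is bounded in $L^{(p^*/\beta)}(\Om)\subset L^{(p^*)'}$, since $\beta<p^*-1$. Hence $f_\e$ is bounded in $L^1(\omega)$, uniformly in $\e$.

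Next, fix $\omega\Subset\Om$ and a cutoff $\psi\in C_c^\infty(\Om)$ with $0\le\psi\le1$ and $\psi=1$ on $\omega$. For $\delta>0$ let $T_\delta(t)=\max\{-\delta,\min\{t,\delta\}\}$. I would test $(P_{\mu,1,\e})$ with $\phi_\e=\psi\,T_\delta(v_\e-v_0)\in X_1$ and write
\[
\int_\Om \psi\big(|\na_H v_\e|^{p-2}\na_H v_\e-|\na_H v_0|^{p-2}\na_H v_0\big)\cdot\na_H T_\delta(v_\e-v_0)\,dx
\]
as the sum of three groups of terms:
\begin{itemize}
\item a right-hand side term $\int f_\e\phi_\e\le \delta\int_{\operatorname{supp}\psi}f_\e\le C\delta$, using the $L^1$ bound from the first step;
\item a cutoff term $-\int T_\delta(v_\e-v_0)|\na_H v_\e|^{p-2}\na_H v_\e\cdot\na_H\psi$, which tends to $0$ because $T_\delta(v_\e-v_0)\to0$ strongly in $L^p$ while $|\na_H v_\e|^{p-1}$ is bounded in $L^{p'}$;
\item a term $-\int\psi|\na_H v_0|^{p-2}\na_H v_0\cdot\na_H T_\delta(v_\e-v_0)$, which tends to $0$ by weak convergence $\na_H T_\delta(v_\e-v_0)\rightharpoonup0$ in $L^p$.
\end{itemize}

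The nonlocal term $-\mathcal B(v_\e,\phi_\e)$ is the main obstacle. Its sign is not controlled, so I would bound it instead. Split $\phi_\e(x)-\phi_\e(y)=\psi(x)\big(T_\delta(w_\e)(x)-T_\delta(w_\e)(y)\big)+T_\delta(w_\e)(y)(\psi(x)-\psi(y))$, where $w_\e=v_\e-v_0$. For the second piece, Hölder's inequality gives the bound $C[v_\e]_{s,p}^{p-1}\big(\int\!\!\int|T_\delta(w_\e)(y)|^p|\psi(x)-\psi(y)|^p d\nu\big)^{1/p}$. Since $\int|\psi(x)-\psi(y)|^p|y^{-1}\circ x|_K^{-Q-sp}dx\le C$ uniformly in $y$, this is bounded by $C\|T_\delta(w_\e)\|_{L^p}\to0$. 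For the first piece, since $\|\cdot\|$ dominates $[\cdot]_{s,p}$ and $T_\delta$ is $1$-Lipschitz, $[T_\delta(w_\e)]_{s,p}\le[w_\e]_{s,p}\le 2\Theta$, and Hölder gives the bound $C\Theta^{p-1}\cdot 2\Theta$. This bound is not small. Two routes follow: either accept a bound independent of $\delta$ and instead use the Baldassarri-type refinement of \cite{Baldasfcaa}, subtracting $\mathcal B(v_0,\phi_\e)\to0$ and using monotonicity of $J_p$ on the symmetric part, or absorb the term. Concretely, I would write $J_p(V_\e)=\big(J_p(V_\e)-J_p(V_0)\big)+J_p(V_0)$. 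The $J_p(V_0)$ part tends to $0$, because $\big(\psi(x)(T_\delta w_\e(x)-T_\delta w_\e(y))\big)|y^{-1}\circ x|_K^{-(Q+sp)/p}\rightharpoonup0$ in $L^p$. For the monotone part, symmetrize in $x,y$ and use that $T_\delta$ is nondecreasing, so $(J_p(V_\e)-J_p(V_0))(T_\delta w_\e(x)-T_\delta w_\e(y))\ge0$. The leftover cross term contains $\psi(x)-\psi(y)$ and is handled like the second piece. I expect this to be the delicate point; the fallback is simply to carry the bound as $o(1)+C\delta^{\theta}$ via interpolation.

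Having shown $\limsup_{\e}\int_\omega(\text{monotone integrand})\,\chi_{\{|w_\e|<\delta\}}\le C\delta$, I finish as in Boccardo--Murat. Set $e_\e=\big(|\na_H v_\e|^{p-2}\na_H v_\e-|\na_H v_0|^{p-2}\na_H v_0\big)\cdot\na_H(v_\e-v_0)\ge0$ (Lemma \ref{alg}). For $\theta\in(0,1)$, Hölder's inequality gives $\int_\omega e_\e^\theta\le |\omega|^{1-\theta}\big(\int_{\{|w_\e|<\delta\}\cap\omega}e_\e\big)^\theta+\big(\int_\omega e_\e\big)^\theta|\{|w_\e|\ge\delta\}\cap\omega|^{1-\theta}$. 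The first term is $\le C\delta^\theta$, and the second tends to $0$ since $e_\e$ is bounded in $L^1$ and $w_\e\to0$ in measure. Letting $\e\to0$ and then $\delta\to0$ gives $e_\e^\theta\to0$ in $L^1(\omega)$, so $e_\e\to0$ a.e. along a subsequence. Strict monotonicity from Lemma \ref{alg} then yields $\na_H v_\e\to\na_H v_0$ a.e. in $\omega$. Exhausting $\Om$ by countably many $\omega$ and taking a diagonal subsequence concludes the proof.
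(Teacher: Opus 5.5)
Your proposal is correct. It is essentially the argument the paper invokes by reference in Remark~\ref{2.24}: a localized Boccardo--Murat truncation with test function $\psi\,T_\delta(v_\e-v_0)$. In it, the singular term is controlled in $L^1_{\mathrm{loc}}$ through the uniform lower bound $v_\e\ge c(\omega)$, and the nonlocal term is bounded below by $o(1)$ via $(J_p(V_\e)-J_p(V_0))(T_\delta w_\e(x)-T_\delta w_\e(y))\ge 0$ together with weak convergence; the $e_\e^\theta$ argument then concludes.

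The route you label ``concretely'' already closes the nonlocal estimate: with your nonsymmetric split, $\psi(x)\ge 0$ makes the monotone part nonnegative without any symmetrization. So the $\delta$-independent bound and the interpolation fallback can simply be dropped.
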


\begin{lemma}\label{Solution}
The function $v_0\in X_\alpha$ is a weak solution of problem \eqref{meqn}.
\end{lemma}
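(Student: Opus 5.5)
The plan has three steps: get a lower bound on $v_\e$ that does not depend on $\e$, pass to the limit $\e\to0^+$ in the weak formulation of $(P_{\mu,\alpha,\e})$ tested against $\phi\in C_c^\infty(\Omega)$, and check that $v_0$ meets every requirement of Definition \ref{def2}.

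\textbf{Step 1: uniform interior positivity.} We first prove that for every $\omega\Subset\Omega$ there is $c(\omega)>0$, independent of $\e$, with $v_\e\ge c(\omega)$ in $\omega$.

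Since $v_\e\ge0$ by Lemma \ref{non-negative}, we may take $\e\in(0,1)$. Then
\[
\frac{\mu}{(v_\e+\e)^{\gamma(x)}}\ge \frac{\mu}{(v_\e+1)^{\gamma(x)}}.
\]
This lower bound is still not $\e$-independent, because it depends on $v_\e$. To remove that dependence, let $w\in X_\alpha\cap L^\infty(\Omega)$ be the unique solution given by Lemma \ref{auxresult} of
\[
\mathcal M_\alpha w=\mu\,(w+1)^{-\gamma(x)},
\]
constructed by a Schauder fixed point or monotone iteration based on that lemma. We then show $v_\e\ge w$ by a weak comparison argument. Test the difference of the two equations with $(w-v_\e)^+$. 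The map $t\mapsto (t+\e)^{-\gamma}$ is decreasing, and $v_\e^\beta\ge0$. So on the set $\{w>v_\e\}$ the right-hand sides satisfy
\[
\mu(w+1)^{-\gamma}-\mu(v_\e+\e)^{-\gamma}-v_\e^\beta\le0.
\]
By the monotonicity in Lemma \ref{alg}, this forces $(w-v_\e)^+=0$.

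A simpler alternative avoids constructing $w$. Let $z$ be the solution of Lemma \ref{auxresult} with $g=\mu(\|v_\e\|_\infty+1)^{-\gamma}$. This requires a uniform $L^\infty$ bound on $v_\e$, which I would obtain by Stampacchia truncation as in Theorem \ref{regthm}(a), using the uniform bound of Lemma \ref{apriori}. If the comparison above goes through, the constructed $w$ is the cleaner route. Either way, Lemma \ref{auxresult} supplies $c(\omega)$.

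\textbf{Step 2: passing to the limit.} Fix $\phi\in C_c^\infty(\Omega)$ with support in some $\omega\Subset\Omega$. The $\e$-equation reads
\[
\alpha\mathcal A(v_\e,\phi)+\mathcal B(v_\e,\phi)=\mu\int_\Omega (v_\e+\e)^{-\gamma(x)}\phi\,dx+\int_\Omega v_\e^\beta\phi\,dx .
\]
We treat each term in turn.
\begin{itemize}
\item \emph{Singular term.} On $\omega$ we have $(v_\e+\e)^{-\gamma}|\phi|\le c(\omega)^{-\gamma(x)}\|\phi\|_\infty$, which is bounded. Compact embedding (Lemma \ref{lemb}) gives a.e. convergence $v_\e\to v_0$, so dominated convergence applies.
\item \emph{Power term.} It converges by Vitali's theorem, since $\beta+1<p^*$ (respectively $p_s^*$) and $(v_\e)$ is bounded in $X_\alpha$.
\item \emph{Nonlocal term.} The family $J_p(v_\e(x)-v_\e(y))\,|y^{-1}\circ x|_K^{-(Q+sp)/p'}$ is bounded in $L^{p'}(\mathbb H^{2n})$ and converges a.e. Hence it converges weakly to the corresponding expression for $v_0$. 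Pairing against $(\phi(x)-\phi(y))\,|y^{-1}\circ x|_K^{-(Q+sp)/p}\in L^p$ gives convergence.
\item \emph{Local term ($\alpha=1$).} Step 1 verifies the hypothesis of Lemma \ref{gradcgt}, so $\nabla_H v_\e\to\nabla_H v_0$ a.e. The family $|\nabla_H v_\e|^{p-2}\nabla_H v_\e$ is bounded in $L^{p'}$, so it converges weakly in $L^{p'}$ to $|\nabla_H v_0|^{p-2}\nabla_H v_0$, and the local term passes to the limit.
\end{itemize}

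\textbf{Step 3: conclusion.} Letting $\e\to0$ in the lower bound of Step 1 gives $v_0\ge c(\omega)$ in $\omega$. This positivity, together with the limiting identity from Step 2, is exactly Definition \ref{def2}.

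The main obstacle is Step 1, the $\e$-uniform positivity. It is needed both to dominate the singular term and to invoke Lemma \ref{gradcgt}, and it relies on a comparison principle for $\mathcal M_\alpha$ with a nonlinear decreasing right-hand side.
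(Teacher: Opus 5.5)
Your argument is correct, and Steps 2 and 3 are essentially the paper's: dominated convergence for the singular term on the support of $\phi$, compactness for $v_\e^\beta$, Lemma \ref{gradcgt} plus $L^{p'}$-boundedness for the local term, and weak $L^{p'}$ convergence for the nonlocal term.

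The genuine difference is in Step 1. You discard $v_\e^\beta$, and then $\mu(v_\e+1)^{-\gamma(x)}$ indeed has no $\e$-uniform positive lower bound. That pushes you to an auxiliary nonlinear problem or to a uniform $L^\infty$ estimate. The paper instead keeps the power term. For $t\ge0$ and $\e\in(0,1)$,
\[
\frac{\mu}{(t+\e)^{\gamma(x)}}+t^\beta\ \ge\ \frac{\mu}{(t+1)^{\gamma(x)}}+t^\beta\ \ge\ \min\Big\{1,\frac{\mu}{2}\Big\}=:C,
\]
since $t^\beta\ge1$ for $t\ge1$ and $(t+1)^{\gamma(x)}<2$ for $t<1$. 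Hence $\mathcal M_\alpha v_\e\ge C$. Comparing with the solution $\xi$ of $\mathcal M_\alpha\xi=C$, which Lemma \ref{auxresult} supplies directly for this constant datum, gives $v_\e\ge\xi\ge c(\omega)$ in $\omega$. This is shorter and needs nothing beyond Lemma \ref{auxresult}.

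Your route through the auxiliary solution $w$ is also valid. The comparison you wrote is right, because $w+1>v_\e+\e$ on $\{w>v_\e\}$. However, Lemma \ref{auxresult} only handles a fixed datum, so you still owe an existence proof for $w$. For instance, minimize $\frac1p\|w\|^p-\mu\int_\Om\int_0^{w(x)}(\tau^++1)^{-\gamma(x)}\,d\tau\,dx$, which is coercive and weakly lower semicontinuous because the potential is bounded by $|t|$. Testing the resulting equation with $w_-$ gives $w\ge0$. Then the uniqueness part of Lemma \ref{auxresult}, applied with $g=\mu(w+1)^{-\gamma(x)}\in L^\infty(\Om)$ and $0<g\le\mu$, transfers the interior lower bound to $w$.

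In exchange, your route uses only the singular term. It would therefore survive replacing $u^\beta$ by any nonnegative perturbation, whereas the paper's constant bound relies on $t^\beta\ge1$ for $t\ge1$.

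Your alternative via a uniform $L^\infty$ bound is the costliest option. Such a bound is not in the paper. It would require redoing the truncation argument of Theorem \ref{regthm}(a) with the $\e$-independent bound $\Theta$ of Lemma \ref{apriori} in place of the normalization $\|u\|_{L^q(\Om)}=1$.
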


\begin{proof}
We prove the result for $\alpha=1$ only, since the proof is similar when $\alpha=0$. To this end, first we observe that for any $\e\in(0,1)$ and $t\ge 0$,
\[
\frac{\mu}{(t+\e)^{\gamma(x)}}+t^\beta
\ge \frac{\mu}{(t+1)^{\gamma(x)}}+t^\beta
\ge \min\left\{1,\frac{\mu}{2}\right\}.
\]
Consequently,
\[
\mathcal{M}_\alpha\,v_\e
=\frac{\mu}{(v_\e+\e)^{\gamma(x)}}+v_\e^\beta
\ge \min\left\{1,\frac{\mu}{2}\right\}
=:C .
\]

By Lemma \ref{auxresult}, there exists a solution $\xi\in X_\alpha\cap L^\infty(\Omega)$ of
\[
\mathcal{M}_\alpha\,\xi=C \quad \text{in }\Om, \qquad \xi>0 \quad \text{in }\Om
\]
such that for every $\omega\Subset\Omega$, there exists a constant $c(\omega)>0$ with $\xi\geq c(\omega)$ in $\omega$.
Then, 
\begin{equation}\label{strict positivity}
\begin{split}
&\int_{\Om}|\na_H\,v_\e|^{p-2}\na_H\,\xi\,\na_H\,\phi\,dx+
\int_{\mb H^{n}}\int_{\mb H^{n}}
J_p(v_{\e}(x)-v_{\e}(y))(\phi(x)-\phi(y))\,d\nu\\
&\ge
\int_{\Om}|\na_H\,\xi|^{p-2}\na_H\,\xi\,\na_H\,\phi\,dx+
\int_{\mb H^{n}}\int_{\mb H^{n}}
J_p(\xi(x)-\xi(y))(\phi(x)-\phi(y))\,d\nu
\end{split}
\end{equation}
for every nonnegative $\phi\in X_\alpha$. Choosing $\phi=(\xi-v_\e)^+\in X_\alpha$ as a test function in \eqref{strict positivity}
and using Lemma \ref{alg}, we deduce that
$
v_\e \ge \xi \text{ in }\Om .
$
Furthermore using the property of $\xi$ above,
there exists a constant $c=c(K)>0$ such that
\begin{equation}\label{uniform}
v_\e \ge c(K)>0 \qquad \text{for every } K\Subset\Om .
\end{equation}
Passing to the limit, we obtain $v_0\ge c(K)>0$ for every $K\Subset\Om$,
and hence $v_0>0$ in $\Om$.

Moreover, we observe that
\[
0\le \left|\frac{\mu\,\phi}{(v_\e+\e)^{\gamma(x)}}\right|
\le \mu\|\phi\,c(K)^{-\gamma(x)}\|_{\infty},
\qquad \forall\,\phi\in C_c^\infty(\Om).
\]
Therefore, by the Lebesgue's dominated convergence theorem,
\begin{equation}\label{limpass1}
\lim_{\e\to0^+}
\int_{\Om}\frac{\mu}{(v_\e+\e)^{\gamma(x)}}\phi\,dx
=
\int_{\Om}\frac{\mu}{v_0^{\gamma(x)}}\phi\,dx .
\end{equation}

Next, by the compact embedding given in Lemma \ref{lemb}, we have
\begin{equation}\label{limpass2}
\lim_{\e\to 0^+}\int_{\Om}v^{\beta}\phi\,dx
=
\int_{\Om}v_0^{\beta}\phi\,dx,
\qquad \forall\,\phi\in C_c^\infty(\Om).
\end{equation}

Moreover, by Lemma \ref{gradcgt}, we have 
\[
|\nabla_H\, v_\e|^{p-2}\nabla_H\, v_\e \rightharpoonup |\nabla_H\, v_0|^{p-2}\nabla_H\, v_0 
\text{ weakly in } L^{p'}(\Omega;\mathbb{R}^n).
\]
Since $\nabla \phi \in L^p(\Omega;\mathbb{R}^n)$, it follows that
\begin{equation}\label{llim1}
\lim_{\e\to 0^+} \int_{\Omega} 
|\nabla_H\, v_\e|^{p-2}\nabla_H\, v_\e \nabla_H\, \phi \, dx
=
\int_{\Omega} 
|\nabla_H\, v_0|^{p-2}\nabla_H\, v_0 \nabla_H\, \phi \, dx .
\end{equation}
Finally, using the weak convergence of $v_\e$ to $v_0$ in $X_\alpha$, we obtain for every $\phi\in C_c^\infty(\Om)$, that
\begin{equation}\label{limpass3}
\begin{gathered}
\lim_{\e\to 0^+}
\int_{\mb H^{n}}\int_{\mb H^{n}}
{J_p(v_\e(x)-v_\e(y))(\phi(x)-\phi(y))}
\,d\nu 
=
\int_{\mb H^{n}}\int_{\mb H^{n}}
{J_p(v_0(x)-v_0(y))(\phi(x)-\phi(y))}
\,d\nu.
\end{gathered}
\end{equation}

Combining \eqref{limpass1}--\eqref{limpass3}, we conclude that
\begin{align*}
&\int_{\Omega} 
|\nabla_H\, v_0|^{p-2}\nabla_H\, v_0 \nabla_H\, \phi \, dx+\int_{\mb H^{n}}\int_{\mb H^{n}}
{J_p(v_0(x)-v_0(y))(\phi(x)-\phi(y))}
\,d\nu\\
&=
\mu\int_{\Om}{v_0^{-\gamma(x)}}{\phi}\,dx
+
\int_{\Om}v_0^{\beta}\phi\,dx,
\end{align*}
for all $\phi\in C_c^\infty(\Om)$. Hence $v_0$ is a weak solution of
\eqref{meqn}.
\end{proof}

\subsection{Proof of the multiplicity result}
\begin{proof}(Proof of Theorem \ref{thm2})
Let $\alpha\in\{0,1\}$ and suppose that $u\in X_\alpha$ is a weak solution of \eqref{meqn}. Then, along the similar lines of the proof of \cite[Lemma 14]{GKRmathann}, it follows that $u\in L^\infty(\Om)$.

Let $\mu^*=\Lambda$, where $\Lambda$ is found in Lemma \ref{MP-geo} above. For the existence of multiple weak  solutions of \eqref{meqn}, we proceed as follows. First, using Lemma \ref{Solution}, we deduce that $\zeta_0$ and $\nu_0$ are two
positive weak solutions of \eqref{meqn} for $\mu\in(0,\Lambda)$.
We now show that $\zeta_0\neq \nu_0$.

Choosing $\phi=v_\e\in X_\alpha$ as a test function in $(P_{\mu,\alpha,\e})$, we obtain
\[
\|v_\e\|^p
=
\mu\int_{\Om}\frac{v_\e}{(v_\e+\e)^{\gamma(x)}}\,dx
+
\int_{\Om}v_\e^{\beta+1}\,dx .
\]

For $\alpha=1$, since $\beta+1<p^{*}$, it follows from Lemma \ref{lemb} that 
\[
\lim_{\e\to0^+}\int_{\Om}v_\e^{\beta+1}\,dx
=
\int_{\Om}v_0^{\beta+1}\,dx .
\]
Similarly, for $\alpha=0$, since $\beta+1<p_s^{*}$, by Lemma \ref{lemb}, the above estimate holds.

Moreover, observing that
\[
0\le \frac{v_\e}{(v_\e+\e)^{\gamma(x)}}\le v_\e^{1-\gamma(x)},
\]
and using \eqref{PS2-new} together with the Vitali's convergence theorem, we obtain
\[
\mu\lim_{\e\to0^+}\int_{\Om}\frac{v_\e}{(v_\e+\e)^{\gamma(x)}}\,dx
=
\mu\int_{\Om}v_0^{1-\gamma(x)}\,dx .
\]

Consequently,
\[
\lim_{\e\to0^+}
\|v_\e\|^p
=
\mu\int_{\Om}v_0^{1-\gamma(x)}\,dx
+
\int_{\Om}v_0^{\beta+1}\,dx.
\]

Using Lemma \ref{tlem}, choosing $\phi=v_0$ as a test function in
\eqref{meqn}, we obtain 
\[
\|v_0\|^p
=
\mu\int_{\Om}v_0^{1-\gamma(x)}\,dx
+
\int_{\Om}v_0^{\beta+1}\,dx.
\]

Therefore,
\[
\lim_{\e\to0^+}\|v_\e\|^p=\|v_0\|^p,
\]
which implies that $v_\e\to v_0$ strongly in $X_\alpha$.

Furthermore, by the Vitali's convergence theorem,
\[
\lim_{\e\to0^+}\int_{\Om}
\left[(v_\e+\e)^{1-\gamma(x)}-\e^{1-\gamma(x)}\right]\,dx
=
\int_{\Om}v_0^{1-\gamma(x)}\,dx .
\]

Combining this with the strong convergence of $v_\e$, we deduce that
\[
\lim_{\e\to0^+} I_{\mu,\alpha,\e}(v_\e)=I_{\mu,\alpha}(v_0).
\]
Hence, from \eqref{limit-pass}, it follows that $\zeta_0\neq \nu_0$. This completes the proof. \qed
\end{proof}

\section*{Perspectives and open problems}
 The following perspectives range from the technically concrete (uniqueness, critical exponents, variable order) to the conceptually expansive (control theory, numerical analysis, systems). Each represents a direction in which the present paper's framework could be extended, challenged, or enriched.

\subsection*{A new spectral landscape in sub-Riemannian geometry}
The paper opens a genuinely new chapter in the spectral theory of subelliptic operators. While the Euclidean theory of problems has begun to mature, the Heisenberg group setting introduces geometric obstructions, such as the absence of a global dilation structure compatible with the horizontal gradient, the anisotropy of the Kor\'anyi metric, and the noncommutativity of the group law, which render the Euclidean techniques insufficient. A natural perspective is to ask whether the inverse iteration scheme developed here can be adapted to more general stratified Lie groups (Carnot groups or M\'etivier groups), where the horizontal distribution is more intricate and the homogeneous dimension no longer determines the Sobolev exponents in the same way.

\subsection*{Beyond the sub-Laplacian: degenerate and anisotropic extensions}
The operators studied in  this paper are built from the \(p\)-sub-Laplacian and its fractional counterpart. A compelling open problem is to replace the horizontal gradient by a general H\"ormander-type system of vector fields satisfying the bracket-generating condition, or to consider weighted sub-Laplacians with Muckenhoupt weights. In such settings, the compact embedding \(X_\alpha \hookrightarrow L^q(\Omega)\) may fail or require a different proof, and the Sobolev critical exponents become weight-dependent.

\subsection*{Variable-order fractional diffusion}
The fractional exponent \(s \in (0,1)\) is fixed throughout the paper. A natural and increasingly important extension is to allow \(s\) to vary in space, that is, to consider the variable-order fractional sub-Laplacian \((-\Delta_{H,p})^{s(x)}\). This introduces a new layer of nonlocality whose interaction with the variable singular exponent \(\gamma(x)\) is entirely unexplored. The iteration scheme used in this paper would need to be redesigned, since the definition of the fractional operator changes pointwise.

\subsection*{Critical and supercritical singular nonlinearities}
The paper assumes \(\beta + 1 < p^*\) (or \(p_s^*\)), that is, the nonsingular term is subcritical. The critical case \(\beta + 1 = p^*\) and the supercritical case \(\beta + 1 > p^*\) remain open. In the critical case, the compact embedding is lost, and one must contend with concentration phenomena; in the supercritical case, variational methods break down entirely and one must resort to topological or bifurcation-theoretic arguments. The interaction of such critical growth with the singular term \(u^{-\gamma(x)}\) is likely to produce new multiplicity patterns and possibly nonexistence thresholds.

\subsection*{Uniqueness and nondegeneracy of the first eigenfunction}
The paper establishes the existence of a positive first eigenfunction  and proves that it is bounded and strictly positive inside the domain. However, uniqueness of the first eigenvalue and the simplicity of the first eigenfunction are not addressed. In the Euclidean \(p\)-Laplacian case, these properties are classical but delicate. In the mixed local-nonlocal Heisenberg setting, these problems are entirely open. A related question is whether the first eigenvalue is isolated and whether the eigenfunctions exhibit symmetry or monotonicity properties with respect to the Kor\'anyi metric.

\subsection*{Asymptotic behavior as \(s \to 1^-\) and \(s \to 0^+\)}
The parameter \(s \in (0,1)\) interpolates between two extreme regimes. On the one hand, as \(s \to 1^-\), the fractional sub-Laplacian converges (in a suitable sense) to the local sub-Laplacian, and the mixed operator becomes purely local. On the other hand, as \(s \to 0^+\), the nonlocal term degenerates and the problem becomes singularly perturbed. A systematic study of the limiting behavior of eigenvalues and eigenfunctions in these regimes would clarify the role of nonlocality and could reveal spectral bifurcation phenomena that are invisible at fixed \(s\).

\subsection*{Parabolic and evolution analogues}
This paper is entirely stationary. A natural perspective is to consider the associated parabolic mixed local-nonlocal problem on the Heisenberg group, where the spectral information obtained here would govern the asymptotic stability of equilibria and the large-time behavior of solutions. The fractional heat kernel on the Heisenberg group is notoriously difficult, and the mixed local-nonlocal case would require new heat kernel estimates compatible with the horizontal structure.

\subsection*{Numerical and computational perspectives}
The variational structure of the problem lends itself to numerical approximation, but the Heisenberg group geometry poses substantial challenges: the domain is typically unbounded in the vertical direction, the fractional operator is nonlocal, and the singularity at the origin requires adaptive mesh refinement. A computational study of the multiplicity result (visualizing the two distinct solutions and their dependence on \(\mu\)) would provide valuable intuition and could suggest new theoretical conjectures.

\subsection*{Connections to nonholonomic geometry and control theory}
A promising perspective is to interpret the eigenvalue problem as a spectral condition for controllability of a mixed local-nonlocal diffusion process on a sub-Riemannian manifold. The first eigenvalue, in particular, could serve as a threshold for stabilization or as a {measure of the cost of control in systems where diffusion is constrained to horizontal directions.

\subsection*{Generalization to systems and vector-valued problems}
The paper treats scalar equations. A natural extension is to systems of mixed local-nonlocal equations on the Heisenberg group, where the coupling between components introduces new spectral phenomena such as crossed eigenvalues and mode interactions. Such systems arise in models of multi-species diffusion, phase separation, and coupled oscillators on sub-Riemannian manifolds.

\section*{Acknowledgments}
Prashanta Garain acknowledges the financial support of the Anusandhan National Research
Foundation (ANRF), India under the ANRF Research Grant, File No. ANRF/ECRG/2024/000780/PMS. Vicen\c tiu D.~R\u adulescu was supported by the AGH University of
Krak\'{o}w under grant no. 16.16.420.054, funded by the Polish
Ministry of Science and Higher Education.

\end{document}